\definecolor{darkgreen}{rgb}{0,0.45,0}
\DeclareMathAlphabet{\mathbf}{OT1}{cmr}{b}{n}
\def\matrixobject@{
  \edef \next@{={\DirectionfromtheDirection@ }}%
  \expandafter \toks@ \next@ \plainxy@
  \let\xy@@ix@=\xyq@@toksix@
  \xyFN@ \OBJECT@}
\let\xy@entry@@norm=\entry@@norm
\def\entry@@norm@patched{%
  \let\object@=\matrixobject@
  \xy@entry@@norm }
\newcommand{\twosim}[2][0.5]{\ar@{}[#2] \save ?(#1)*{\simeq}\restore}
\newcommand{\twocong}[2][0.5]{\ar@{}[#2] \save ?(#1)*{\cong}\restore}
\newcommand{\twoeq}[2][0.5]{\ar@{}[#2] \save ?(#1)*{=}\restore}
\newcommand{\rtwocell}[3][0.5]{\ar@{}[#2] \ar@{=>}?(#1)+/l 0.2cm/;?(#1)+/r 0.2cm/^{#3}}
\newcommand{\rtwocello}[3][0.5]{\ar@{}[#2] \ar@{=>}?(#1)+/l 0.2cm/;?(#1)+/r 0.2cm/_{#3}}
\newcommand{\ltwocell}[3][0.5]{\ar@{}[#2] \ar@{=>}?(#1)+/r 0.2cm/;?(#1)+/l 0.2cm/^{#3}}
\newcommand{\ltwocello}[3][0.5]{\ar@{}[#2] \ar@{=>}?(#1)+/r 0.2cm/;?(#1)+/l 0.2cm/_{#3}}
\newcommand{\dtwocell}[3][0.5]{\ar@{}[#2] \ar@{=>}?(#1)+/u  0.2cm/;?(#1)+/d 0.2cm/^{#3}}
\newcommand{\dltwocell}[3][0.5]{\ar@{}[#2] \ar@{=>}?(#1)+/ur  0.2cm/;?(#1)+/dl 0.2cm/^{#3}}
\newcommand{\urtwocell}[3][0.5]{\ar@{}[#2] \ar@{=>}?(#1)+/dl  0.2cm/;?(#1)+/ur 0.2cm/^{#3}}
\newcommand{\drtwocell}[3][0.5]{\ar@{}[#2] \ar@{=>}?(#1)+/ul  0.2cm/;?(#1)+/dr 0.2cm/^{#3}}
\newcommand{\dthreecell}[3][0.5]{\ar@{}[#2] \ar@3{->}?(#1)+/u  0.2cm/;?(#1)+/d 0.2cm/^{#3}}
\newcommand{\utwocell}[3][0.5]{\ar@{}[#2] \ar@{=>}?(#1)+/d 0.2cm/;?(#1)+/u 0.2cm/_{#3}}
\newcommand{\dtwocelltarg}[3][0.5]{\ar@{}#2 \ar@{=>}?(#1)+/u  0.2cm/;?(#1)+/d 0.2cm/^{#3}}
\newcommand{\utwocelltarg}[3][0.5]{\ar@{}#2 \ar@{=>}?(#1)+/d  0.2cm/;?(#1)+/u 0.2cm/_{#3}}
\newcommand{\xdrtwocell}[3][0.5]{\ar@{}[#2] \ar@{=>}?(#1)+/ul  0.14cm/;?(#1)+/dr 0.14cm/^{#3}}
\newcommand{\xdtwocell}[3][0.5]{\ar@{}[#2] \ar@{=>}?(#1)+/u  0.14cm/;?(#1)+/d 0.14cm/^{#3}}
\newcommand{\pushoutcorner}[1][dr]{\save*!/#1+1.2pc/#1:(1,-1)@^{|-}\restore}
\newcommand{\pullbackcorner}[1][dr]{\save*!/#1-1.2pc/#1:(-1,1)@^{|-}\restore}
\newcommand{\sh}[2]{**{!/#1 -#2/}}
\DeclareMathOperator{\ob}{ob}
\DeclareMathOperator{\im}{im}
\newcommand{\cat}[1]{\mathbf{#1}}
\newcommand{\op}{\mathrm{op}}
\newcommand{\id}{\mathrm{id}}
\newcommand{\thg}{{\mathord{\text{--}}}}
\newcommand{\Ran}{\mathrm{Ran}}
\newcommand{\res}[2]{\left.{#1}\right|_{#2}}
\newcommand{\spn}[1]{{\langle{#1}\rangle}}
\newcommand{\defeq}{\mathrel{\mathop:}=}
\newcommand{\cd}[2][]{\vcenter{\hbox{\xymatrix#1{#2}}}}
\newcommand{\induced}{induced\xspace}
\renewcommand{\phi}{\varphi}
\newcommand{\A}{{\mathcal A}}
\newcommand{\B}{{\mathcal B}}
\newcommand{\C}{{\mathcal C}}
\newcommand{\D}{{\mathcal D}}
\newcommand{\E}{{\mathcal E}}
\newcommand{\G}{{\mathcal G}}
\newcommand{\I}{{\mathcal I}}
\newcommand{\J}{{\mathcal J}}
\newcommand{\M}{{\mathcal M}}
\newcommand{\N}{{\mathcal N}}
\renewcommand{\O}{{\mathcal O}}
\renewcommand{\P}{{\mathcal P}}
\let\sec=\S
\renewcommand{\S}{{\mathcal S}}
\newcommand{\T}{{\mathcal T}}
\newcommand{\V}{{\mathcal V}}
\newcommand{\X}{{\mathcal X}}
\newcommand{\xtor}[1]{\cdl[@1]{{} \ar[r]|-{\object@{|}}^{#1} & {}}}
\def\hookleftarrowfill@{\arrowfill@\leftarrow\relbar{\relbar\joinrel\rhook}}
\def\twoheadleftarrowfill@{\arrowfill@\twoheadleftarrow\relbar\relbar}
\def\leftbararrowfill@{\arrowdoublefill@{\leftarrow\mkern-5mu}\relbar\mapstochar\relbar\relbar}
\def\Leftbararrowfill@{\arrowdoublefill@{\Leftarrow\mkern-2mu}\Relbar\Mapstochar\Relbar\Relbar}
\def\leftringarrowfill@{\arrowdoublefill@{\leftarrow\mkern-3mu}\relbar{\mkern-3mu\circ\mkern-2mu}\relbar\relbar}
\def\lefttriarrowfill@{\arrowfill@{\mathrel\triangleleft\mkern0.5mu\joinrel\relbar}\relbar\relbar}
\def\Lefttriarrowfill@{\arrowfill@{\mathrel\triangleleft\mkern1mu\joinrel\Relbar}\Relbar\Relbar}
\def\hookrightarrowfill@{\arrowfill@{\lhook\joinrel\relbar}\relbar\rightarrow}
\def\twoheadrightarrowfill@{\arrowfill@\relbar\relbar\twoheadrightarrow}
\def\rightbararrowfill@{\arrowdoublefill@{\relbar\mkern-0.5mu}\relbar\mapstochar\relbar\rightarrow}
\def\Rightbararrowfill@{\arrowdoublefill@{\Relbar\mkern-2mu}\Relbar\Mapstochar\Relbar\Rightarrow}
\def\rightringarrowfill@{\arrowdoublefill@\relbar\relbar{\mkern-2mu\circ\mkern-3mu}\relbar{\mkern-3mu\rightarrow}}
\def\righttriarrowfill@{\arrowfill@\relbar\relbar{\relbar\joinrel\mkern0.5mu\mathrel\triangleright}}
\def\Righttriarrowfill@{\arrowfill@\Relbar\Relbar{\Relbar\joinrel\mkern1mu\mathrel\triangleright}}
\def\leftrightarrowfill@{\arrowfill@\leftarrow\relbar\rightarrow}
\def\mapstofill@{\arrowfill@{\mapstochar\relbar}\relbar\rightarrow}
\renewcommand*\xleftarrow[2][]{\ext@arrow 20{20}0\leftarrowfill@{#1}{#2}}
\providecommand*\xLeftarrow[2][]{\ext@arrow 60{22}0{\Leftarrowfill@}{#1}{#2}}
\providecommand*\xhookleftarrow[2][]{\ext@arrow 10{20}0\hookleftarrowfill@{#1}{#2}}
\providecommand*\xtwoheadleftarrow[2][]{\ext@arrow 60{20}0\twoheadleftarrowfill@{#1}{#2}}
\providecommand*\xleftbararrow[2][]{\ext@arrow 10{22}0\leftbararrowfill@{#1}{#2}}
\providecommand*\xLeftbararrow[2][]{\ext@arrow 50{24}0\Leftbararrowfill@{#1}{#2}}
\providecommand*\xleftringarrow[2][]{\ext@arrow 10{26}0\leftringarrowfill@{#1}{#2}}
\providecommand*\xlefttriarrow[2][]{\ext@arrow 80{24}0\lefttriarrowfill@{#1}{#2}}
\providecommand*\xLefttriarrow[2][]{\ext@arrow 80{24}0\Lefttriarrowfill@{#1}{#2}}
\renewcommand*\xrightarrow[2][]{\ext@arrow 01{20}0\rightarrowfill@{#1}{#2}}
\providecommand*\xRightarrow[2][]{\ext@arrow 04{22}0{\Rightarrowfill@}{#1}{#2}}
\providecommand*\xhookrightarrow[2][]{\ext@arrow 00{20}0\hookrightarrowfill@{#1}{#2}}
\providecommand*\xtwoheadrightarrow[2][]{\ext@arrow 03{20}0\twoheadrightarrowfill@{#1}{#2}}
\providecommand*\xrightbararrow[2][]{\ext@arrow 01{22}0\rightbararrowfill@{#1}{#2}}
\providecommand*\xRightbararrow[2][]{\ext@arrow 04{24}0\Rightbararrowfill@{#1}{#2}}
\providecommand*\xrightringarrow[2][]{\ext@arrow 01{26}0\rightringarrowfill@{#1}{#2}}
\providecommand*\xrighttriarrow[2][]{\ext@arrow 07{24}0\righttriarrowfill@{#1}{#2}}
\providecommand*\xRighttriarrow[2][]{\ext@arrow 07{24}0\Righttriarrowfill@{#1}{#2}}
\providecommand*\xmapsto[2][]{\ext@arrow 01{20}0\mapstofill@{#1}{#2}}
\providecommand*\xleftrightarrow[2][]{\ext@arrow 10{22}0\leftrightarrowfill@{#1}{#2}}
\providecommand*\xLeftrightarrow[2][]{\ext@arrow 10{27}0{\Leftrightarrowfill@}{#1}{#2}}
\numberwithin{equation}{section}
\theoremstyle{plain}
\newtheorem{Thm}{Theorem}
\newtheorem*{Thm*}{Theorem}
\newtheorem{Prop}[Thm]{Proposition}
\newtheorem{Cor}[Thm]{Corollary}
\newtheorem{Lemma}[Thm]{Lemma}
\theoremstyle{definition}
\newtheorem{Defn}[Thm]{Definition}
\newtheorem{Ex}[Thm]{Example}
\newtheorem{Exs}[Thm]{Examples}
\newtheorem{Rk}[Thm]{Remark}
\newtheorem{const}{Construction}
\newcommand{\Cat}{\cat{Cat}}
\newcommand{\Gpd}{\cat{Gpd}}
\newcommand{\Set}{\cat{Set}}
\newcommand{\CAT}{\cat{CAT}}
\newcommand{\Alg}[2][\E]{{#1}^{\mathsf{#2}}}
\newcommand{\Kl}[2][\E]{{#1}_{\mathsf{#2}}}
\newcommand{\atwo}{{\mathbf 2}}
\newcommand{\VCat}[1][\V]{{#1\text-\cat{CAT}}}
\newcommand{\Gph}{\cat{Grph}}
\newcommand{\KN}{K\text-\cat{Ner}}
\newcommand{\mnd}{\cat{Mnd}(\E)}
\newcommand{\eend}{\cat{End}(\E)}
\newcommand{\aend}{\A\text-\cat{End}(\E)}
\newcommand{\amnd}{\A\text-\cat{Mnd}(\E)}
\newcommand{\sig}{\cat{Sig}_\A(\E)}
\newcommand{\preth}{\cat{Preth}_\A(\E)}
\renewcommand{\th}{\cat{Th}_\A(\E)}
\newcommand{\nerv}{\cat{Mnd}_\A(\E)}
\newcommand{\conc}[1][\T]{\cat{Mod}_c(#1)}
\setlist{leftmargin=1.75\parindent,itemsep=0.25\baselineskip}
\begin{document}

\title{Monads and theories}
\subjclass[2000]{Primary:18C10, 18C20} \date{\today}
\author{John Bourke}
\address{Department of Mathematics and Statistics, Masaryk University, Kotl\'a\v rsk\'a 2, Brno 61137, Czech Republic}
\email{bourkej@math.muni.cz} 
\author{Richard Garner}
\address{Department of Mathematics, Macquarie University, NSW 2109, Australia}
\email{richard.garner@mq.edu.au}
\thanks{Both authors gratefully acknowledge the support of Australian Research
  Council Discovery Project DP160101519; the second author also
  acknowledges Australian Research Council Future Fellowship
  FT160100393.}
\maketitle

\begin{abstract}
  Given a locally presentable enriched category $\E$ together with a
  small dense full subcategory $\A$ of \emph{arities}, we study the
  relationship between monads on $\E$ and identity-on-objects functors
  out of $\A$, which we call $\A$-pretheories. We show that the
  natural constructions relating these two kinds of structure form an
  adjoint pair. The fixpoints of the adjunction are characterised on
  the one side as the \emph{$\A$-nervous monads}---those for which the
  conclusions of Weber's \emph{nerve theorem} hold---and on the other,
  as the \emph{$\A$-theories} which we introduce here.
  
  The resulting equivalence between $\A$-nervous monads and
  $\A$-theories is best possible in a precise sense, and extends
  almost all previously known monad--theory correspondences. It also
  establishes some completely new correspondences, including one which
  captures the globular theories defining Grothendieck weak
  $\omega$-groupoids.
  
  Besides establishing our general correspondence and illustrating its
  reach, we study good properties of $\A$-nervous monads and
  $\A$-theories that allow us to recognise and construct them with
  ease. We also compare them with the monads with arities and theories
  with arities introduced and studied by Berger, Melli\`es and Weber.
\end{abstract}

\section{Introduction}
\label{sec:introduction}

Category theory provides two approaches to classical universal
algebra. On the one hand, we have finitary monads on $\Set$ and on the
other hand, we have Lawvere theories. Relating the two approaches we
have Linton's result~\cite{Linton1966Some}, which shows that the
category of finitary monads on $\Set$ is equivalent to the category of
Lawvere theories. An essential feature of this equivalence is that it
respects semantics, in the sense that the algebras for a finitary
monad coincide up to equivalence over $\Set$ with the models of the
associated theory, and vice versa.

There have been a host of generalisations of the above story, each
dealing with algebraic structure borne by objects more general than
sets. In many of these~\cite{Power1999Enriched, Nishizawa2009Lawvere,
  Lack2009Gabriel-Ulmer, Lack2011Notions}, one starts on one side with
the monads on a given category that preserve a specified class of
colimits. This class specifies, albeit indirectly, the arities of
operations that may arise in the algebraic structures encoded by such
monads, and from this one may define, on the other side, corresponding
notions of theory and model. These are subtler than in the classical
setting, but once the correct definitions have been found, the
equivalence with the given class of monads, and the compatibility with
semantics, follows much as before.

The most general framework for a monad--theory correspondence to date
involves the notions of \emph{monad with arities} and \emph{theory
  with arities}. In this setting, the permissible arities of
operations are part of the basic data, given as a small, dense, full
subcategory of the base category. The monads with arities were
introduced first, in~\cite{Weber2007Familial}, as a setting for an
abstract nerve theorem. Particular cases of this theorem include the
classical nerve theorem, identifying categories with simplicial sets
satisfying the Segal condition of~\cite{Segal1968Classifying}, and
also Berger's nerve theorem~\cite{Berger2002A-Cellular} for the
globular higher categories of~\cite{Batanin1998Monoidal}. More
saliently, when Weber's nerve theorem is specialised to the settings
appropriate to the monad--theory correspondences listed above, it
becomes exactly the fact that the functor sending the algebras for a
monad to the models of the associated theory is an equivalence. This
observation led \cite{Mellies2010Segal} and~\cite{Berger2012Monads} to
introduce \emph{theories with arities}, and to prove, by using Weber's
nerve theorem, their equivalence with the monads with arities. The
monad--theory correspondence obtained in this way is general enough to
encompass all of the instances from~\cite{Power1999Enriched,
  Nishizawa2009Lawvere, Lack2009Gabriel-Ulmer, Lack2011Notions}.

Our own work in this paper has two motivations: one abstract and one
concrete. Our abstract motivation is a desire to explain the
apparently \emph{ad hoc} design choices involved in the monad--theory
correspondences outlined above. For indeed, while these choices must
be carefully balanced in order to obtain an equivalence, there is no
reason to believe that different careful choices might not yield more
general or more expressive results.

Our concrete motivation comes from the study of the Grothendieck weak
$\omega$-groupoids introduced by
Maltsiniotis~\cite{Maltsiniotis2010Grothendieck}, which, by
definition, are models of a globular theory in the sense of
Berger~\cite{Berger2002A-Cellular}. Globular theories describe
algebraic structure on globular sets with arities drawn from the dense
subcategory of \emph{globular cardinals}; see
Example~\ref{ex:2}\ref{item:2} below. However, globular theories are
not necessarily theories with arities, and in particular, those
capturing higher groupoidal structures are not. As such, they do not
appear to one side of any of the monad--theory correspondences
described above.
 
The first goal of this paper is to describe a new schema for
monad--theory correspondences which addresses the gaps in our
understanding noted above. In this schema, once we have fixed the
process by which a theory is associated to a monad, everything else is
forced. This addresses our first, abstract motivation. The
correspondence obtained in this way is in fact \emph{best possible},
in the sense that any other monad--theory correspondence for the same
kind of algebraic structure must be a restriction of this particular
one. In many cases, this best possible correspondence coincides with
one in the literature, but in others, our correspondence goes beyond
what already exists. In particular, an instance of our schema will
identify the globular theories of~\cite{Berger2002A-Cellular} with a
suitable class of monads on the category of globular sets. This
addresses our second, concrete motivation.

The further goal of this paper is to study the classes of monads and
theories that arise from our correspondence-schema. We do so both at a
general level, where we will see that both the monads and the theories
are closed under essentially all the constructions one could hope for;
and also at a practical level, where we will see how these general
constructions allow us to give expressive and intuitive
\emph{presentations} for the structure captured by a monad or theory.

To give a fuller account of our results, we must first describe how a
typical monad--theory correspondence arises. As
in~\cite{Weber2007Familial}, the basic setting for such a
correspondence can be encapsulated by a pair consisting of a category
$\E$ and a small, full, dense subcategory
$K \colon \A \hookrightarrow \E$. For example, the Lawvere
theory--finitary monad correspondence for finitary algebraic structure
on sets is associated to the choice of $\E = \cat{Set}$ and
$\A = \mathbb{F}$ the full subcategory of finite cardinals.

Given $\E$ and $K \colon \A \hookrightarrow \E$, the goal is to
establish an equivalence between a suitable category of
\emph{$\A$-monads} and a suitable category of \emph{$\A$-theories}.
The $\A$-monads will be a certain class of monads on $\E$; while the
$\A$-theories will be a certain class of identity-on-objects functors
out of $\A$. We are being deliberately vague about the conditions on
each side, as they are among the seemingly \emph{ad hoc} design
choices we spoke of earlier. But regardless of this, the monad--theory
correspondence itself always arises through application of the
following two constructions.

\begin{const}
  \label{const:A}
  For an $\A$-monad $\mathsf{T}$ on $\E$, the associated $\A$-theory
  $\Phi(\mathsf{T})$ is the identity-on-objects functor
  $J_\mathsf{T} \colon \A \rightarrow \A_\mathsf{T}$ arising from the
  (identity-on-objects, fully faithful) factorisation
  \begin{equation}\label{eq:19}
    \A \xrightarrow{J_\mathsf{T}} \A_\mathsf{T}
    \xrightarrow{V_\mathsf{T}}  \E_\mathsf{T}
  \end{equation}
  of the composite
  $F_\mathsf{T} K \colon \A \rightarrow \E \rightarrow \E_\mathsf{T}$.
  Here $F_\mathsf{T}$ is the free functor into the Kleisli category
  $\E_\mathsf{T}$, so $\A_\mathsf{T}$ is equally the full
  subcategory of $\E_\mathsf{T}$ with objects those of $\A$.
\end{const}  
 
\begin{const}
  \label{const:B}
  For an $\A$-theory $J \colon \A \rightarrow \T$, the associated
  $\A$-monad $\Psi (\T)$ is obtained from the category of
  \emph{concrete $\T$-models}, which is by definition the
  pullback \begin{equation}\label{eq:20} \cd{ {\conc} \pullbackcorner
      \ar[rr]^-{} \ar[d]_{U^{\T}} &&
      {[\T^\mathrm{op}, \cat{Set}]} \ar[d]^{[J^\mathrm{op}, 1]} \\
      {\E} \ar[rr]^-{N_K=\E(K-,1)} && {[\A^\mathrm{op},
        \cat{Set}]}\rlap{ .} }
  \end{equation}
  Since $U^{\T}$ is a pullback of the strictly monadic
  $[J^\mathrm{op}, 1]$, it will be strictly monadic so long as it has
  a left adjoint. The assumption that $\E$ is locally
    presentable ensures that this is the case, and so we can take
  $\Psi(\T)$ to be the monad whose algebras are the concrete
  $\T$-models.
\end{const}

There remains the problem of choosing the appropriate conditions on a
monad or theory for it to be an $\A$-monad or $\A$-theory. Of course,
these must be carefully balanced so as to obtain an equivalence, but
this still seems to leave too many degrees of freedom; one might hope
that everything could be determined from $\E$ and $\A$ alone. The main
result of this paper shows that this is so: there are notions of
$\A$-monad and $\A$-theory which require no further choices to be
made, and which rather than being plucked from the air, may be derived
in a principled manner.

The key observation is that Constructions~\ref{const:A}
and~\ref{const:B} make sense when given as input \emph{any} monad on
$\E$, or \emph{any} ``$\A$-pretheory''---by which we mean simply an
identity-on-objects functor out of $\A$. When viewed in this greater
generality, these constructions yield an adjunction
\begin{equation}\label{eq:7}
    \cd[@C+1em]{
    {\mnd} \ar@<-4.5pt>[r]_-{\Phi} \ar@{}[r]|-{\bot} &
    {\preth} \ar@<-4.5pt>[l]_-{\Psi}
  }
\end{equation}
between the category of monads on $\E$ and the category of
$\A$-pretheories. Like any adjunction, this restricts to an
equivalence between the objects at which the counit is invertible, and
the objects at which the unit is invertible. Thus, if we \emph{define}
the $\A$-monads and $\A$-theories to be the objects so arising, then
we obtain a monad--theory equivalence. By construction, it will be the
\emph{largest} possible equivalence whose two directions are given by
Constructions~\ref{const:A} and~\ref{const:B}.

Having defined the $\A$-monads and $\A$-theories abstractly, it
behooves us to give tractable concrete characterisations. In fact, we
give a number of these, allowing us to relate our correspondence to
existing ones in the literature. We also investigate further aspects
of the general theory, and provide a wide range of examples
illustrating the practical utility of our results.

Before getting started, we conclude this introduction with a more
detailed outline of the paper's contents. In
Section~\ref{sec:monads-pretheories}, we begin by introducing our
basic setting and notions. We then construct, in
Theorem~\ref{thm:adjunction}, the adjunction~\eqref{eq:7} between
monads and pretheories. In Section~\ref{sec:preth-as-pres}, with this
abstract result in place, we introduce a host of running examples of
our basic setting. To convince the reader of the expressive power of
our notions, we construct, via colimit presentations, specific
pretheories for a variety of mathematical structures.

In Section~\ref{sec:monad-theory-corr} we obtain our main result by
characterising the fixpoints of the monad--theory adjunction: the
$\A$-monads and $\A$-theories described above. The $\A$-monads are
characterised as what we term the \emph{$\A$-nervous monads}, since
they are precisely those monads for which Weber's nerve theorem holds.
The $\A$-theories turn out to be precisely those $\A$-pretheories for
which each representable is a model; in the motivating case where
$\E = \cat{Set}$ and $\A = \mathbb{F}$, they are exactly the Lawvere
theories. With these characterisations in place, we obtain our main
Theorem~\ref{thm:1}, which describes the ``best possible'' equivalence
between $\A$-theories and $\A$-nervous monads.

Section~\ref{sec:semantics} develops some of the general results
associated to our correspondence-schema. We begin by showing that our
monad--theory correspondence commutes, to within isomorphism, with the
taking of semantics on each side. We also prove that the functors
taking semantics are valued in monadic right adjoint functors between
locally presentable categories. The final important result of this
section states that colimits of $\A$-nervous monads and $\A$-theories
are algebraic, meaning that the semantics functors send them to
limits.

Section~\ref{sec:identifying-theories} is devoted to exploring what
the $\A$-nervous monads and $\A$-theories amount to in our running
examples. In order to understand the $\A$-nervous monads, we prove the
important result that they are equally the colimits, amongst all
monads, of free monads on $\A$-signatures. We also introduce the
notion of a \emph{saturated class of arities} as a setting in
which,~like in \cite{Power1999Enriched, Nishizawa2009Lawvere,
  Lack2009Gabriel-Ulmer, Lack2011Notions}, the $\A$-nervous monads can
be characterised in terms of a colimit-preservation property. With
these results in place, we are able to exhibit many of these existing
monad--theory correspondences as instances of our general framework.

In Section~\ref{sec:arities}, we examine the relationship between the
monads and theories of our correspondence, and the \emph{monads with
  arities} and \emph{theories with arities}
of~\cite{Weber2007Familial, Mellies2010Segal, Berger2012Monads}. In
particular, we see that every monad with arities $\A$ is an
$\A$-nervous monad but that the converse implication need not be true:
so $\A$-nervous monads are \emph{strictly} more general. Of course,
the same is also true on the theory side. We also exhibit a further
important point of difference: colimits of monads with arities, unlike
those of nervous monads, need not be algebraic. This means that there
is no good notion of \emph{presentation} for monads or theories with
arities.

Finally, in Section~\ref{sec:deferred-proofs}, we give a number of
proofs deferred from Section~\ref{sec:identifying-theories}.

\section{Monads and pretheories}
\label{sec:monads-pretheories}

\subsection{The setting}
\label{sec:setting}
In this section we construct the monad--pretheory
adjunction
\begin{equation}
  \label{eq:adj}
      \cd[@C+1em]{
    {\mnd} \ar@<-4.5pt>[r]_-{\Phi} \ar@{}[r]|-{\bot} &
    {\preth\rlap{ .}} \ar@<-4.5pt>[l]_-{\Psi}
  }
\end{equation}
The setting for this, and the rest of the paper, involves two basic
pieces of data:
\begin{enumerate}[(i)]
\item A locally presentable $\V$-category $\E$ with respect to which
  we will describe the monad--pretheory adjunction; and
\item A notion of \emph{arities} given by a small, full, dense
  sub-$\V$-category $K \colon \A \hookrightarrow \E$.
\end{enumerate}

We will discuss examples in Section~\ref{sec:setting} below, but for
now let us clarify some of the terms appearing above. While in the
introduction, we focused on the unenriched context, we now work in the
context of category theory enriched over a symmetric monoidal closed
category $\V$ which is \emph{locally presentable} as
in~\cite{Gabriel1971Lokal}. In this context, a \emph{locally
  presentable $\V$-category}~\cite{Kelly1982Structures} is one which
is cocomplete as a $\V$-category, and whose underlying ordinary
category is locally presentable.

We recall also some notions pertaining to density. Given a
$\V$-functor $K \colon \A \rightarrow \E$ with small domain, the
\emph{nerve functor}
${N_K \colon \E \rightarrow [\A^\mathrm{op}, \V]}$ is defined by
$N_K(X) = \E(K\thg, X)$. We call a presheaf in the essential image of
$N_K$ a \emph{$K$-nerve}, and we write $\KN(\V)$ for the full
sub-$\V$-category of $[\A^\mathrm{op}, \V]$ determined by these.

We say that $K$ is \emph{dense} if $N_K$ is fully faithful; whereupon
$N_K$ induces an equivalence of categories $\E \simeq \KN(\V)$.
Finally, we call a small sub-$\V$-category $\A$ of a $\V$-category
$\E$ \emph{dense} if its inclusion functor
$K \colon \A \hookrightarrow \E$ is so.

\subsection{Monads}
\label{sec:monads}

We write $\mnd$ for the (ordinary) category whose objects are
$\V$-monads on $\E$, and whose maps $\mathsf S \to \mathsf T$ are
$\V$-transformations $\alpha \colon S \Rightarrow T$ compatible with
unit and multiplication. For each $\mathsf T \in \mnd$ we have the
$\V$-category of algebras $U^\mathsf T \colon \Alg{T} \to \E$ over
$\E$, but also the \emph{Kleisli $\V$-category}
$F_\mathsf T \colon \E \to \Kl{T}$ under~$\E$, arising from an
(identity-on-objects, fully faithful) factorisation
\begin{equation}
  \cd[@-0.8em]{
    & {\E} \ar[dl]_-{F_\mathsf{T}} \ar[dr]^-{F^\mathsf{T}} \\
    {\E_{\mathsf{T}}} \ar[rr]^-{W_\mathsf{T}} & &
    {\E^{\mathsf{T}}}}
\end{equation}
of the free $\V$-functor $F^{\mathsf T} \colon \E \to \Alg{T}$;
concretely, we may take $\Kl{T}$ to have objects those of $\E$,
hom-objects $\Kl{T}(A,B) = \E(A,TB)$, and composition and identities
derived using the monad structure of $\mathsf T$. Each monad morphism
$\alpha \colon \mathsf S \to \mathsf T$ induces, functorially in
$\alpha$, $\V$-functors $\alpha^\ast$ and $\alpha_!$ fitting into
diagrams
\begin{equation}
  \cd[@C+0.5em@-1em]{ \Alg{T} \ar[rr]^-{\alpha^\ast}
    \ar[dr]_{U^\mathsf T} & & \Alg{S}
    \ar[dl]^{U^\mathsf S} & & & \E \ar[dl]_{F_\mathsf S}
    \ar[dr]^{F_\mathsf T} \\ & \E & & & \Kl{S}
    \ar[rr]^-{\alpha_!} & & \Kl{T}\rlap{ ;} }
\end{equation}
here $\alpha^\ast$ sends an algebra $a \colon TA \to A$ to
$a \circ \alpha_A \colon SA \to A$ and is the identity on homs, while
$\alpha_!$ is the identity on objects and has action on homs given by
the postcomposition maps
$\alpha_B \circ (\thg) \colon \Kl{S}(A,B) \rightarrow \Kl{T}(A,B)$. In
fact, every $\V$-functor $\Alg{T} \to \Alg{S}$ over $\E$ or
$\V$-functor $\Kl{S} \to \Kl{T}$ under $\E$ is of the form
$\alpha^\ast$ or $\alpha_!$ for a unique map of monads $\alpha$---see,
for example,~\cite{Meyer1975Induced}---and in this way, we obtain
\emph{fully faithful} functors
\begin{equation}\label{eq:10}
  \mnd^\mathrm{op} \xrightarrow{\mathrm{Alg}}
  \V\text-\cat{CAT}/\E \qquad \text{and} \qquad 
  \mnd \xrightarrow{\mathrm{Kl}} \E / \V\text-\cat{CAT}\rlap{ .}
\end{equation}

\subsection{Pretheories}
\label{sec:pretheories}

An \emph{$\A$-pretheory} is an identity-on-objects $\V$-functor
$J \colon \A \rightarrow \T$ with domain $\A$. We write $\preth$ for
the ordinary category whose objects are $\A$-pretheories and whose
morphisms are $\V$-functors commuting with the maps from $\A$. While
the $\A$-pretheory is only fully specified by both pieces of data $\T$
and $J$, we will often, by abuse of notation, leave $J$ implicit and
refer to such a pretheory simply as $\T$.

Just as any $\V$-monad has a $\V$-category of algebras, so any
$\A$-pretheory has a $\V$-category of models.
Generalising~\eqref{eq:20}, we define the $\V$-category of
\emph{concrete $\T$-models} $\conc$ by a pullback of $\V$-categories
as below left; so a concrete $\T$-model is an object $X \in \E$
together with a chosen extension of
$\E(K\thg, X) \colon \A^\mathrm{op} \rightarrow \V$ along
$J^\mathrm{op} \colon \A^\mathrm{op} \rightarrow \T^\mathrm{op}$. The
reason for the qualifier ``concrete'' will be made clear in
Section~\ref{sec:generalised-models} below, where we will identify a
more general notion of model.
\begin{equation}\label{eq:15}
  \cd[@C-0.5em]{
    {\conc} \pullbackcorner \ar[r]^-{P_\T} \ar[d]_{U_\T} &
    {[\T^\mathrm{op}, \V]} \ar[d]^{[J^\mathrm{op}, 1]} &
    {\conc[\S]} \ar[r]^-{P_{\S}} \ar[d]_{U_{\S}} &
    {[\S^{\mathrm{op}}, \V]} \ar[r]^-{[H^\mathrm{op}, 1]} &
    {[\T^\mathrm{op}, \V]} \ar[d]^{[J^\mathrm{op}, 1]} \\
    {\E} \ar[r]^-{N_K} &
    {[\A^\mathrm{op}, \V]} &
    {\E} \ar[rr]^-{N_K} &&
    {[\A^\mathrm{op}, \V]}
  }
\end{equation}

\begin{Rk}
  \label{rk:1}
  Avery considers a notion very similar to our $\A$-pretheories under
  the name
  \emph{prototheories}~\cite[Definition~4.1.1]{Avery2017Structure}.
  The differences are that Avery's prototheories $\A \rightarrow \T$
  are not enriched, and the hom-sets of $\T$ need not be small. He
  also defines a category of (concrete) models for a prototheory,
  relative to a given functor $\E \rightarrow [\A^\mathrm{op}, \C]$
  called an \emph{aritation}. When this functor is the nerve
  $N_K \colon \E \rightarrow [\A^\mathrm{op}, \cat{Set}]$, his
  category of models agrees with our $\cat{Mod}_c(\T)$.
\end{Rk}

Any $\A$-pretheory map $H \colon \T \rightarrow \S$ gives a functor
${H^\ast \colon \conc[\S] \rightarrow \conc}$ over $\E$ by applying
the universal property of the pullback left above to the commuting
square on the right. In this way, we obtain a semantics functor:
\begin{equation}\label{eq:21}
  \preth^\mathrm{op} \xrightarrow{\mathrm{Mod}_c}
  \V\text-\cat{CAT} / \E\rlap{ .}
\end{equation}
However, unlike~\eqref{eq:10}, this is \emph{not} always fully
faithful. Indeed, in Example~\ref{ex:6} below, we will see that
non-isomorphic pretheories can have isomorphic categories of concrete
models over~$\E$.

\subsection{Monads to pretheories}
\label{sec:from-monads-preth}

We now define the functor $\Phi \colon \mnd \rightarrow \preth$
in~\eqref{eq:adj}. As in Construction~\ref{const:A} of the
introduction, this will take the $\V$-monad $\mathsf{T}$ to the
$\A$-pretheory $J_\mathsf{T} \colon \A \rightarrow \A_\mathsf{T}$
arising as the first part of an (identity-on-objects, fully faithful)
factorisation of $F_\mathsf{T} K \colon \A \rightarrow \Kl{T}$, as to
the left in:
\begin{equation}\label{eq:factorisations}
  \cd{
    \A \ar[d]_{K} \ar[r]^{J_{\mathsf T}} & \A_{\mathsf T}
    \ar[d]^{V_{\mathsf T}} & & \A \ar[d]_{K} \ar[r]^{J_{\mathsf T}} & \A_{\mathsf T} \ar[d]^{K_{\mathsf T}} \\
    \E \ar[r]^-{F_{\mathsf T}} & \Kl{T} & &
    \E \ar[r]^-{F^{\mathsf T}} & \Alg{T}\rlap{ .} }
\end{equation}
Since the comparison $W_{\mathsf T} \colon \Kl{T} \to \Alg{T}$ is
fully faithful, we can also view $J_{\mathsf T}$ as arising from an
(identity-on-objects, fully faithful) factorisation as above right;
the relationship between the two is that
$K_{\mathsf T} = W_{\mathsf T} \circ V_{\mathsf T}$. Both perspectives
will be used in what follows, with the functor
$K_{\mathsf T} \colon \A_{\mathsf T} \to \Alg{T}$ of particular
importance.

To define $\Phi$ on morphisms, we make use of the \emph{orthogonality}
of identity-on-objects $\V$-functors to fully faithful ones; this
asserts that any commuting square of $\V$-functors as below, with $F$
identity-on-objects and $G$ fully faithful, admits a unique diagonal
filler $J$ making both triangles commute.
\begin{equation*}
  \cd{
    {\A} \ar[r]^-{H} \ar[d]_{F} &
    {\C} \ar[d]^{G} \\
    {\B} \ar[r]^-{K} \ar@{-->}[ur]^-{J}  &
    {\D}
  }
\end{equation*}
Explicitly, $J$ is given on objects by $Ja = Ha$, and on homs by
\begin{equation*}
  \B(a,b) \xrightarrow{K_{a,b}} \D(Ka, Kb) = \D(GHa, GHb)
  \xrightarrow{(G_{Ha, Hb}{)}^{-1}} \C(Ha, Hb)\rlap{ .}
\end{equation*}

In particular, given a map $\alpha \colon \mathsf S \to \mathsf T$ of
$\mnd$, this orthogonality guarantees the existence of a diagonal
filler in the diagram below, whose upper triangle we take to be the
map
$\Phi(\alpha) \colon \Phi(\mathsf{S}) \rightarrow \Phi(\mathsf{T})$ in
$\preth$:
\begin{equation*}
  \cd{
    \A \ar[d]_{J_{\mathsf{S}}} \ar[rr]^{J_{\mathsf T}} & & \A_{\mathsf T}
    \ar[d]^{V_{\mathsf T}} \\
    \A_{\mathsf S} \ar[r]_{V_{\mathsf S}} \ar@{-->}[urr]^{} & \Kl{S} \ar[r]_{\alpha_!} &
    \Kl{T}\rlap{ .}
  }
\end{equation*}

\subsection{Pretheories to monads}
\label{sec:monad-preth-adjunct-1}

Thus far we have not exploited the local presentability of $\E$. It
\emph{will} be used in the next step, that of constructing the left
adjoint to $\Phi \colon \mnd \rightarrow \preth$. We first state a
general result which, independent of local presentability, gives a
sufficient condition for an individual pretheory to have a reflection
along $\Phi$. Here, by a \emph{reflection} of an object $c \in \C$
along a functor $U \colon \B \rightarrow \C$, we mean a representation
for the functor $\C(c, U\thg) \colon \B \rightarrow \cat{Set}$.

\begin{Thm}
  \label{thm:adjunction0}
  A pretheory $J \colon \A \to \T$ admits a reflection along $\Phi$
  whenever the forgetful functor $U_{\T} \colon \conc \to \E$ from the
  category of concrete models has a left adjoint $F_{\T}$. In this
  case, the reflection $\Psi \T$ is characterised by an isomorphism
  $\E^{\Psi \T} \cong \conc$ over $\E$, or equally, by a pullback
  square
  \begin{equation}
    \cd{
      \Alg{\mathrm{\Psi} \T} \ar[r]^-{} \ar[d]_{U^{\Psi \T}}
      &
      [\T^\op, \V] \ar[d]^{[J^\mathrm{op},1]}\\
      \E \ar[r]^-{N_K} & [\A^\mathrm{op}, \V]\rlap{ .}
    }\label{eq:defL}
  \end{equation}
\end{Thm}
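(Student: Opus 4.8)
The plan is to show that when $U_\T \colon \conc \to \E$ has a left adjoint $F_\T$, the resulting monad $\Psi(\T) = (U_\T F_\T, \dots)$ has the required universal property. The first step is to observe that $\conc$ is strictly monadic over $\E$: since $[J^\op,1] \colon [\T^\op,\V] \to [\A^\op,\V]$ is strictly monadic (restriction along an identity-on-objects functor always is), and $U_\T$ is its pullback along $N_K$, a standard argument shows $U_\T$ creates those coequalisers that $U_\T$-split, hence is strictly monadic as soon as it has a left adjoint; this gives the isomorphism $\Alg{\Psi(\T)} \cong \conc$ over $\E$, and pasting the pullback square~\eqref{eq:15} identifying $\conc$ then yields the pullback~\eqref{eq:defL}. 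So the two stated characterisations of $\Psi(\T)$ are equivalent, and it remains to prove that $\Psi(\T)$ so defined is a reflection of $\T$ along $\Phi$, i.e.\ that for any $\V$-monad $\mathsf S$ there is a natural bijection between monad maps $\Psi(\T) \to \mathsf S$ and pretheory maps $\T \to \Phi(\mathsf S)$.

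The key step is to translate both hom-sets into comparison functors and use the fully faithful embeddings from~\eqref{eq:10}. By~\eqref{eq:10}, a monad map $\Psi(\T) \to \mathsf S$ is the same as a $\V$-functor $\Alg{S} \to \Alg{\Psi(\T)}$ over $\E$; by~\eqref{eq:defL} (the pullback characterisation) such a $\V$-functor is precisely a $\V$-functor $\Alg{S} \to [\T^\op,\V]$ making the evident square with $N_K U^{\mathsf S}$ commute, equivalently a $\V$-functor $\Alg{S} \to [\T^\op,\V]$ extending $N_K U^{\mathsf S} \colon \Alg{S} \to [\A^\op,\V]$ along $[J^\op,1]$. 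On the other side, a pretheory map $\T \to \Phi(\mathsf S) = (J_{\mathsf S}\colon \A \to \A_{\mathsf S})$ is an identity-on-objects $\V$-functor $\T \to \A_{\mathsf S}$ commuting with the maps from $\A$; composing with $K_{\mathsf S} \colon \A_{\mathsf S} \to \Alg{S}$ (and recalling that $\A_{\mathsf S}$ is the full image of $F^{\mathsf S}K$) this is the same as a $\V$-functor $\T \to \Alg{S}$ whose restriction along $J$ is $F^{\mathsf S} K$, up to the bijection furnished by fully-faithfulness of $K_{\mathsf S}$ on the relevant homs. The comparison to make is then: a $\V$-functor $\T \to \Alg{S}$ over $F^{\mathsf S}K$ corresponds, by the (contravariant) Yoneda/density machinery, to a $\V$-functor $\Alg{S} \to [\T^\op,\V]$ extending $N_K U^{\mathsf S}$ along $[J^\op,1]$. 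Concretely, given $G\colon \T \to \Alg{S}$ one forms the presheaf $\Alg{S}(G\thg, -) \colon \Alg{S} \to [\T^\op,\V]$ wait --- more carefully, one uses that $N_K U^{\mathsf S}(M) = \E(K\thg, U^{\mathsf S}M) \cong \Alg{S}(F^{\mathsf S}K\thg, M)$ by the free/forgetful adjunction, so an extension along $J^\op$ of this presheaf on $\A^\op$ is exactly the data of $\Alg{S}(G\thg, M)$ for a $\V$-functor $G$ on $\T$ agreeing with $F^{\mathsf S}K$ on $\A$ --- the extension and the functor $G$ determine each other because the objects of $\T$ are those of $\A$ and the value of such a presheaf at $M$ is representable. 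Chasing through these identifications, and checking they are natural in $\mathsf S$, yields the desired bijection, hence the reflection.

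The main obstacle, and the step requiring genuine care, is the last identification: making precise that an extension of $\Alg{S}(F^{\mathsf S}K\thg, M) \colon \A^\op \to \V$ along $J^\op$ is equivalent data to a $\V$-functor $\T \to \Alg{S}$ restricting to $F^{\mathsf S}K$ --- and that this equivalence is natural in both $M \in \Alg{S}$ and $\mathsf S \in \mnd$. The subtlety is that a priori an extension is just a presheaf on $\T^\op$ together with an iso on $\A^\op$, with no representability built in; one must argue that because $J$ is bijective-on-objects and $\Alg{S}$ has the relevant (co)limits, such an extension is automatically "representable along $\T$" in the appropriate sense, so that it assembles into a functor out of $\T$. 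I expect this to go through by a pointwise argument: fix objects $a,b$ of $\A = \ob\T$; the extension assigns to the hom $\T(a,b) \in \V$ a map into $\V(\Alg{S}(F^{\mathsf S}Kb, M), \Alg{S}(F^{\mathsf S}Ka,M))$ natural in $M$, and by (enriched) Yoneda such natural families correspond to morphisms $\Alg{S}(F^{\mathsf S}Ka, F^{\mathsf S}Kb)$, i.e.\ to the action on homs of a would-be functor $G$; functoriality of $G$ then follows from the pretheory-functor axioms (or from the fact that the extension is a genuine $\V$-functor on $\T^\op$). Once this pointwise correspondence is in hand, naturality in $\mathsf S$ is a diagram chase using the definitions of $\alpha_!$, $\alpha^\ast$ and the induced maps on free objects, and the theorem follows.
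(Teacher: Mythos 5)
Your proposal is correct and follows essentially the same route as the paper: strict monadicity of $U_\T$ as a pullback of $[J^\mathrm{op},1]$, then a chain of natural bijections from monad maps $\Psi(\T)\to\mathsf S$ through functors $\Alg{S}\to[\T^\mathrm{op},\V]$ over $[\A^\mathrm{op},\V]$, a Yoneda/representability step to recover a functor $\T\to\Alg{S}$ restricting to $F^{\mathsf S}K$, and finally orthogonality of identity-on-objects against fully faithful functors to land in $\preth(\T,\Phi(\mathsf S))$. The one step you gloss over is the passage from squares commuting strictly over $N_KU^{\mathsf S}$ (what the pullback gives) to squares commuting strictly over the merely isomorphic $\Alg{S}(F^{\mathsf S}K\thg,-)$ (what corresponds to functors out of $\T$); the paper bridges this with a lemma on uniquely lifting natural isomorphisms through discrete isofibrations, applied to $[J^\mathrm{op},1]$, and your ``pointwise Yoneda'' handling of representability is essentially the paper's restriction to pointwise representable functors that resolves the size issue with $[\Alg{S},\V]$.
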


To prove this result, we will need a preparatory lemma, relating to
the notion of \emph{discrete isofibration}: this is a $\V$-functor
$U \colon \D \rightarrow \C$ such that, for each $f \colon c \cong Ud$
in $\C$, there is a unique $f^{\prime} \colon c^{\prime} \cong d$ in
$\D$ with $U(f^{\prime})=f$.

\begin{Ex}
  \label{ex:7}
  For any $\V$-monad $\mathsf{T}$ on $\C$, the forgetful $\V$-functor
  $U^\mathsf{T} \colon \C^\mathsf{T} \rightarrow \C$ is a discrete
  isofibration. Indeed, if $x \colon Ta \rightarrow a$ is a
  $\mathsf{T}$-algebra and $f \colon b \cong a$ in $\C$, then
  $y = f^{-1} \circ x \circ Tf \colon Tb \rightarrow b$ is the unique
  algebra structure on $b$ for which
  $f \colon (b,y) \rightarrow (a,x)$ belongs to $\C^\mathsf{T}$. In
  particular, for any identity-on-objects $\V$-functor
  $F \colon \A \rightarrow \B$ between small $\V$-categories, the
  functor $ [F,1] \colon [\B, \V] \rightarrow [\A, \V]$ has a left
  adjoint and strictly creates colimits, whence is strictly monadic.
  It is therefore a discrete isofibration by the above argument.
\end{Ex}

\begin{Lemma}\label{lem:cod2}
  Let $U \colon \A \rightarrow \B$ be a discrete isofibration and
  $\alpha \colon F \Rightarrow G \colon \X \rightarrow \B$ an
  invertible $\V$-transformation. The displayed projections give
  isomorphisms between liftings of $F$ through $U$, liftings of
  $\alpha$ through $U$, and liftings of $G$ through $U$:
  \begin{equation*}
    \cd[@+1em]{
      & \A \ar[d]^-{U} \\
      \X \ar[r]^-{F} \ar@{-->}[ur]^-{\bar F} & \B
    } \quad \xleftarrow{\mathrm{dom}} \quad
    \cd[@+1.2em]{
      & \A \ar[d]^-{U} \\
      \X \ar@/_0.7em/[r]_-{G} \xdtwocell{r}{\alpha} \xdrtwocell{ur}{
        \bar \alpha} \ar@/^0.7em/[r]^(0.65){F}
      \ar@/^0.7em/@{-->}[ur]^-{\bar F}
      \ar@/_0.7em/@{-->}[ur]_(0.6){\bar G} & \B
    } \quad \xrightarrow{\mathrm{cod}} \quad
    \cd[@+1em]{
      & \A \ar[d]^-{U} \\
      \X \ar[r]^-{G} \ar@{-->}[ur]^-{\bar G} & \B\rlap{ .}
    }
  \end{equation*}
\end{Lemma}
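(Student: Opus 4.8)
The plan is to unwind the definition of discrete isofibration and check that the two projection maps are bijections on the relevant hom-classes, object by object and hom by hom. First I would set up notation: a lifting $\bar F$ of $F$ through $U$ is a $\V$-functor $\bar F \colon \X \to \A$ with $U\bar F = F$; a lifting of $\alpha$ through $U$ is an invertible $\V$-transformation $\bar\alpha \colon \bar F \To \bar G$ with $U\bar\alpha = \alpha$ (in particular this forces $\bar F,\bar G$ to be liftings of $F,G$); and the two projections $\mathrm{dom}$ and $\mathrm{cod}$ send such a triple $(\bar F,\bar\alpha,\bar G)$ to $\bar F$ and $\bar G$ respectively. The claim is that both are isomorphisms of sets (or of $\V$-categories of liftings, but since these are discrete it suffices to argue on the level of objects).

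The key step is to construct the inverse of $\mathrm{dom}$: given a lifting $\bar F$ of $F$, I would produce a canonical lifting $\bar G$ of $G$ together with a lifting $\bar\alpha \colon \bar F \To \bar G$ of $\alpha$, and argue these are unique. For each object $X \in \X$ we have the isomorphism $\alpha_X \colon FX \cong GX$ in $\B$, and since $U\bar FX = FX$, the discrete isofibration property applied to $\alpha_X \colon FX \cong U(\bar FX)$ yields a unique isomorphism $\bar\alpha_X \colon (\bar G X)' \cong \bar FX$ in $\A$ with $U\bar\alpha_X = \alpha_X$; set $\bar GX \defeq (\bar GX)'$ and $\bar\alpha_X$ its inverse (orienting arrows to match the transformation $\bar F \To \bar G$). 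The only real content is checking that $X \mapsto \bar GX$ extends to a $\V$-functor with $U\bar G = G$ and that the $\bar\alpha_X$ assemble into a $\V$-natural transformation: for the functoriality of $\bar G$ one defines its action on homs by conjugating the action of $\bar F$ by the $\bar\alpha_X$'s, i.e.\ $\bar G_{X,Y} = \bar\alpha_Y^{-1} \circ \bar F_{X,Y} \circ \bar\alpha_X$ read in the appropriate internal-hom sense, and then $U$ applied to this is $\alpha_Y^{-1} \circ F_{X,Y} \circ \alpha_X = G_{X,Y}$ precisely by $\V$-naturality of $\alpha$; that $\bar\alpha$ is $\V$-natural is then automatic from the way $\bar G$ was defined. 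Uniqueness holds because at each object the lifted isomorphism is forced by the discrete isofibration property, and a lifting $\bar G$ is determined by its object-assignment together with $U\bar G = G$ once $\bar\alpha$ is fixed. This shows $\mathrm{dom}$ is a bijection; the argument for $\mathrm{cod}$ is identical with the roles of $F$ and $G$ interchanged and $\alpha$ replaced by $\alpha^{-1}$.

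The main obstacle is purely bookkeeping: phrasing the conjugation $\bar G_{X,Y} = \bar\alpha_Y^{-1} \circ \bar F_{X,Y} \circ \bar\alpha_X$ correctly at the enriched level, since $\bar F_{X,Y}$ is a map $\X(X,Y) \to \A(\bar FX, \bar FY)$ in $\V$ and postcomposition/precomposition with fixed isomorphisms must be expressed via the internal composition of $\A$. Once one fixes the convention that an isomorphism $g \colon a \cong a'$ in $\A$ induces $\V$-natural isomorphisms $\A(a,\thg) \cong \A(a',\thg)$ and $\A(\thg,a) \cong \A(\thg,a')$, the verification of the $\V$-functor axioms for $\bar G$ and the $\V$-naturality of $\bar\alpha$ is routine, and $U$-compatibility reduces exactly to the enriched naturality square for $\alpha$. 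No local presentability or density is needed; this is a general fact about discrete isofibrations.
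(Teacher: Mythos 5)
Your proposal is correct and follows essentially the same route as the paper: lift each component of $\alpha$ uniquely using the discrete isofibration property, define the second functor's action on homs by conjugating with the lifted isomorphisms so that $U$-compatibility reduces to the $\V$-naturality of $\alpha$, and obtain the remaining projection by symmetry (the paper inverts $\mathrm{cod}$ first and deduces $\mathrm{dom}$ by replacing $\alpha$ with $\alpha^{-1}$, whereas you do the reverse). The only blemishes are orientation slips in your formulas---since the paper's discrete isofibrations lift isomorphisms with prescribed \emph{codomain}, starting from $\bar F$ you must lift $\alpha_X^{-1}$ rather than $\alpha_X$, and the conjugation should read $\bar G f = \bar\alpha_Y \circ \bar F f \circ \bar\alpha_X^{-1}$---but these are exactly the bookkeeping points you flag and they do not affect the argument.
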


\begin{proof}
  Given $\bar G \colon \X \rightarrow \A$ as to the right, there is
  for each $x \in \X$ a unique lifting of the isomorphism
  $\alpha_x \colon Fx \cong U\smash{\bar G}x$ to one
  $\smash{\bar\alpha}_x \colon \smash{\bar F}x \cong \smash{\bar G}x$.
  There is now a unique way of extending $x \mapsto \smash{\bar F}x$
  to a $\V$-functor $\smash{\bar F} \colon \X \rightarrow \A$ so that
  $\bar \alpha \colon \bar F \cong \bar G$; namely, by taking the
  action on homs to be
  $\bar F_{x,y} = \A(\smash{\bar \alpha}_x, {\smash{\bar
      \alpha}_y}^{-1}) \circ \smash{\bar G}_{x,y} \colon X(x,y)
  \rightarrow \A(\smash{\bar F}x,\smash{\bar F}y)$. In this way, we
  have found a unique lifting of $\alpha$ through $U$ whose codomain
  is the given lifting of $G$ through $U$. So the right-hand
  projection is invertible; the argument for the left-hand one is the
  same on replacing $\alpha$ by $\alpha^{-1}$.
\end{proof}

We can now give:

\begin{proof}[Proof of Theorem~\ref{thm:adjunction0}]
  $U_\T$ has a left adjoint by assumption, and---as a pullback of the
  strictly monadic
  $[J^\mathrm{op},1]\colon [\T^\mathrm{op},\V] \to
  [\A^\mathrm{op},\V]$---strictly creates coequalisers for
  $U_{\T}$-absolute pairs. It is therefore strictly monadic. Taking
  $\Psi \T = U_\T F_\T$ to be the induced monad, we thus have an
  isomorphism $\E^{\Psi \T} \cong \conc$ over $\E$.

  It remains to exhibit isomorphisms
  $\mnd(\Psi \T, \mathsf{S}) \cong \preth(\T, \Phi \mathsf{S})$
  natural in $\mathsf{S}$. We do so by chaining together the following
  sequence of natural bijections. Firstly, by full fidelity
  in~\eqref{eq:10}, monad maps
  $\alpha_{0} \colon \Psi \T \to \mathsf{S}$ correspond naturally to
  functors $\alpha_{1}\colon \Alg{S} \to \Alg{\mathrm{\Psi}\T}$
  rendering commutative the left triangle in
  \begin{equation}\label{eq:13}
    \cd[@!C@C-3em]{
      \Alg{S} \ar[dr]_{U^{\mathsf S}} \ar[rr]^{\alpha_{1}} &&
      \Alg{\mathrm{\Psi}\T} \ar[dl]^{U^{\Psi \T}} && \Alg{S}
      \ar[rr]^-{\alpha_{2}} \ar[d]_{U^{\mathsf S}} && [\T^\mathrm{op},\V] \ar[d]^{[J^\mathrm{op},1]} \\
      & \E &&& \E \ar[rr]^-{N_{K}} && [\A^\mathrm{op},\V]\rlap{ .}
    }
  \end{equation}
  Since $\Alg{\mathrm{\Psi} \T}$ is defined by the
  pullback~\eqref{eq:defL}, such functors $\alpha_1$ correspond
  naturally to functors $\alpha_{2}$ rendering commutative the square
  above right. Next, we observe that there is a natural isomorphism in
  the triangle below left
  \begin{equation}\label{eq:14}
    \cd[@!C@C-2em]{
      \Alg{S} \ar[rr]^-{U^{\mathsf S}} \ar[dr]_-{N_{F^\mathsf{S}K}} & \twocong{d} &
      {\E} \ar[dl]^-{N_{K}} &
      \Alg{S} \ar[rr]^-{\alpha_{3}} \ar[dr]_-{N_{F^\mathsf{S}K}} && [\T^{\op},\V] \ar[dl]^-{[J^{\op},1]} \\
      & [\A^\mathrm{op},\V] && 
      & [\A^{\op},\V]}
  \end{equation}
  with components the adjointness isomorphisms
  $\E(Ka,U^{\mathsf S}b) \cong \Alg{S}(F^{\mathsf S}Ka,b)$. Since
  $J^\mathrm{op}$ is identity-on-objects, $[J^\mathrm{op}, 1]$ is a
  discrete isofibration by Example~\ref{ex:7}, whence by
  Lemma~\ref{lem:cod2} there is a natural bijection between functors
  $\alpha_2$ as in~\eqref{eq:13} and ones $\alpha_3$ as
  in~\eqref{eq:14}. We should now like to transpose this last triangle
  through the following natural isomorphisms (taking $\X = \A, \T$):
  \begin{equation}\label{eq:31}
    \VCat(\Alg{S},[\X^\mathrm{op},\V]) \cong \VCat(\X^\mathrm{op},[\Alg{S},\V])\rlap{ .}
  \end{equation}
  However, since $\E^\mathsf{S}$ is large, the
  functor category $[\E^\mathsf{S}, \V]$ will not always exist as a
  $\V$-category, and so~\eqref{eq:31} is ill-defined. To resolve
  this, note that $N_{F^\mathsf{S} K}$ is, by its definition,
  \emph{pointwise representable}; whence so too is $\alpha_3$, since $J$
  is identity-on-objects. We may thus transpose the right triangle
  of~\eqref{eq:14} through the legitimate isomorphisms
  \begin{equation}\label{eq:33}
    \VCat(\Alg{S},[\X^\mathrm{op},\V])_{\mathrm{pwr}} \cong
    \VCat(\X^\mathrm{op},[\Alg{S},\V]_\mathrm{rep})
  \end{equation}
  where on the left we have the category of pointwise representable
  $\V$-functors, and on the right, the \emph{legitimate} $\V$-category
  of representable $\V$-functors $\E^\mathsf{S} \rightarrow \V$.
  In
  this way, we establish a natural bijection between functors
  $\alpha_3$ and functors $\alpha_4$ rendering commutative the left
  square in:
  \begin{equation*}
    \cd[@!C]{
      \A^\mathrm{op} \ar[d]_{(F^{\mathsf S}K)^\mathrm{op}}
      \ar[r]^{J^\mathrm{op}} &
      \T^\mathrm{op} \ar[d]^{\alpha_{4}} \ar@{.>}[dl]|{(\alpha_{5})^\mathrm{op}} &&
      \A \ar[d]_{J_{\mathsf S}} \ar[r]^{J} &
      \T \ar[d]^{{\alpha_{5}}} \ar@{.>}[dl]|{\alpha_{6}} \\
      (\Alg{S})^\mathrm{op} \ar[r]_{Y} &
      \sh{r}{0.5em}{[\Alg{S},\V]\rlap{${}_\mathrm{rep}$}} &&
      \A_\mathsf{S} \ar[r]_{K_{\mathsf S}} & \Alg{S}\rlap{ .}
    }
  \end{equation*}
  Now orthogonality of the identity-on-objects $J^\mathrm{op}$ and the
  fully faithful $Y$ draws the correspondence between functors
  $\alpha_{4}$ and functors $\alpha_{5}$ satisfying
  $\alpha_5 \circ J = F^\mathsf{S}K$ as left above. Finally, since
  $\A_{\mathsf S}$ fits in to an (identity-on-objects, fully faithful)
  factorisation of $F^{\mathsf S}K$, orthogonality also gives the
  correspondence, as right above, between functors $\alpha_{5}$ and
  functors $\alpha_{6}$ satisfying $\alpha_6 \circ J = J_{\mathsf S}$,
  as required.
\end{proof}

We now show that the assumed local presentability of $\E$ ensures that
every pretheory has a reflection along $\Phi \colon \mnd \to \preth$,
which consequently has a left adjoint. The key result about locally
presentable categories enabling this is the following lemma.

\begin{Lemma}
  \label{lem:1}
  Consider a pullback square of $\V$-categories
  \begin{equation}
    \label{eq:6}
    \cd[@-0em]{
      \A \pullbackcorner \ar[r]^{F} \ar[d]_{U} & \B
      \ar[d]^{V}\\
      \C \ar[r]^-{G} & \D
    }
  \end{equation}
  in which $G$ and $V$ are right adjoints between locally presentable
  $\V$-categories and $V$ is strictly monadic. Then $U$ and $F$ are
  right adjoints between locally presentable $\V$-categories and $U$
  is strictly monadic.
\end{Lemma}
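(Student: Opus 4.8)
The plan is to reduce everything to the theory of monadic functors and accessibility. First I would observe that $U$ is automatically strictly monadic: it is a pullback of the strictly monadic $V$, and strict monadicity is stable under pullback along an arbitrary functor (by the Beck-type argument already used in the proof of Theorem~\ref{thm:adjunction0}, where $U_\T$ was shown to strictly create $U_\T$-absolute coequalisers because $[J^\op,1]$ does). So the real content is that $U$ has a left adjoint and that $\C'$ is locally presentable; everything else then follows, since a strictly monadic functor with locally presentable domain and codomain automatically has a locally presentable domain, and $G' = V \circ (\text{the top leg})$ composed appropriately will be a right adjoint once we know $\C'$ is complete and $G'$ preserves limits — which it does, being a pullback projection of limit-preserving functors, so $G'$ is a right adjoint by the adjoint functor theorem for locally presentable categories.

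The crux is therefore: \emph{why does $U \colon \C' \to \C$ have a left adjoint, and why is $\C'$ locally presentable?} Here is where local presentability enters essentially. The idea is that the pullback $\C'$ is an inserter-like/pseudo-limit-type construction that stays within the $2$-category of locally presentable $\V$-categories and right adjoint $\V$-functors. Concretely, I would argue that since $V$ is strictly monadic it is in particular accessible and conservative, $G$ is a right adjoint hence accessible, and the underlying ordinary pullback $\C'_0$ is the pseudopullback (the strict pullback agrees with the pseudopullback up to equivalence because $V$ is an isofibration, being strictly monadic — cf.\ Example~\ref{ex:7}). A pseudopullback of accessible categories along accessible functors is accessible, and a $2$-limit of locally presentable categories and right adjoints, computed as above, is again locally presentable; this is the standard closure result for the $2$-category $\V$-$\cat{LP}_r$ of locally presentable $\V$-categories and right adjoints (it has all PIE-limits, or one may cite the analogous unenriched statement and bootstrap using that $\V$ itself is locally presentable, so that $\V$-categories that are locally presentable are exactly the accessible $\V$-categories with all conical colimits). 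Having placed $\C'$ in this setting, $U$ and $G'$ are the structural projections, which in $\V$-$\cat{LP}_r$ are right adjoints by construction.

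I expect the main obstacle to be the enriched bookkeeping: one must be careful that "locally presentable $\V$-category'' is stable under the relevant $2$-limit, and that the pullback computed as strict $\V$-enriched pullback really is the bilimit one wants. The clean way around this is to avoid any delicate enriched accessibility argument and instead exploit strict monadicity directly: take the monad $\mathbb{T}$ on $\D'$ induced by $V$ (so $\D' \simeq (\D')_0$ need not even be invoked — work with $V$ monadic), pull the situation back, and produce the left adjoint to $U$ as a reflexive-coequaliser construction, exactly as in the proof of Theorem~\ref{thm:adjunction0} but now needing \emph{existence} rather than merely the universal property. Existence of these coequalisers, and hence of the left adjoint, is guaranteed because $\C$ is cocomplete (being locally presentable) and the relevant colimits are small; and then $\C' \cong \E^{\text{(some $\V$-monad)}}$-style reasoning, together with the fact that a category monadic over a locally presentable category via a finitary-enough — more precisely, accessible — monad is locally presentable, finishes the job. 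The one genuinely load-bearing external input is: \emph{a $\V$-category strictly monadic over a locally presentable $\V$-category, via a monad whose underlying endofunctor is accessible, is locally presentable}; the accessibility of our monad follows because it is built from $G$, $V$ and their adjoints, all of which are accessible. So the skeleton is (1) $U$ strictly monadic by pullback-stability; (2) left adjoint to $U$ exists using cocompleteness of $\C$; (3) the induced monad on $\C$ is accessible, so $\C'$ is locally presentable; (4) $G'$ preserves limits and has locally presentable domain, hence is a right adjoint.
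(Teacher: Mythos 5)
Your primary route is essentially the paper's own proof: since $V$ is strictly monadic it is a discrete isofibration, so the strict pullback agrees with the bipullback (the paper cites Joyal--Street for this), and the $2$-category of locally presentable $\V$-categories and right adjoint $\V$-functors is closed under bilimits in $\VCat$ (the paper cites Theorem~6.11 of Bird's thesis, which settles exactly the enriched bookkeeping you were worried about), after which strict monadicity of $U$ follows from Beck once the left adjoint is in hand, just as you say. Your fallback reflexive-coequaliser construction is therefore not needed --- and as phrased it leans on a construction that does not actually occur in the proof of Theorem~\ref{thm:adjunction0}, and would require the free-algebra machinery to make precise --- but the main argument is sound and matches the paper's.
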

\begin{proof}
  Since $V$ is strictly monadic, it is a discrete isofibration, and so
  its pullback against $G$ is,
  by~\cite[Corollary~1]{Joyal1993Pullbacks}, also a bipullback.
  By~\cite[Theorem~6.11]{Bird1984Limits} the $2$-category of locally
  presentable $\V$-categories and right adjoint functors is closed
  under bilimits in $\VCat$, so that both $U$ and $F$ are right
  adjoints between locally presentable categories. Finally, since $U$
  is a pullback of the strictly monadic $V$, it strictly creates
  coequalisers for $U$-absolute pairs. Since it is already known to be
  a right adjoint, it is therefore also strictly monadic.
\end{proof}

With this in place, we can now prove:

\begin{Thm}
 \label{thm:adjunction}
 Let $\E$ be locally presentable. Then $\Phi \colon \mnd \to \preth$
 has a left adjoint $\Psi \colon \preth \rightarrow \mnd$, whose value
 at the pretheory $J \colon \A \to \T$ is characterised by an
 isomorphism $\E^{\Psi(\T)} \cong \conc$ over $\E$, or equally, by a
 pullback square
\begin{equation}
  \cd{
    \Alg{\mathrm{\Psi}(\T)} \ar[r]^-{} \ar[d]_{U^{\Psi(\T)}}
    \pullbackcorner &
    [\T^\op, \V] \ar[d]^{[J^\mathrm{op},1]}\\
    \E \ar[r]^-{N_K} & [\A^\mathrm{op}, \V]\rlap{ .}
  }\label{eq:defL}
\end{equation}
\end{Thm}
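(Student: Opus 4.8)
The plan is to deduce Theorem~\ref{thm:adjunction} from Theorem~\ref{thm:adjunction0} by showing that, under the hypothesis of local presentability of $\E$, \emph{every} pretheory $J \colon \A \to \T$ satisfies the sufficient condition of that theorem, namely that $U_\T \colon \conc \to \E$ has a left adjoint. Once each object of $\preth$ has a reflection along $\Phi$, the left adjoint $\Psi$ exists by the standard pointwise criterion for adjoint functors, and its value and characterisation are exactly those recorded in Theorem~\ref{thm:adjunction0}, giving~\eqref{eq:defL}.

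First I would recall the defining pullback~\eqref{eq:15} of $\conc$:
\begin{equation*}
  \cd[@C-0.5em]{
    {\conc} \pullbackcorner \ar[r]^-{P_\T} \ar[d]_{U_\T} &
    {[\T^\mathrm{op}, \V]} \ar[d]^{[J^\mathrm{op}, 1]} \\
    {\E} \ar[r]^-{N_K} &
    {[\A^\mathrm{op}, \V]}\rlap{ .}
  }
\end{equation*}
I would then verify that this square satisfies the hypotheses of Lemma~\ref{lem:1}, with $G = N_K$, $V = [J^\mathrm{op},1]$, $U = U_\T$ and $G' = P_\T$. The functor $[J^\mathrm{op},1]$ is strictly monadic between locally presentable $\V$-categories: since $\T$ and $\A$ are small, $[\T^\mathrm{op},\V]$ and $[\A^\mathrm{op},\V]$ are locally presentable $\V$-categories, and strict monadicity of $[J^\mathrm{op},1]$ was observed in Example~\ref{ex:7} (as $J^\mathrm{op}$ is identity-on-objects, hence bijective on objects). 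It remains to check that $N_K \colon \E \to [\A^\mathrm{op},\V]$ is a right adjoint between locally presentable $\V$-categories: $\E$ is locally presentable by assumption, $[\A^\mathrm{op},\V]$ is locally presentable since $\A$ is small, and $N_K$ has a left adjoint given by left Kan extension along $K^\mathrm{op}$ — concretely, by density of $K$, the functor $N_K$ is fully faithful and, $\E$ being cocomplete, the pointwise left Kan extension $\Lan_{K^\mathrm{op}} \colon [\A^\mathrm{op},\V] \to \E$ exists and is left adjoint to it. (Alternatively one may invoke that $N_K$ is accessible and preserves limits, and apply the adjoint functor theorem for locally presentable categories.)

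With both hypotheses verified, Lemma~\ref{lem:1} yields that $U_\T$ is a right adjoint between locally presentable $\V$-categories (and moreover strictly monadic). In particular $U_\T$ has a left adjoint, so Theorem~\ref{thm:adjunction0} applies to the arbitrary pretheory $J \colon \A \to \T$ and produces a reflection $\Psi(\T)$ of $\T$ along $\Phi$, characterised by $\E^{\Psi(\T)} \cong \conc$ over $\E$, equivalently by the pullback square~\eqref{eq:defL}. Since this holds for every object of $\preth$, the functor $\Phi$ admits a left adjoint $\Psi \colon \preth \to \mnd$ with the stated values, and the naturality of the characterisation in $\T$ follows from the functoriality already built into Constructions~\ref{const:A} and~\ref{const:B} together with the naturality assertion of Theorem~\ref{thm:adjunction0}.

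The only real point requiring care — the ``main obstacle'', such as it is — is the verification that $N_K$ is a right adjoint between locally presentable $\V$-categories; everything else is an immediate citation of Lemma~\ref{lem:1}, Example~\ref{ex:7} and Theorem~\ref{thm:adjunction0}. This hinges on the enriched density of $K$ and the cocompleteness half of local presentability of $\E$, which together guarantee that the left Kan extension $\Lan_{K^\mathrm{op}}$ exists and is genuinely left adjoint to the fully faithful $N_K$; this is exactly the point at which the hypothesis ``$\E$ locally presentable'' (as opposed to merely having a dense subcategory) is used.
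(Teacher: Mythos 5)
Your proof is correct and follows essentially the same route as the paper: apply Lemma~\ref{lem:1} to the defining pullback~\eqref{eq:15} of $\conc$ (noting that $[J^\mathrm{op},1]$ is strictly monadic and $N_K$ is a right adjoint between locally presentable $\V$-categories) to conclude that $U_\T$ has a left adjoint, and then invoke Theorem~\ref{thm:adjunction0} pointwise. The only quibble is terminological: the left adjoint of $N_K$ is $W \mapsto W \star K$ (that is, $\Lan_Y K$ for $Y$ the Yoneda embedding of $\A$), rather than literally ``left Kan extension along $K^\mathrm{op}$'', though your justification via cocompleteness of $\E$ is exactly the right point.
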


\begin{proof}
  Let $J\colon\A \to \T$ be a pretheory. The pullback square
  \eqref{eq:15} defining $\conc$ is a pullback of a right adjoint
  functor between locally presentable categories along a strictly
  monadic one: so it follows from Lemma~\ref{lem:1} that
  $U_{\T}\colon\conc \to \E$ is a right adjoint, whence the result
  follows from Theorem~\ref{thm:adjunction0}.
\end{proof}

\begin{Rk}
  \label{rk:2}
  In Avery's study of prototheories, he establishes a
  \emph{structure--semantics}
  adjunction~\cite[Theorem~4.4.8]{Avery2017Structure} of the form
  $\cat{Proto}_\A(\E)^\mathrm{op} \leftrightarrows \cat{CAT} / \E$,
  where here $\cat{CAT}$ is the category of \emph{large} categories.
  By restricting to the locally small prototheories to the left and to
  the strictly monadic functors to the right of this adjunction, one
  can recover, via~\eqref{eq:10}, the unenriched case of our
  adjunction~\eqref{eq:adj}.
  \end{Rk}

\section{Pretheories as presentations}
\label{sec:preth-as-pres}
In the next section, we will describe how the monad--pretheory
adjunction~\eqref{eq:adj} restricts to an equivalence between suitable
subcategories of \emph{$\A$-theories} and of \emph{$\A$-nervous
  monads}. However, the results we have so far are already practically
useful. The notion of $\A$-pretheory provides a tool for presenting
certain kinds of algebraic structure, by exhibiting them as categories
of concrete $\T$-models for a suitable pretheory in a manner
reminiscent of the theory of \emph{sketches}~\cite{Barr1985Toposes}.
Equivalently, via the functor $\Psi$, we can see $\A$-pretheories as a
way of presenting certain monads on $\E$.

\subsection{Examples of the basic setting}
\label{sec:basic-setting}
Before giving examples of algebraic structures presented by
pretheories, we first describe a range of examples of the basic
setting of Section~\ref{sec:setting} above.

\begin{Exs}\label{ex:2}
  We begin by considering the unenriched case where $\V = \cat{Set}$.
  \begin{enumerate}[(i)]
  \item Taking $\E = \cat{Set}$ and $\A = \mathbb{F}$ the full
    subcategory of finite cardinals captures the classical case of
    \emph{finitary} algebraic structure borne by \emph{sets}; so
    examples like groups, rings, lattices, Lie algebras, and so
    on.\label{item:3}
    
  \item Taking $\E$ a locally finitely presentable category and
    $\A = \E_f$ a skeleton of the full subcategory of finitely
    presentable objects, we capture \emph{finitary} algebraic
    structure borne by \emph{$\E$-objects}. Examples when
    $\E = \cat{Cat}$ include \emph{finite product}, \emph{finite
      colimit}, and \emph{monoidal closed} structure; for
    $\E = \cat{CRng}$, we have \emph{commutative $k$-algebra},
    \emph{differential ring} and \emph{reduced ring}
    structure.\label{item:11}

  \item We can replace ``finitary'' above by ``$\lambda$-ary'' for any
    regular cardinal $\lambda$. For example, when
    $\lambda = \aleph_1$, this allows for the structure of poset with
    joins of $\omega$-chains~\cite{Markowsky1976Chain-complete} when
    $\E = \cat{Set}$, and for \emph{countable product} structure when
    $\E = \cat{Cat}$. When $\E = [\O(X)^\mathrm{op}, \cat{Set}]$ for
    some space $X$, and $\lambda$ is suitably chosen, it also permits
    \emph{sheaf} or \emph{sheaf of rings} structure.\label{item:12}

  \item Let $\mathbb G_{1}$ be the category freely generated by the
    graph $0 \rightrightarrows 1$, so that
    $\E = [\mathbb G_{1}^\mathrm{op},\Set]$ is the category of
    directed multigraphs, and let $\A = \Delta_0$ be the full
    subcategory of $[\mathbb G_{1}^\mathrm{op},\Set]$ on graphs of the
    form
    \begin{equation*} \cd{ [n]\defeq 0 \ar[r]^{} & 1 \ar[r] & \cdots
        \ar[r] & n } \qquad \qquad \text{for $n>0$.}
    \end{equation*}
    $\Delta_0$ is dense in $[\mathbb G_{1}^\mathrm{op},\Set]$ because
    it contains the representables $[0]$ and $[1]$. This example
    captures structure borne by graphs in which the operations build
    vertices and arrows from \emph{paths} of arrows: for example, the
    structures of \emph{categories}, \emph{involutive categories}, and
    \emph{groupoids}.
    \label{item:1}

  \item The \emph{globe category} $\mathbb G$ is freely generated by the
    graph
    \begin{equation*}
      \cd{
        0 \ar@<-3pt>[r]_{\tau} \ar@<3pt>[r]^{\sigma} & 1
        \ar@<-3pt>[r]_{\tau} \ar@<3pt>[r]^{\sigma} & 2
        \ar@<-3pt>[r]_{\tau} \ar@<3pt>[r]^{\sigma} & \cdots}
    \end{equation*}
    subject to the coglobular relations $\sigma \sigma = \sigma \tau$
    and $\tau \sigma = \tau \tau$. This means that for each $m > n$,
    there are precisely two maps
    $\sigma^{m-n}, \tau^{m-n} \colon n \rightrightarrows m$, which by
    abuse of notation we will write simply as $\sigma$ and $\tau$.

    The category $\E = [\mathbb G^\mathrm{op},\Set]$ is the category
    of \emph{globular sets}; it has a dense subcategory
    $\A = \Theta_0$, first described by Berger
    \cite{Berger2002A-Cellular}, whose objects have been termed
    \emph{globular cardinals} by Street \cite{Street2000The-petit}.
    The globular cardinals include the representables---the $n$-globes
    $Yn$ for each $n$---but also shapes such as the globular set with
    distinct cells as depicted below.
    \begin{equation}\label{eq:1}
      \xy
      (0,0)*+{\bullet}="00"; (20,0)*+{\bullet}="10";(40,0)*+{\bullet}="20";
      {\ar^{} "00"; "10"};{\ar@/^1.5pc/^{} "10"; "20"};{\ar@/_1.5pc/^{} "10"; "20"}; {\ar^{} "10"; "20"};{\ar@{=>}^{}(30,6)*+{};(30,1)*+{}};{\ar@{=>}^{}(30,-1)*+{};(30,-6)*+{}};
      \endxy
    \end{equation}
    The globular cardinals can be parametrised in various ways, for
    instance using trees \cite{Batanin1998Monoidal,
      Berger2002A-Cellular};
    following~\cite{Maltsiniotis2010Grothendieck}, we will use
    \emph{tables of dimensions}---sequences
    $\vec n=(n_{1}, \ldots ,n_{k})$ of natural numbers of odd length
    with $n_{2i-1} > n_{2i} < n_{2i+1}$. Given such a table $\vec{n}$
    and a functor $D \colon \mathbb G \to \C$, we obtain a diagram
    \begin{equation*}
      \xy
      (12,0)*+{Dn_{2}}="10"; (36,0)*+{Dn_{4}}="30";(60,-5)*+{\ldots}; (84,0)*+{Dn_{k-1}}="40";
      (0,-10)*+{Dn_{1}}="01"; (24,-10)*+{Dn_{3}}="21";(48,-10)*+{Dn_{5}}="41";(72,-10)*+{Dn_{k-2}}="31";(96,-10)*+{Dn_{k}}="51";
      {\ar_{D\tau} "10"; "01"};{\ar^{D\sigma} "10"; "21"};{\ar_{D\tau} "30"; "21"};{\ar^{D\sigma} "30"; "41"};{\ar_{D\tau} "40"; "31"};{\ar^{D\sigma} "40"; "51"};
      \endxy
    \end{equation*}
    whose colimit in $\C$, when it exists, will be written as
    $D(\vec n)$, and called the \emph{$D$-globular sum indexed by
      $\vec n$}. Taking
    $D=Y \colon \mathbb G \to [\mathbb G^\mathrm{op},\Set]$, the
    category $\Theta_0$ of globular cardinals is now defined as the
    full subcategory of $[\mathbb{G}^\mathrm{op}, \cat{Set}]$ spanned
    by the $Y$-globular sums. For example, the globular cardinal
    in~\eqref{eq:1} corresponds to the $Y$-globular sum
    $Y(1,0,2,1,2)$.

    This example captures algebraic structures on globular sets in
    which the operations build globes out of diagrams with shapes
    like~\eqref{eq:1}; these include \emph{strict $\omega$-categories}
    and \emph{strict $\omega$-groupoids}, but also the (globular)
    \emph{weak $\omega$-categories} and \emph{weak $\omega$-groupoids}
    studied in~\cite{Batanin1998Monoidal,Leinster2004Operads,
      Ara2013On-the-homotopy}.
    \label{item:2}
  \end{enumerate}
  We now turn to examples over enriched bases.
   
  \begin{enumerate}[(i)]
    \addtocounter{enumi}{5}
  \item Let $\V$ be a locally finitely presentable symmetric monoidal
    category whose finitely presentable objects are closed under the
    tensor product (cf.~\cite{Kelly1982Structures}). By taking
    $\E = \V$ and $\A = \V_{f}$ a skeleton of the full
    sub-$\V$-category of finitely presentable objects, we capture
    \emph{$\V$-enriched} finitary algebraic structure on $\V$-objects
    as studied in~\cite{Power1999Enriched}. When $\V = \cat{Cat}$ this
    means structure on categories $\C$ built from functors and natural
    transformations $\C^\I \rightarrow \C$ for finitely presentable
    $\I$: which includes \emph{symmetric monoidal} or \emph{finite
      limit} structure, but not \emph{symmetric monoidal closed} or
    \emph{factorization system} structure. Similarly, when
    $\V = \cat{Ab}$, it includes \emph{$A$-module} structure but not
    \emph{commutative ring} structure.
    \label{item:6}

  \item Taking $\V$ as before, taking $\E$ to be any locally finitely
    presentable $\V$-category~\cite{Kelly1982Structures} and taking
    $\A = \E_f$ a skeleton of the full subcategory of finitely
    presentable objects in $\E$, we capture $\V$-enriched finitary
    algebraic structure on $\E$-objects as studied
    in~\cite{Nishizawa2009Lawvere}. As before, there is the obvious
    generalization from finitary to $\lambda$-ary
    structure.\label{item:13}

  \item This example builds on~\cite{Lack2011Notions}. Let $\V$ be a
    locally presentable symmetric monoidal closed category, and
    consider a class of $\V$-enriched limit-types $\Phi$ with the
    property that the free $\Phi$-completion of a small $\V$-category
    is again small. A $\V$-functor $F \colon \C \rightarrow \V$ with
    small domain is called \emph{$\Phi$-flat} if its cocontinuous
    extension
    $\mathrm{Lan}_y F \colon [\C^\mathrm{op}, \V] \rightarrow \V$
    preserves $\Phi$-limits, and $a \in \V$ is
    \emph{$\Phi$-presentable} if
    $\V(a, \thg) \colon \V \rightarrow \V$ preserves colimits by
    $\Phi$-flat weights.

    Suppose that \emph{if $\C$ is small and $\Phi$-complete, then
      every $\Phi$-continuous $F \colon \C \rightarrow \V$ is
      $\Phi$-flat}; this is Axiom~A of~\cite{Lack2011Notions}. Then by
    Proposition~3.4 and \sec7.1 of \emph{ibid.}, we obtain an instance
    of our setting on taking $\E = \V$ and $\A = \V_\Phi$ a skeleton
    of the full sub-$\V$-category of $\Phi$-presentable objects.

    A key example takes $\V = \E = \Cat$ and $\Phi$ the class of
    finite products; whereupon $\V_\Phi$ is the subcategory
    $\mathbb{F}$ of finite cardinals, seen as discrete categories.
    This example captures \emph{strongly
      finitary}~\cite{Kelly1993Finite-product-preserving} structure on
    categories involving functors and transformations
    $\C^n \rightarrow \C$; this includes \emph{monoidal} or
    \emph{finite product} structure, but not \emph{finite limit}
    structure.
    \label{item:4}

  \item More generally, we can take
    $\E = \Phi\text-\cat{Cts}(\C, \V)$, the $\V$-category of
    $\Phi$-continuous functors $\C \rightarrow \V$ for some small
    $\Phi$-complete $\C$, and take $\A$ to be the full image of the
    Yoneda embedding
    $Y \colon \C^\mathrm{op} \rightarrow \Phi\text-\cat{Cts}(\C, \V)$.
    This example is appropriate to the study of ``$\Phi$-ary algebraic
    structure on $\E$-objects''---subsuming most of the preceding
    examples.\label{item:10}
  \end{enumerate}
\end{Exs}

\subsection{Pretheories as presentations}
\label{sec:preth-as-pres-1}

We will now describe examples of pretheories and their models in
various contexts; in doing so, it will be useful to avail ourselves of
the following constructions. Given a pretheory $\A \rightarrow \T$ and
objects $a,b \in \T$, to \emph{adjoin a morphism
  $f \colon a \rightarrow b$} is to form the $\V$-category $\T[f]$ in
the pushout square to the left of:
\begin{equation}\label{eq:4}
  \cd[@-0.3em]{
    {2} \ar[r]^-{\spn{a,b}} \ar[d]_{\iota} &
    {\T} \ar[d]^{\bar \iota} & &
    {\atwo +_2 \atwo} \ar[r]^-{\spn{f,g}} \ar[d]_{\spn{\id,\id}} &
    {\T} \ar[d]^{\bar \iota} \\
    {\atwo} \ar[r]^-{f} & \pushoutcorner
    {\T[f]} & &
    {\atwo} \ar[r]^-{f = g} & 
    \sh{r}{1em}{\T[f\!=\!g]\rlap{ .}} \pushoutcorner
  }
\end{equation}
Here, $\iota \colon 2 \rightarrow \atwo$ is the inclusion of the free
$\V$-category on the set $\{0,1\}$ into the free $\V$-category
$\atwo =\{0 \to 1\}$ on an arrow. Since $\iota$ is
identity-on-objects, its pushout $\bar \iota$ may also be chosen thus,
so that we may speak of adjoining an arrow to a pretheory
$J \colon \A \rightarrow \T$ to obtain the pretheory
$J[f] = \bar \iota \circ J \colon \A \rightarrow \T[f]$.

Recall from~\eqref{eq:15} that a concrete $\T$-model comprises
$X \in \E$ and $F \in [\T^\mathrm{op}, \V]$ for which
$F \circ J^\mathrm{op} = \E(K\thg, X) \colon \A \rightarrow \V$. Thus,
by the universal property of the pushout~\eqref{eq:4}, a concrete
$\T[f]$-model is the same as a concrete $\T$-model $(X,F)$ together
with a map $[f] \colon \E(Kb, X) \rightarrow \E(Ka,X)$ in $\V$.

Similarly given parallel morphisms $f,g \colon a \rightrightarrows b$
in the underlying category of $\T$ we can form the pushout above
right. In this way we may speak of adjoining an equation $f=g$ to a
pretheory $J \colon \A \rightarrow \T$ to obtain the pretheory
$J[f\!=\!g] = \bar \iota \circ J \colon \A \rightarrow \T[f\!=\!g]$.
In this case, we see that a concrete $\T[f\!=\!g]$-model is a concrete
$\T$-model $(X,F)$ such that
$Ff = Fg \colon \E(Kb,X) \rightarrow \E(Ka,X)$.

\begin{Ex}
  \label{ex:1}
  In the context of Examples~\ref{ex:2}\ref{item:3} appropriate to
  classical finitary algebraic theories--- so $\V = \E = \cat{Set}$
  and $\A = \mathbb{F}$---we will construct a pretheory
  $J \colon \mathbb F \to \M$ whose category of concrete models is the
  category of monoids.

  We start from the initial pretheory
  $\id \colon \mathbb{F} \rightarrow \mathbb{F}$ whose concrete models
  are simply sets, and construct from it a pretheory
  $J_1 \colon \mathbb{F} \rightarrow \M_1$ by adjoining morphisms
  \begin{equation}
    \label{eq:ops1}
    m \colon 1 \rightarrow 2 \qquad \text{and} \qquad i \colon 1 \rightarrow 0
  \end{equation}
  representing the monoid multiplication and unit operations, and also
  morphisms
  \begin{equation}\label{eq:ops2}
    m1,1m \colon 2 \rightrightarrows 3 \qquad \text{and} \qquad 
    i1,1i \colon 2 \rightrightarrows 1
  \end{equation}
  which will be necessary later to express the monoid equations. Note
  that our directional conventions mean that the input arity of these
  operations is in the \emph{codomain} rather than the domain. It
  follows from the preceding remarks that a concrete $\M_1$-model is a
  set $X$ equipped with functions
  \begin{equation*}
    [m] \colon X^2 \rightarrow X \text{ ,}\quad [i] \colon 1 \rightarrow X\text{ ,} \quad [m1],
    [1m] \colon X^3 \rightrightarrows X^2\text{ ,} \quad [i1], [1i] \colon 1 \rightrightarrows X 
  \end{equation*}
  interpreting the morphisms adjoined above. We now adjoin to $\M_1$
  the eight equations necessary to render commutative the following
  squares in $\M_1$:
  \begin{equation}\label{eq:ax1}
    \cd[@C-0.89em@R-0.4em]{
      1 \ar[r]^-{m} \ar[d]_-{\iota_1} & 2 \ar[d]^-{\iota_1} &
      1 \ar[r]^-{\id} \ar[d]_-{\iota_1} & 1 \ar[d]^-{\iota_1} &
      1 \ar[r]^-{i} \ar[d]_-{\iota_1} & 0 \ar[d]^-{!} &
      1 \ar[r]^-{\id} \ar[d]_-{\iota_1} & 1 \ar[d]^-{\iota_1} 
      \\
      1+1 \ar[r]^-{m1} & 2+1 &
      1+1 \ar[r]^-{1m} & 1+2 &
      1+1 \ar[r]^-{i1} & 1 &
      1+1 \ar[r]^-{1i} & 1 \\
      1 \ar[u]^-{\iota_2} \ar[r]^-{\id} & 1 \ar[u]_-{\iota_2} &
      1 \ar[u]^-{\iota_2} \ar[r]^-{m} & 2 \ar[u]_-{\iota_2} &
      1 \ar[u]^-{\iota_2} \ar[r]^-{\id} & 1 \ar[u]_-{\id} &
      1 \ar[u]^-{\iota_2} \ar[r]^-{i} & 0 \ar[u]_-{!} 
    }
  \end{equation}
  where $\iota_1$, $\iota_2$ and $!$ are the images under $J_1$ of the
  relevant coproduct injections or maps from $0$ in $\mathbb{F}$;
  together with three equations which render commutative:
  \begin{equation}\label{eq:ax2}
    \cd[@-0.1em]{
      1 \ar[d]_{m} \ar[r]^-{m} & 2 \ar[d]^{1m} & \ar[dr]_{1} 1 \ar[r]^{m} & 2 \ar[d]^{i1} & 1 \ar[dr]_-{1} \ar[r]^-{m}& 2 \ar[d]^-{1i}\\
      2 \ar[r]^-{m1} & 3 && 1 && 1\rlap{ .}
    }
  \end{equation}
  A concrete model for the resulting theory
  $J \colon \mathbb{F} \rightarrow \M$ is a concrete $\M_1$-model
  $(X, F)$ for which
  $F^\mathrm{op} \colon \M_1 \rightarrow \cat{Set}^\mathrm{op}$ sends
  each diagram in~\eqref{eq:ax1} and~\eqref{eq:ax2} to a commuting one.
  Commutativity in~\eqref{eq:ax1} forces
  $[m1] = [m] \times \id \colon X^3 \rightarrow X^2$ and so on;
  whereupon commutativity of~\eqref{eq:ax2} expresses precisely the
  monoid axioms, so that concrete $\M$-models are monoids, as desired.
  Extending this analysis to morphisms we see that $\conc[\M]$ is
  isomorphic to the category of monoids and monoid homomorphisms.
\end{Ex}

\begin{Ex}
  \label{ex:6}
  In the same way we can describe $\mathbb F$-pretheories modelling
  any of the categories of classical universal algebra---groups, rings
  and so on. Note that the same structure can be presented by distinct
  pretheories: for instance, we could extend the pretheory $\M$ of the
  preceding example by adjoining a further morphism
  $m11 \colon 3 \to 4$ and two equations forcing it to become
  $[m]\times 1 \times 1 \colon X^{4} \to X^{3}$ in any model; on doing
  so, we would not change the category of concrete models. However, in
  $\M$, all of the maps $3 \to 4$ belong to $\mathbb F$ while in the
  new pretheory, $m11$ does not. This non-canonicity will be rectified
  by the \emph{theories} introduced in
  Section~\ref{sec:monad-theory-corr} below; in particular,
  Corollary~\ref{cor:5} implies that, to within isomorphism, there is
  at most one $\mathbb F$-theory which captures a given type of
  structure.
\end{Ex}

\begin{Ex}
  \label{ex:3}
  In the situation of Examples~\ref{ex:2}\ref{item:1}, where
  $\E = [\mathbb{G}_0^\mathrm{op}, \cat{Set}]$ is the category of
  directed graphs and $\A = \Delta_0$, we will describe a pretheory
  $\Delta_0 \rightarrow \C$ whose concrete models are categories. The
  construction is largely identical to the example of monoids above.
  Starting from the initial $\Delta_0$-pretheory, we adjoin
  composition and unit maps $m \colon [1] \rightarrow [2]$ and
  $i \colon [1] \rightarrow [0]$ as well as the morphisms
  $1m,m1 \colon [2] \rightrightarrows [3]$ and
  $i1,1i \colon [2] \rightrightarrows [1]$ required to describe the
  category axioms.

  We now adjoin the necessary equations. First, we have four equations
  ensuring that composition and identities interact appropriately with
  source and target:
  \begin{equation*}
    \cd[@-0.5em]{
      {[0]} \ar[r]^-{\sigma} \ar[d]_{\sigma} &
      {[1]} \ar[d]^{m} &
      {[0]} \ar[r]^-{\tau} \ar[d]_{\tau} &
      {[1]} \ar[d]^{m} &
      [0] \ar[r]^-{\sigma} \ar[dr]_-{\id} & [1] \ar[d]^-{i} &
      [0] \ar[r]^-{\tau} \ar[dr]_-{\id} & [1] \ar[d]^-{i} \\
      {[1]} \ar[r]^-{\iota_1} &
      {[2]} &
      {[1]} \ar[r]^-{\iota_2} &
      {[2]} &
      & [0] & & [0]
    }
  \end{equation*}
  where here we write $\sigma, \tau \colon [0] \rightrightarrows [1]$
  for the two endpoint inclusions, and $\iota_1, \iota_2$ for the two
  colimit injections into $[1] \mathbin{{}_\tau +_\sigma} [1] = [2]$.
  We also require analogues of the eight equations of \eqref{eq:ax1}
  and three equations of \eqref{eq:ax2}. The modifications are minor:
  replace $n$ by $[n]$, the coproduct inclusions
  $\iota_1 \colon n \rightarrow n+m \leftarrow m \colon \iota_2$ by the
  pushout inclusions
  $\iota_1 \colon [n] \rightarrow [n] \mathbin{{}_\tau +_\sigma} [m]
  \leftarrow [m] \colon \iota_2$, the first appearance of
  $! \colon 0 \to 1$ by $\sigma \colon [0] \to [1]$ and its second
  appearance by $\tau \colon [0] \to [1]$. After adjoining these six
  morphisms and fifteen equations, we find that the concrete models of
  the resulting pretheory $\Delta_0 \rightarrow \C$ are precisely
  \emph{small categories}.

  We can extend this pretheory to one for groupoids. To do so, we
  adjoin a morphism $c \colon [1] \to [1]$ modelling the inversion plus
  the further maps $1c \colon [2] \to [2]$ and $c1 \colon [2] \to [2]$
  required for the axioms. Now four equations must be adjoined to force
  the correct interpretation of $1c$ and $c1$, plus the two equations
  for left and right inverses. On doing so, the resulting pretheory
  ${\Delta_0} \to \G$ has as its concrete models the \emph{small
    groupoids}.
\end{Ex}

\begin{Ex}
  \label{ex:4}
  In the situation of Examples~\ref{ex:2}\ref{item:2}, where $\E$ is
  the category of globular sets and $\A = \Theta_0$ is the full
  subcategory of globular cardinals, one can similarly construct
  pretheories whose concrete models are strict $\omega$-categories or
  strict $\omega$-groupoids. For instance, one encodes binary
  composition of $n$-cells along a $k$-cell boundary (for $k<n$) by
  adjoining morphisms $m_{n,k} \colon Y(n) \rightarrow Y(n,k,n)$ to
  ${\Theta_0}$. In fact, all of the standard flavours of globular
  \emph{weak} $\omega$-category and \emph{weak} $\omega$-groupoid can
  also be encoded using $\Theta_0$-pretheories; see
  Examples~\ref{exs:TheoriesDensity}\ref{item:14} below.
\end{Ex}

\begin{Ex}
  \label{ex:5}
  Consider the case of Examples~\ref{ex:2}\ref{item:4} where
  $\V = \E = \cat{Cat}$ and $\A = \mathbb{F}$, the full subcategory of
  finite cardinals (seen as discrete categories). We will describe an
  $\mathbb{F}$-pretheory capturing the structure of a monoidal
  category. In doing so, we exploit the fact that our pretheories are
  no longer mere categories, but $2$-categories; so we may speak not
  only of adjoining morphisms and equations between such, but also of
  \textit{adjoining an (invertible) $2$-cell}---by taking a pushout of
  the inclusion $\atwo +_2 \atwo \rightarrow D_2$ of the parallel pair
  $2$-category into the free $2$-category on an (invertible)
  $2$-cell---and similarly of \emph{adjoining an equation between
    $2$-cells}.

  To construct a pretheory for monoidal categories, we start
  essentially as for monoids: freely adjoining the usual maps
  $m, i, m1,1m,i1,1i$ to the initial pretheory, but now also morphisms
  $m11,1m1,11m \colon 3 \to 4$ and $1i1 \colon 3 \to 2$ needed for the
  monoidal category coherence axioms; thus, ten morphisms in all.

  We now add the $8 \times 2 = 16$ equations asserting that each of
  the morphisms beyond $m$ and $i$ has the expected interpretation in
  a model, plus\footnote{It may be prima facie unclear why this is
    necessary; after all, if $1m, m11, m1$ and $11m$ have the intended
    interpretations in a model, then it is certainly the case that
    they will verify this equality. Yet this equality is not forced to
    hold \emph{in the pretheory}, and we need it to do so in order
    for~\eqref{eq:5} to type-check.} the equation
  $1m\circ m11 = m1 \circ 11m \colon 2 \to 4$. This being done, we
  next adjoin invertible $2$-cells
  \begin{equation*}
    \cd{
      {1} \ar[r]^-{m} \ar[d]_{m} \dtwocell{dr}{\alpha} &
      {2} \ar[d]^{m1} \\
      {2} \ar[r]_-{1m} &
      {3}
    } \qquad
    \cd{
      \dtwocell[0.55]{dr}{\lambda} & 2 \ar[d]^-{i1} \\
      1 \ar[r]_-{1} \ar@/^1em/[ur]^{m} & 1
    } \qquad
    \cd{
      \utwocell[0.55]{dr}{\rho} & 2 \ar[d]^-{1i} \\
      1 \ar[r]_-{1} \ar@/^1em/[ur]^{m} & 1
    }
  \end{equation*}
  expressing the associativity and unit coherences, as well as the
  invertible $2$-cells
  \begin{equation*}
    \cd{
      {2} \ar[r]^-{m1} \ar[d]_{m1} \dtwocell{dr}{\alpha 1} &
      {3} \ar[d]^{m11} \\
      {3} \ar[r]_-{1m1} &
      {4}
    } \qquad
    \cd{
      {2} \ar[r]^-{1m} \ar[d]_{1m} \dtwocell{dr}{1\alpha} &
      {3} \ar[d]^{1m1} \\
      {3} \ar[r]_-{11m} &
      {4}
    } \qquad
    \cd{
      \dtwocell[0.55]{dr}{1\lambda} & 3 \ar[d]^-{1i1} \\
      2 \ar[r]_-{1} \ar@/^1em/[ur]^{1m} & 2
    } \qquad
    \cd{
      \utwocell[0.55]{dr}{\rho 1} & 3 \ar[d]^-{1i1} \\
      2 \ar[r]_-{1} \ar@/^1em/[ur]^{m1} & 2
    }
  \end{equation*}
  which will be needed to express the coherence axioms. Finally, we
  must adjoin equations between $2$-cells: the $2 \times 4 = 8$
  equations ensuring that $\alpha 1$, $1\alpha$, $1 \lambda$ and
  $\rho 1$ have the intended interpretation in any model, plus two
  equations expressing the coherence axioms:
  \begin{equation}\label{eq:5}
    \cd[@C+0.2em@R-0.2em@-0.2em]{
      & 2 \ar[r]^-{m1} \dtwocell{d}{\alpha} & 3 \ar[dr]^-{m11} \\
      1 \ar[ur]^-{m} \ar[r]|-{m} \ar[dr]_-{m} &
      2 \ar[ur]_-{1m} \ar[dr]^-{m1} \dtwocell{d}{\alpha} & & 4 \\
      & 2 \ar[r]_-{1m} & 3 \ar[ur]_-{11m}
    } \quad = \quad
    \cd[@C+0.2em@R-0.2em@-0.2em]{
      & 2 \ar[r]^-{m1} \ar[dr]|-{m1} \dtwocell{dd}{\alpha} & 3 \ar[dr]^-{m11}
      \dtwocell{d}{\alpha 1} \\
      1 \ar[ur]^-{m} \ar[dr]_-{m} & &
      3 \ar[r]|-{1m1} \dtwocell{d}{1 \alpha} & 4 \\
      & 2 \ar[r]_-{1m} \ar[ur]|-{1m} & 3 \ar[ur]_-{11m}
    }
  \end{equation}
  \begin{equation}
    \label{eq:8}
    \cd[@-0.2em]{
      & 2 \ar@/^1pc/[drr]^-{\id} \dtwocell{dd}{\alpha} \ar[dr]|-{m1} &
      {} \dtwocell[0.55]{d}{\rho 1} \\
      1 \ar[ur]^-{m} \ar[dr]_-{m} & & 3 \ar[r]|-{1i1}
      \dtwocell[0.45]{d}{1 \lambda} & 2 \\
      & 2 \ar@/_1pc/[urr]_-{\id} \ar[ur]|-{1m} & {}
    } \quad = \quad
    \cd[@-0.2em]{
      & 2 \ar@/^1pc/[drr]^-{\id} &
      {} \\
      1 \ar[ur]^-{m} \ar[dr]_-{m} \dtwocell{rrr}{\id} & & & 2\rlap{ .} \\
      & 2 \ar@/_1pc/[urr]_-{\id} & {}
    }
  \end{equation}
  All told, we have adjoined ten morphisms, seventeen equations between
  morphisms, seven invertible $2$-cells, and nine equations between
  $2$-cells to obtain a pretheory
  $J \colon \mathbb{F} \rightarrow \M\C$ whose concrete models are
  precisely monoidal categories.
\end{Ex}

\section{The monad--theory correspondence}
\label{sec:monad-theory-corr}

In this section, we return to the general theory and establish our
``best possible'' monad--theory correspondence. This will be obtained
by restricting the adjunction~\eqref{eq:adj} to its \emph{fixpoints}:
the objects on the left and right at which the counit and the unit are
invertible. The categories of fixpoints are the \emph{largest}
subcategories on which the adjunction becomes an adjoint equivalence,
and it is in this sense that our monad--theory correspondence is the
best possible.

\subsection{A pullback lemma}
\label{sec:pullback-lemma}

The following lemma will be crucial in characterising the fixpoints
of~\eqref{eq:adj} on each side. Note that the force of (2) below is in
the ``if'' direction; the ``only if'' is always true.
\begin{Lemma}
  \label{lem:2}
  A commuting square in $\V\text-\cat{CAT}$
  \begin{equation*}
    \cd{
      {\A} \ar[r]^-{F} \ar[d]_{H} &
      {\B} \ar[d]^{K} \\
      {\C} \ar[r]^-{G} &
      {\D}
    }
  \end{equation*}
  with $G$ fully faithful and $H,K$ discrete isofibrations is a
  pullback just when:
  \begin{enumerate}[itemsep=0em]
  \item $F$ is fully faithful; and
  \item An object $b \in \B$ is in the essential image of $F$ if and
    only if $Kb$ is in the essential image of $G$.
  \end{enumerate}
\end{Lemma}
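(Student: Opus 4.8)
The plan is to verify both directions by exploiting Lemma~\ref{lem:cod2}, which says that a discrete isofibration is an isofibration in a suitably strict sense, together with the standard fact that a commuting square is a pullback of $\V$-categories precisely when it is bijective on objects-over-the-pullback and an isomorphism on all hom-objects. Form the comparison $\V$-functor $P \colon \A \to \B \times_\D \C$ into the actual pullback; since $H$ and $K$ are discrete isofibrations, so are the projections of the pullback, and our task reduces to showing $P$ is an isomorphism under hypotheses (1) and (2).

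For the ``only if'' direction (which, as the statement notes, is the easy one): if the square is a pullback, then $F$ is a pullback of the fully faithful $G$, hence fully faithful, giving~(1); and an object of the pullback is a pair $(b,c)$ with $Kb \cong Gc$... more directly, if $b = Fa$ then $Kb = KFa = GHa$ lies in the essential image of $G$, while conversely if $Kb \cong Gc$ then using that $K$ is a discrete isofibration we may lift the iso to replace $b$ by an isomorphic object strictly over $Gc$, exhibiting $(b,c)$ as an object of the pullback, hence $b$ in the essential image of $F$; this gives~(2).

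For the ``if'' direction: assume~(1) and~(2). On hom-objects, for $a,a' \in \A$ we must check that $\A(a,a') \to \B(Fa,Fa') \times_{\D(GHa,GHa')} \C(Ha,Ha')$ is an isomorphism; but $G$ fully faithful makes the right-hand vertical leg $\C(Ha,Ha') \to \D(GHa,GHa')$ an isomorphism, so the pullback projects isomorphically onto $\B(Fa,Fa')$, and the resulting map $\A(a,a') \to \B(Fa,Fa')$ is exactly $F_{a,a'}$, invertible by~(1). So $P$ is fully faithful. It remains to see $P$ is bijective on objects. Given an object of the pullback, i.e.\ $b \in \B$, $c \in \C$ with $Kb = Gc$ (strictly), hypothesis~(2) gives $a_0 \in \A$ and an iso $b \cong Fa_0$ in $\B$. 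Applying $K$ and using $K$ a discrete isofibration, then $G$ fully faithful, we transport along this iso to produce $a \in \A$ with $Ha = c$ on the nose and an iso $b \cong Fa$ over $\D$; but a pullback-object with a chosen iso to a genuine object of $\A$, combined with $H,K$ being discrete isofibrations and $G$ faithful enough to pin down the lift uniquely, forces $b = Fa$ and $(b,c) = P(a)$ on the nose. Uniqueness of $a$ (hence injectivity on objects) follows because $P$ is fully faithful and $\A$, being mapped by the discrete isofibration $H$, has no nontrivial automorphisms over its image that could be confused.

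The main obstacle I expect is the on-objects bijectivity, and specifically getting \emph{strict} equality $b = Fa$ rather than mere isomorphism: one has to chase the two discrete-isofibration lifting properties for $H$ and $K$ in tandem and check they are compatible, which is precisely the content one extracts from Lemma~\ref{lem:cod2} applied to the invertible transformation witnessing $b \cong Fa_0$. I would phrase this step by invoking Lemma~\ref{lem:cod2} directly with $U = K$ (and separately $U = H$) rather than re-deriving the lifts by hand, since that lemma already packages the needed uniqueness of liftings of objects and of isomorphisms.
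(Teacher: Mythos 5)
Your proposal is correct and takes essentially the same route as the paper: both reduce the ``if'' direction to showing that the comparison functor into the honest pullback is invertible, using full fidelity of $G$ together with hypothesis~(2) and the unique-lifting property of the discrete isofibrations $H$ and $K$. The only cosmetic difference is that the paper packages the final step abstractly---the comparison is itself a fully faithful discrete isofibration by cancellativity, and an essentially surjective such is an isomorphism---whereas you verify full fidelity on hom-objects and strictify the object-level bijection by hand.
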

\begin{proof}
  If the square is a pullback, then $F$ is fully faithful as a
  pullback of $G$. As for (2), if $Kb \cong Gc$ in $\D$ then since $K$
  is an isofibration we can find $b \cong b'$ in $\B$ with $Kb' = Gc$;
  now by the pullback property we induce $a \in \A$ with $Fa = b'$ so
  that $b \cong Fa$ as required. Suppose conversely that (1) and (2)
  hold. We form the pullback $\P$ of $K$ along $G$ and the induced map
  $L$ as below.
  \begin{equation}\label{eq:3}
    \cd[@-1.4em]{
      \A \ar@/^1em/[drrr]^-{F} \ar@/_1em/[dddr]_-{H}
      \ar@{-->}[dr]^-{L} \\ &
      \P \pullbackcorner \ar[rr]^-{P} \ar[dd]_-{Q} & & \B \ar[dd]^-{K}
      \\ \\
      & \C \ar[rr]^-{G} & & \D
    }
  \end{equation}
  $P$ is fully faithful as a pullback of $G$, and $F$ is so by
  assumption; whence by standard cancellativity properties of fully
  faithful functors, $L$ is also fully faithful.

  In fact, discrete isofibrations are also stable under pullback, and
  also have the same cancellativity property; this follows from the
  fact that they are the exactly the maps with the unique right lifting
  property against the inclusion of the free $\V$-category on an object
  into the free $\V$-category on an isomorphism. Consequently,
  in~\eqref{eq:3}, $Q$ is a discrete isofibration as a pullback of $K$,
  and $H$ is so by assumption; whence by cancellativity, $L$ is also a
  discrete isofibration.

  If we can now show $L$ is also essentially surjective, we will be
  done: for then $L$ is a discrete isofibration and an equivalence,
  whence invertible. So let $(b,c) \in \P$. Since $Kb = Gc$, by (2) we
  have that $b$ is in the essential image of $F$. So there is
  $a \in \A$ and an isomorphism $\beta \colon b \cong Fa$. Now
  $K\beta \colon Gc = Kb \cong KFa = GHa$ so by full fidelity of $G$
  there is $\gamma \colon c \cong Ha$ with $G\gamma = K\beta$; and so
  we have $(\beta, \gamma) \colon (b,c) \cong La$ exhibiting $(b,c)$ as
  in the essential image of $L$, as required.
\end{proof}

\subsection{$\A$-theories}

We first use the pullback lemma to describe the fixpoints
of~\eqref{eq:adj} on the pretheory side.

\begin{Defn}
  \label{def:1}
  An $\A$-pretheory $J \colon \A \to \T$ is said to be an
  \emph{$\A$-theory} if each $\T(J \thg,a) \in [\A^\mathrm{op}, \V]$
  is a $K$-nerve. We write $\th$ for the full subcategory of $\preth$
  on the $\A$-theories.
\end{Defn}

In the language of Section~\ref{sec:generalised-models} below, a
pretheory $\T$ is an $\A$-theory just when each representable
$\T(\thg,a) \colon \T^\mathrm{op} \to \V$ is a (\emph{non-concrete})
$\T$-model. When $\V = \E = \cat{Set}$ and $\A = \mathbb{F}$, an
$\A$-pretheory is an $\A$-theory precisely when it is a Lawvere
theory; see Examples~\ref{exs:TheoriesDensity}\ref{item:8} below.

\begin{Thm}\label{thm:unit}
  An $\A$-pretheory $J \colon \A \rightarrow \T$ is an $\A$-theory if
  and only if the unit component
  $\eta_\T \colon \T \rightarrow \Phi \Psi \T$ of~\eqref{eq:adj} is
  invertible.
\end{Thm}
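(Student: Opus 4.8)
The strategy is to unwind what $\Phi\Psi\T$ is and compare the unit map $\eta_\T$ with the canonical comparison functor built from the definitions. By Theorem~\ref{thm:adjunction}, $\Psi\T$ is the monad whose category of algebras sits in the pullback square~\eqref{eq:defL}, so $\Alg{\Psi\T} \cong \conc$ over $\E$. Now $\Phi\Psi\T$ is the identity-on-objects part of the (i.o.o., fully faithful) factorisation of $F^{\Psi\T}K \colon \A \to \Alg{\Psi\T}$, that is, the full sub-$\V$-category $\A_{\Psi\T}$ of $\Alg{\Psi\T} \cong \conc$ spanned by the free algebras on objects of $\A$. So the homs of $\Phi\Psi\T$ are $\conc(F_{\Psi\T}Ka, F_{\Psi\T}Kb) \cong \E(Ka, U_{\Psi\T}F_{\Psi\T}Kb) = \E(Ka, \Psi\T(Kb))$. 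The first thing I would do is identify, via the pullback~\eqref{eq:defL} and the strict monadicity established in Theorem~\ref{thm:adjunction0}, the object $\Psi\T(Kb) \in \E$ together with its comparison presheaf: concretely, a free $\Psi\T$-algebra on $Kb$ is a concrete $\T$-model whose underlying presheaf in $[\T^\op,\V]$ I would want to recognise as $\T(\thg, b)$ itself, via the universal property of the free model. This pins down $\Phi\Psi\T(a,b) \cong \T(\thg,b)(Ja) = \T(Ja, b)$, with the unit $\eta_\T$ acting as the evident map on homs.

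**Reformulating via Lemma~\ref{lem:2}.** Rather than chase the homs by hand, the cleaner route is to express $\eta_\T$ being invertible as a pullback condition and invoke Lemma~\ref{lem:2}. The map $\eta_\T \colon \T \to \Phi\Psi\T$ is an identity-on-objects map of pretheories, hence fully faithful iff invertible. I would consider the commuting square obtained by composing $\eta_\T$ (postcomposed into the appropriate presheaf category or algebra category) with the structure maps, arranging a square of the form
\begin{equation*}
  \cd{
    {\T} \ar[r] \ar[d]_{(J^\op)^* \text{-type map}} & {\conc \text{ or } [\T^\op,\V]} \ar[d] \\
    {\A_{\Psi\T} \text{ or } \A} \ar[r] & {[\A^\op, \V] \text{ or } \conc}
  }
\end{equation*}
to which Lemma~\ref{lem:2} applies — the vertical maps being discrete isofibrations (as in Example~\ref{ex:7}, since the relevant functors are restriction along identity-on-objects functors or pullbacks of strictly monadic functors) and one horizontal map being fully faithful. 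The point is that $\eta_\T$ being fully faithful then becomes equivalent to condition (2) of Lemma~\ref{lem:2}, which in turn translates — using that $\Phi\Psi\T$ is by construction the full image of $\A$ in $\Alg{\Psi\T} \cong \conc$ — precisely into the statement that each representable $\T(J\thg, a)$, viewed as a presheaf on $\A$, lies in the image of $N_K$, i.e.\ is a $K$-nerve. That is exactly the defining condition of an $\A$-theory in Definition~\ref{def:1}.

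**The key computation.** The technical heart, and the step I expect to be the main obstacle, is the identification of the free $\Psi\T$-algebra on $Kb$ (equivalently the free concrete $\T$-model on $Kb \in \E$) with the pair $(b$-related object$, \T(\thg,b))$ — more precisely, showing that the underlying $[\T^\op,\V]$-component of the free concrete $\T$-model on $Ka$ is the representable $\T(\thg, a)$ restricted appropriately, so that its image under $N_K$-restriction is $\E(K\thg, ?) $ matching up with $\T(J\thg, a)$. This requires carefully using the strict monadicity of $U_\T$ and the explicit pullback description of $\conc$, together with density of $K$: one computes the left adjoint $F_\T$ on representables by a coend/left-Kan-extension argument, and checks that the counit and the model structure identify the free model's presheaf as the representable. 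Once this is nailed down, the comparison $\eta_\T$ is seen to act on $\conc(F_\T Ka, F_\T Kb) \cong \E(Ka, \Psi\T Kb)$ by the evident restriction $\T(Ja, b) \to \E(Ka, N_K(U_\T F_\T Kb)(\thg)) \cong \dots$, and invertibility of this for all $a$ is visibly the assertion that $\T(J\thg, b)$ is a $K$-nerve — recovering Definition~\ref{def:1}.

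**Organising the proof.** Concretely I would (i) recall the explicit description of $\Phi\Psi\T$ as the full sub-$\V$-category of $\conc$ on the free models $F_\T Ka$; (ii) identify $\eta_\T$ on homs as the canonical map $\T(Ja,b) \to \conc(F_\T Ka, F_\T Kb)$; (iii) compute the latter as $\E(Ka, U_\T F_\T Kb)$ and then, via the pullback~\eqref{eq:defL}/\eqref{eq:15} and the universal property of the free model, as the set of extensions of $\E(K\thg, U_\T F_\T Kb)$ — showing the relevant presheaf underlying $F_\T Kb$ is forced to be (isomorphic to) $\T(\thg, b)$; and (iv) conclude that $\eta_\T$ is an isomorphism precisely when $\T(J\thg, b) \cong N_K(U_\T F_\T Kb) $ for every $b$, i.e.\ when each $\T(J\thg, b)$ is a $K$-nerve. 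The appeal to Lemma~\ref{lem:2} can streamline step (iv) by turning the "is a pullback" characterisation of~\eqref{eq:defL} directly into the essential-image condition; whether to go through Lemma~\ref{lem:2} or argue hom-by-hom is a presentational choice, but either way the crux is the free-model computation in step (iii).
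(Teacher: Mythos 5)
Your overall architecture is sound and is essentially the paper's: identify $\Phi\Psi\T$ as the full subcategory of $\Alg{\mathrm{\Psi}(\T)}\cong\conc$ on the free models $F_\T Ka$, observe that the identity-on-objects $\eta_\T$ is invertible iff fully faithful, and reduce this via the pullback square \eqref{eq:defL} and Lemma~\ref{lem:2} to the condition that each $\T(J\thg,b)$ lies in the essential image of $N_K$. (The paper reaches the same endpoint by chasing $1_{\Psi\T}$ back through the six bijections of Theorem~\ref{thm:adjunction}, identifying the relevant comparison with the pullback projection $P$, and applying Lemma~\ref{lem:2} to the pullback square; your hom-by-hom route is a legitimate alternative presentation of the same argument.)

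However, your ``key computation'' as stated is wrong, and wrong in a way that matters: you assert that the underlying $[\T^\op,\V]$-component of the free concrete $\T$-model on $Kb$ ``is forced to be'' the representable $\T(\thg,b)$, and that a coend/Kan-extension calculation will ``nail this down''. If that held unconditionally, then $\eta_\T$ would always be invertible and every pretheory would be a theory, which is false (and would contradict, e.g., the need for a reflection in Corollary~\ref{cor:1}). What is actually true is that the free concrete model on $Kb$ corresponds, under the equivalence $\conc\simeq\cat{Mod}(\T)$, to the \emph{reflection} of the representable $\T(\thg,b)$ into $\cat{Mod}(\T)\subseteq[\T^\op,\V]$; one checks directly from the universal property that this reflection is invertible --- equivalently, that the representable is itself the free model --- precisely when $\T(J\thg,b)$ is a $K$-nerve. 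So the identification you call the technical heart is not a lemma to be proved in general; it \emph{is} the biconditional of the theorem, and your step (iii) only establishes the ``if'' direction (theory $\Rightarrow$ representables are free models $\Rightarrow$ $\eta_\T$ invertible). For the ``only if'' direction you need a separate (easier) observation, namely that $\A_{\Psi\T}(J_{\Psi\T}\thg,b)\cong\E(K\thg,\Psi\T(Kb))$ is always a $K$-nerve, so invertibility of $\eta_\T$ transports this property to $\T(J\thg,b)$. A second, smaller gap: you take for granted that the adjunction unit $\eta_\T$ acts on homs as ``the evident map''; this is true but requires tracing the definition of $\eta_\T$ through the bijections constructing the adjunction (as the paper does to identify $\alpha_2$ with the pullback projection $P$), since a priori an abstract isomorphism of homs does not show that the particular map $\eta_\T$ is invertible.
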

\begin{proof}
  The unit $\eta_\T \colon \T \rightarrow \Phi \Psi \T$ is obtained by
  starting with $\alpha_{0}=1 \colon \Psi \T \to \Psi \T$ and chasing
  through the bijections of Theorem~\ref{thm:adjunction} to obtain
  $\alpha_{6}=\eta_\T$. Doing this, we quickly arrive at $\alpha_{2}$
  equal to $P$, the projection in the depicted pullback square
  \begin{equation}\label{eq:twodiagrams}
    \cd[@C+0.4em]{
      \Alg{\mathrm{\Psi}\mathnormal{\T}} \ar[r]^-{P} \ar[d]_{U^{\Psi
          \T}} \pullbackcorner &
      [\T^\op, \V] \ar[d]^{[J^\mathrm{op},1]} & 
      \A^\mathrm{op} \ar[d]_{(J_{\Psi \T})^\mathrm{op}}
      \ar[rr]^{J^\mathrm{op}} &&
      \T^\mathrm{op} \ar[d]^{\alpha_{4}} \ar@{.>}[dl]^{{\alpha_5}^\mathrm{op}} \ar@{.>}[dll]_{ \alpha_{6}^\mathrm{op}} \\
      \E \ar[r]^-{N_K} &
      [\A^\op, \V] & 
      (\A_{\Psi \T})^\mathrm{op}
      \ar[r]_-{({K_{\Psi \T}})^\mathrm{op}} &
      (\Alg{\mathrm{\Psi}\mathnormal{\T}})^\mathrm{op} \ar[r]_-{Y} &
      \sh{l}{0.7em}[\Alg{\mathrm{\Psi}\mathnormal{\T}},\V]_{\mathrm{rep}}
    }
  \end{equation}
  defining $\Alg{\mathrm{\Psi}\mathnormal{\T}}$. Now
  $\alpha_{3} \colon \Alg{\mathrm{\Psi}\mathnormal{\T}} \to
  [\T^\mathrm{op}, \V]$ is obtained by lifting an isomorphism through
  $[J^\mathrm{op},1]$ and so we have $\alpha_{3} \cong P$. We obtain
  $\alpha_{4}$ by transposing $\alpha_{3}$ through the isomorphism
  $(\thg)^{t} \colon
  \VCat(\Alg{\mathrm{\Psi}\mathnormal{\T}},[\T^\mathrm{op},\V])_{\mathrm{pwr}}
  \cong
  \VCat(\T^\mathrm{op},[\Alg{\mathrm{\Psi}\mathnormal{\T}},\V]_\mathrm{rep})$
  displayed in~\eqref{eq:33}. The relationships between $\alpha_{4}$,
  $\alpha_{5}$ and the unit component $\eta_\T= \alpha_{6}$ are
  depicted in the commutative diagram above right.

  The identity-on-objects unit $\eta_\T = \alpha_{6}$ will be
  invertible just when it is fully faithful which, since $K_{\Psi \T}$
  is fully faithful, will be so just when $\alpha_{5}$ is fully
  faithful. Now, since
  $P \cong \alpha_3 = (\alpha_4)^{t} = (Y \circ
  \alpha_5^\mathrm{op})^{t} = N_{\alpha_5}$, and $P$ is fully faithful,
  as the pullback of the fully faithful $N_K$, it follows that
  $N_{\alpha_5} \colon \E^{\Psi \T} \rightarrow [\T^\mathrm{op}, \V]$
  is also fully faithful. As a consequence, $\alpha_5$ is fully
  faithful just when there exists a factorisation to within
  isomorphism:
  \begin{equation}\label{eq:38}
    Y \cong N_{\alpha_5} \circ G \colon \T \rightarrow \E^{\Psi \T}
    \rightarrow [\T^\mathrm{op}, \V]\rlap{ .}
  \end{equation}
  Indeed, in one direction, if $\alpha_5$ is fully faithful then the
  canonical natural transformation
  $Y \Rightarrow N_{\alpha_5} \circ \alpha_5$ is invertible. In the
  other, given a factorisation as displayed, $G$ is fully faithful
  since $N_{\alpha_5}$ and $Y$ are. Moreover we have isomorphisms
  \begin{equation*}
    \Alg{\mathrm{\Psi}\mathnormal{\T}}(\alpha_5 b, \thg) \cong
    [\mathnormal{\T}^\mathrm{op},\V](Yb,N_{\alpha_5}\thg) \cong
    [\mathnormal{\T}^\mathrm{op},\V](N_{\alpha_5}Gb,N_{\alpha_5}\thg) \cong \Alg{\mathrm{\Psi}\mathnormal{\T}}(Gb,\thg)
  \end{equation*}
  natural in $b$. So by Yoneda, $\alpha_5 \cong G$ and so $\alpha_5$ is
  fully faithful since $G$ is so.

  This shows that $\eta_T$ is invertible just when there is a
  factorisation~\eqref{eq:38}. Since $N_{\alpha_5}$ is fully faithful
  this in turn is equivalent to asking that each $Yb = \T(\thg, b)$
  lies in the essential image of $\alpha_5$, or equally in the
  essential image of the isomorphic~$P$. As the left square
  of~\eqref{eq:twodiagrams} is a pullback, Lemma~\ref{lem:2} asserts
  that this is, in turn, equivalent to each
  $[J^\mathrm{op}, 1](Yb) = \T(J\thg, b)$ being in the essential image
  of $N_K$; which is precisely the condition that $J$ is an
  $\A$-theory.
\end{proof}

\subsection{$\A$-nervous monads}
\label{sec:a-nervous-monads}

We now characterise the fixpoints on the monad side. In the following
definition, $\A_\mathsf{T}$, $J_\mathsf{T}$ and $K_\mathsf{T}$ are as
in~\eqref{eq:factorisations}.

\begin{Defn}
  \label{def:2}
  A $\V$-monad $\mathsf{T}$ on $\E$ is called $\A$-\emph{nervous} if
  \begin{enumerate}[(i),itemsep=0em]
  \item The fully faithful
    $K_\mathsf{T} \colon \A_{\mathsf{T}} \to \Alg{T}$ is dense;
  \item A presheaf $X \in [{\A_{\mathsf{T}}}^\mathrm{op}, \V]$ is a
    $K_\mathsf{T}$-nerve if and only if
    $X \circ {J_{\mathsf T}}^\mathrm{op}$ is a $K$-nerve.
  \end{enumerate}
  We write $\nerv$ for the full subcategory of $\mnd$ on the
  $\A$-nervous monads.
\end{Defn}

Note that the adjointness isomorphisms
$\Alg{T}(K_{\mathsf T}J_{\mathsf T} X,Y) = \Alg{T}(F^{\mathsf T}KX,Y)
\cong \E(KX,U^{\mathsf T}Y)$ for the adjunction
$F^{\mathsf{T}} \dashv U^{\mathsf{T}}$ give a pseudo-commutative
square
\begin{equation}\label{eq:canIso}
  \cd[@C-1em]{
    \Alg{T} \ar[rr]^-{N_{K_{\mathsf T}}} \ar[d]_{U^{\mathsf T}} \twocong{drr} &&
    [{\A_{\mathsf T}}^\mathrm{op},\V] \ar[d]^{[{J_{\mathsf T}}^\mathrm{op},1]} \\
    \E \ar[rr]^-{N_{K}} &&
    [\A^\mathrm{op},\V]\rlap{ ;}}
\end{equation}
as a result of which, $[J_{\mathsf T}^\mathrm{op},1]$ maps
$K_\mathsf{T}$-nerves to $K$-nerves. Thus the force of clause (ii) of
the preceding definition lies in the \emph{if} direction.

\begin{Thm}
  \label{thm:counit}
  The counit component
  $\varepsilon_{\mathsf{T}} \colon \Psi \Phi \mathsf T \rightarrow
  \mathsf{T}$ of~\eqref{eq:adj} at a monad $\mathsf{T}$ on $\E$ is
  invertible if and only if $\mathsf{T}$ is $\A$-nervous.
\end{Thm}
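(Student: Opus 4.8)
\emph{Proof proposal.} The plan is to dualise the proof of Theorem~\ref{thm:unit}. There, the unit was identified by starting from the identity monad map and running the bijections of Theorem~\ref{thm:adjunction0} in the direction from $\alpha_0$ to $\alpha_6$; here I would identify the counit $\varepsilon_{\mathsf T}$ by starting from the identity pretheory map $1 \colon \Phi\mathsf T \to \Phi\mathsf T$ --- equivalently, from $\alpha_6 = 1_{\A_{\mathsf T}}$ --- and running those same bijections in the reverse direction, taking $\mathsf S = \mathsf T$ and $J = J_{\mathsf T} \colon \A \to \A_{\mathsf T}$ in the notation of~\eqref{eq:factorisations}.

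First I would carry out the chase. Starting from $\alpha_6 = 1_{\A_{\mathsf T}}$, the two orthogonality steps give $\alpha_5 = K_{\mathsf T} \colon \A_{\mathsf T} \to \Alg{T}$ and then the Yoneda composite $\alpha_4 = Y \circ K_{\mathsf T}^{\mathrm{op}}$; transposing through~\eqref{eq:33} yields $\alpha_3 = N_{K_{\mathsf T}} \colon \Alg{T} \to [\A_{\mathsf T}^{\mathrm{op}}, \V]$. Since $F^{\mathsf T}K = K_{\mathsf T}J_{\mathsf T}$, this $\alpha_3$ is a lift of $N_{F^{\mathsf T}K} = [J_{\mathsf T}^{\mathrm{op}},1] \circ N_{K_{\mathsf T}}$ through $[J_{\mathsf T}^{\mathrm{op}},1]$, so by Lemma~\ref{lem:cod2} the corresponding $\alpha_2$ is the strictly commuting lift of $N_K \circ U^{\mathsf T}$ through $[J_{\mathsf T}^{\mathrm{op}},1]$ obtained by transporting $\alpha_3$ along the canonical isomorphism $N_K \circ U^{\mathsf T} \cong N_{F^{\mathsf T}K}$ of~\eqref{eq:canIso}; in particular $\alpha_2 \cong N_{K_{\mathsf T}}$ as functors $\Alg{T} \to [\A_{\mathsf T}^{\mathrm{op}},\V]$. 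Finally $\alpha_1 \colon \Alg{T} \to \E^{\Psi\Phi\mathsf T}$ is the comparison functor induced by the defining pullback~\eqref{eq:defL} (with $\T = \A_{\mathsf T}$) out of the cone $(U^{\mathsf T}, \alpha_2)$, and $\varepsilon_{\mathsf T} = \alpha_0$ is the monad map with $\mathrm{Alg}(\varepsilon_{\mathsf T}) = \varepsilon_{\mathsf T}^{\ast} = \alpha_1$. As $\mathrm{Alg}$ is fully faithful by~\eqref{eq:10}, hence reflects isomorphisms, $\varepsilon_{\mathsf T}$ is invertible precisely when $\alpha_1$ is.

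Second, I would reduce invertibility of $\alpha_1$ to a pullback condition and apply Lemma~\ref{lem:2}. Since $\E^{\Psi\Phi\mathsf T}$ is by construction the strict pullback of $[J_{\mathsf T}^{\mathrm{op}},1]$ along $N_K$ and $\alpha_1$ is the canonical comparison from the cone $(U^{\mathsf T}, \alpha_2)$, the map $\alpha_1$ is an isomorphism if and only if the strictly commuting square with sides $\alpha_2$, $U^{\mathsf T}$, $[J_{\mathsf T}^{\mathrm{op}},1]$, $N_K$ is itself a pullback. Now $N_K$ is fully faithful since $K$ is dense, and $U^{\mathsf T}$ and $[J_{\mathsf T}^{\mathrm{op}},1]$ are discrete isofibrations by Example~\ref{ex:7}, so Lemma~\ref{lem:2} applies: the square is a pullback if and only if (1)~$\alpha_2$ is fully faithful and (2)~a presheaf $X \in [\A_{\mathsf T}^{\mathrm{op}},\V]$ lies in the essential image of $\alpha_2$ exactly when $X \circ J_{\mathsf T}^{\mathrm{op}}$ lies in the essential image of $N_K$. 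Using the isomorphism $\alpha_2 \cong N_{K_{\mathsf T}}$, condition (1) says precisely that $K_{\mathsf T}$ is dense, and --- the essential image of $\alpha_2$ being exactly the $K_{\mathsf T}$-nerves --- condition (2) says precisely that $X$ is a $K_{\mathsf T}$-nerve if and only if $X \circ J_{\mathsf T}^{\mathrm{op}}$ is a $K$-nerve. These are clauses (i) and (ii) of Definition~\ref{def:2}, and the theorem follows.

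The hard part will be the bookkeeping in the second paragraph: correctly identifying each $\alpha_i$ in the reversed chase, and in particular recognising $\alpha_2$ (via Lemma~\ref{lem:cod2}) as a lift of $N_K U^{\mathsf T}$ that is isomorphic, \emph{as such a lift}, to $N_{K_{\mathsf T}}$ --- together with the elementary but essential observation that a comparison map into a strict pullback is invertible exactly when the competing cone is itself a pullback cone. Once this is set up, Lemma~\ref{lem:2} matches Definition~\ref{def:2} on the nose, just as Lemma~\ref{lem:2} did for the pretheory side in Theorem~\ref{thm:unit}.
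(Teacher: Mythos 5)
Your proposal is correct and follows essentially the same route as the paper's own proof: reverse the chain of bijections from the adjunction theorem starting at the identity on $\Phi\mathsf{T}$, identify $\alpha_3 = N_{K_{\mathsf T}}$ and hence $\alpha_2 \cong N_{K_{\mathsf T}}$ via the lifting lemma, reduce invertibility of $\varepsilon_{\mathsf T}$ to the competing square being a pullback, and apply Lemma~\ref{lem:2} to match Definition~\ref{def:2}. Your explicit appeal to full fidelity of $\mathrm{Alg}$ to pass from $\varepsilon_{\mathsf T}$ to $\alpha_1$ is a slightly more careful rendering of a step the paper leaves implicit, but the argument is the same.
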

\begin{proof}
  $\varepsilon_{\mathsf T}$ is obtained by taking
  $\alpha_6 = 1 \colon J_\mathsf{T} \to J_\mathsf{T}$ and proceeding
  in reverse order through the series of six natural isomorphisms in
  the proof of Theorem~\ref{thm:adjunction}. Doing this, we quickly
  reach $\alpha_{3}= N_{K_{\mathsf T}}$. Then
  $\alpha_{2} \colon \Alg{T} \to [(\A_{\mathsf T})^\mathrm{op},\V]$ is
  obtained by lifting the natural isomorphism $\phi$ of
  \eqref{eq:canIso} through the discrete isofibration
  $[J_{\mathsf T}^\mathrm{op},1]$, yielding a commutative square as
  left below.
  \begin{equation}\label{eq:liftIso}
    \cd{
      \Alg{T} \ar[rr]^-{\alpha_{2}} \ar[d]_{U^{\mathsf T}} &&
      [(\A_{\mathsf T})^\mathrm{op},\V] \ar[d]^{[J_{\mathsf T}^\mathrm{op},1]} \\
      \E \ar[rr]^-{N_{K}} &&
      [\A^\mathrm{op},\V]}
    \hspace{1cm}
    \cd{ \Alg{T} \ar[drr]_{U^{\mathsf T}} \ar[rr]^-{\alpha_1} &&
      \E^{\Psi \Phi \mathsf T} \ar[d]^{U^{\Psi \Phi \mathsf T}} \\
      && \E \rlap{.}} 
  \end{equation}
  The map $\alpha_{1} \colon \Alg{T} \to \E^{\Psi \Phi \mathsf T}$ is
  the unique map to the pullback, and
  $\alpha_{0}=\varepsilon_{\mathsf T}$ the corresponding morphism of
  monads. It follows that $\varepsilon_{\mathsf T}$ is invertible if
  and only the square to the left of~\eqref{eq:liftIso} is a pullback.
  Both vertical legs are discrete isofibrations and $N_K$ is fully
  faithful, so by Lemma~\ref{lem:2} this happens just when, firstly,
  $\alpha_2$ is fully faithful, and, secondly,
  $X \in [\A_\mathsf{T}^\mathrm{op}, \V]$ is in the essential image of
  $\alpha_2$ if and only if $XJ_\mathsf{T}$ is a $K$-nerve. But as
  $\alpha_2 \cong N_{K_\mathsf{T}}$, and natural isomorphism does not
  change either full fidelity or essential images, this happens just
  when $\mathsf{T}$ is $\A$-nervous.
\end{proof}

\subsection{The monad--theory equivalence}
\label{sec:monad-theory-equiv}

Putting together the preceding results now yields the main result of
this paper.

\begin{Thm}
  \label{thm:1}
  The adjunction \eqref{eq:adj} restricts to an adjoint equivalence
  \begin{equation}\label{eq:eq}
    \cd[@C+1em]{
      {\nerv} \ar@<-4.5pt>[r]_-{\Phi} \ar@{}[r]|-{\bot} &
      {\th} \ar@<-4.5pt>[l]_-{\Psi}
    }
  \end{equation}
  between the category of $\A$-nervous monads and the category of
  $\A$-theories.
\end{Thm}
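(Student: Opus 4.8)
The plan is to assemble Theorem~\ref{thm:1} from the three preceding results as a completely formal consequence. We already have, from Theorem~\ref{thm:adjunction}, an adjunction $\Psi \dashv \Phi$ between $\mnd$ and $\preth$, with unit $\eta$ and counit $\varepsilon$. A general fact about adjunctions is that the full subcategory of objects at which the unit is invertible and the full subcategory of objects at which the counit is invertible are each carried into the other by the two functors, and the adjunction restricts to an adjoint equivalence between them. So the only work is to identify these two ``fixpoint'' subcategories with $\th$ and $\nerv$ respectively.

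First I would record exactly what the fixpoint subcategories are. By Theorem~\ref{thm:unit}, an $\A$-pretheory $\T$ lies in $\th$ precisely when $\eta_\T$ is invertible; by Theorem~\ref{thm:counit}, a monad $\mathsf T$ lies in $\nerv$ precisely when $\varepsilon_\mathsf{T}$ is invertible. So $\th$ is the full subcategory of $\preth$ of objects at which the unit is invertible, and $\nerv$ is the full subcategory of $\mnd$ of objects at which the counit is invertible.

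Next I would invoke the standard adjunction argument. The triangle identities give $\Phi\varepsilon_\mathsf{T} \circ \eta_{\Phi \mathsf T} = \id_{\Phi \mathsf T}$, so $\eta_{\Phi \mathsf T}$ is a split monomorphism for every $\mathsf T$; and when $\varepsilon_\mathsf{T}$ is invertible, $\Phi\varepsilon_\mathsf{T}$ is too, hence $\eta_{\Phi \mathsf T}$ is invertible, i.e.\ $\Phi$ carries $\nerv$ into $\th$. Symmetrically, the other triangle identity $\varepsilon_{\Psi \T} \circ \Psi\eta_\T = \id_{\Psi \T}$ shows that $\Psi$ carries $\th$ into $\nerv$. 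Restricting $\Phi$ and $\Psi$ to these subcategories therefore gives functors $\Phi \colon \nerv \to \th$ and $\Psi \colon \th \to \nerv$, still adjoint (an adjunction restricts to any pair of full subcategories closed in this way, since hom-sets and unit/counit components are inherited). On $\th$ the unit is invertible by definition, and on $\nerv$ the counit is invertible by definition, so this restricted adjunction is an adjoint equivalence, which is exactly~\eqref{eq:eq}.

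I do not expect any real obstacle here: all the substantive content has been isolated into Theorems~\ref{thm:adjunction}, \ref{thm:unit} and~\ref{thm:counit}, and what remains is the routine ``an adjunction restricts to an equivalence on its fixpoints'' lemma together with the bookkeeping that $\th$ and $\nerv$ were \emph{defined} (via those two theorems' characterisations) to be precisely the fixpoint subcategories. The one point deserving a word of care is that $\th \hookrightarrow \preth$ and $\nerv \hookrightarrow \mnd$ are full subcategory inclusions, so that $\Phi$ and $\Psi$ genuinely restrict as functors and the adjunction data restrict with them; this is immediate from Definitions~\ref{def:1} and~\ref{def:2}.
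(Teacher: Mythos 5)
Your proposal is correct and takes essentially the same route as the paper: the paper's proof is a one-sentence appeal to the fact that any adjunction restricts to an adjoint equivalence between the objects with invertible unit and counit, combined with Theorems~\ref{thm:unit} and~\ref{thm:counit} identifying those objects as the $\A$-theories and $\A$-nervous monads. You merely spell out the standard triangle-identity argument that the paper leaves implicit, and that argument is carried out correctly.
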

\begin{proof}
  Any adjunction restricts to an adjoint equivalence between the
  objects with invertible unit and counit components respectively, and
  Theorems~\ref{thm:unit} and~\ref{thm:counit} identify these objects
  as the $\A$-theories and the $\A$-nervous monads.
\end{proof}

Note that there is an asymmetry between the conditions found on each
side. On the one hand, the condition characterising the $\A$-theories
among the $\A$-pretheories is intrinsic, and easy to check in
practice. On the other hand, the condition defining an $\A$-nervous
monad refers to the associated pretheory, and is non-trivial to check
in practice. Indeed, one of the main points
of~\cite{Weber2007Familial, Berger2012Monads} is to provide a general
set of \emph{sufficient} conditions under which a monad can be shown
to be $\A$-nervous.

In the sections which follow, we will provide a number of more
tractable characterisations of the $\A$-theories and $\A$-nervous
monads; the crucial fact which drives all of these is that the
adjunction~\eqref{eq:adj} is in fact idempotent. Recall that an
adjunction $L \dashv R \colon \D \rightarrow \C$ is \emph{idempotent}
if the monad $RL$ on $\C$ is idempotent, and that this is equivalent
to asking that the comonad $LR$ is idempotent, or that any one of the
natural transformations $R\varepsilon$, $\varepsilon L$, $\eta R$ and
$L \eta$ is invertible.

\begin{Thm}\label{cor:idempotent}
  The adjunction~\eqref{eq:adj} is idempotent.
\end{Thm}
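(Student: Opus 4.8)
The plan is to invoke the criterion recalled just above: the adjunction $\Psi \dashv \Phi$ of~\eqref{eq:adj} is idempotent as soon as any one of the whiskered natural transformations $\Phi\varepsilon$, $\varepsilon\Psi$, $\eta\Phi$, $\Psi\eta$ is invertible. I would verify that $\eta\Phi$ is invertible; that is, that the component $\eta_{\Phi\mathsf T} \colon \Phi\mathsf T \to \Phi\Psi\Phi\mathsf T$ is an isomorphism for every $\V$-monad $\mathsf T$ on $\E$. Applying Theorem~\ref{thm:unit} to the pretheory $\Phi\mathsf T = (J_{\mathsf T}\colon\A\to\A_{\mathsf T})$, this is equivalent to the assertion that $\Phi\mathsf T$ is always an $\A$-theory; in other words, that each presheaf $\A_{\mathsf T}(J_{\mathsf T}\thg, a) \in [\A^{\mathrm{op}},\V]$ is a $K$-nerve.

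This last point I would dispatch by a direct computation of the homs of $\A_{\mathsf T}$. Viewing $\A_{\mathsf T}$ as the full subcategory of $\Alg{T}$ spanned by the free algebras via the fully faithful $K_{\mathsf T}$ of~\eqref{eq:factorisations}, and using the identity $K_{\mathsf T}J_{\mathsf T} = F^{\mathsf T}K$ together with the adjunction $F^{\mathsf T}\dashv U^{\mathsf T}$, there are isomorphisms
\begin{equation*}
  \A_{\mathsf T}(J_{\mathsf T}\thg, a) \;\cong\; \Alg{T}(F^{\mathsf T}K\thg, K_{\mathsf T}a) \;\cong\; \E(K\thg, U^{\mathsf T}K_{\mathsf T}a) \;=\; N_K(U^{\mathsf T}K_{\mathsf T}a)
\end{equation*}
of presheaves on $\A$, natural in $a \in \A_{\mathsf T}$. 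Equivalently, this is exactly what one obtains by evaluating the canonical isomorphism $[J_{\mathsf T}^{\mathrm{op}},1]\circ N_{K_{\mathsf T}} \cong N_K\circ U^{\mathsf T}$ of~\eqref{eq:canIso} at the object $K_{\mathsf T}a \in \Alg{T}$. Either way, $\A_{\mathsf T}(J_{\mathsf T}\thg,a)$ visibly lies in the essential image of $N_K$, so $\Phi\mathsf T$ is an $\A$-theory.

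It then follows from Theorem~\ref{thm:unit} that $\eta_{\Phi\mathsf T}$ is invertible for every $\mathsf T$, hence $\eta\Phi$ is invertible and the adjunction is idempotent. I do not anticipate a genuine obstacle here: the argument is essentially formal, and the only place demanding a little care is keeping straight the two incarnations of $\A_{\mathsf T}$---as the full subcategory of the Kleisli $\V$-category $\Kl{T}$ on the objects of $\A$, and as the full subcategory of $\Alg{T}$ on the free algebras $F^{\mathsf T}Ka$---so that~\eqref{eq:canIso} may legitimately be brought to bear. (One could instead check that $\Phi\varepsilon$ is invertible, but this would mean rerunning the lengthier fixpoint analysis of Theorem~\ref{thm:counit} on the monad side, so routing the proof through the theory side via Theorem~\ref{thm:unit} is the economical choice.)
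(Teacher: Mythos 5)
Your proposal is correct and follows essentially the same route as the paper's proof: both verify that $\eta\Phi$ is invertible by showing, via Theorem~\ref{thm:unit}, that each $\Phi\mathsf T$ is an $\A$-theory, and both establish this through the same chain of isomorphisms $\A_{\mathsf T}(J_{\mathsf T}\thg, J_{\mathsf T}a) \cong \Alg{T}(F^{\mathsf T}K\thg, F^{\mathsf T}Ka) \cong \E(K\thg, TKa) = N_K(TKa)$.
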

\begin{proof}
  We show for each $\mathsf{T} \in \mnd$ that the unit
  $\eta_{\Phi \mathsf{T}} \colon \Phi \mathsf{T} \to \Phi \Psi \Phi
  \mathsf{T}$ is invertible. By Theorem~\ref{thm:unit}, this is
  equally to show that $J_{\mathsf T} \colon \A \to \A_{\mathsf T}$ is
  an $\A$-theory, i.e., that each
  $\A_\mathsf{T}(J_\mathsf{T}\thg, J_\mathsf{T}a) \in [\A^\mathrm{op},
  \V]$ is a $K$-nerve. But
  $\A_{\mathsf T}(J_{\mathsf T}\thg, J_\mathsf{T}a) \cong
  \Alg{T}(F^{\mathsf T}K\thg ,F^{\mathsf T}Ka) \cong
  \E(K\thg,U^{\mathsf T}F^{\mathsf T}Ka) = \E(K\thg, TKa)$ as
  required.
\end{proof}
Exploiting the alternative characterisations of idempotent adjunctions
listed above, we immediately obtain the following result, which tells
us in particular that a monad $\mathsf{T}$ is $\A$-nervous if and only
if it can be presented by some $\A$-pretheory.
\begin{Cor}
  \label{cor:2}
  A monad $\mathsf{T}$ on $\E$ is $\A$-nervous if and only if
  $\mathsf{T} \cong \Psi \T$ for some $\A$-pretheory
  $J \colon \A \rightarrow \T$; while an $\A$-pretheory
  $J \colon \A \rightarrow \T$ is an $\A$-theory if and only if
  $\T \cong \Phi\mathsf{T}$ for some monad $\mathsf{T}$ on $\E$.
\end{Cor}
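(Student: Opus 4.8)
The plan is to derive this directly from the idempotency of the adjunction~\eqref{eq:adj} established in Theorem~\ref{cor:idempotent}, combined with the fixpoint characterisations of Theorems~\ref{thm:unit} and~\ref{thm:counit}. The general fact I would invoke is that, for an idempotent adjunction $L \dashv R \colon \D \to \C$, the objects of $\C$ at which the unit is invertible are precisely those lying in the essential image of $R$, and dually the objects of $\D$ at which the counit is invertible are precisely those in the essential image of $L$. Granting this, the Corollary is immediate: a monad $\mathsf{T}$ on $\E$ is $\A$-nervous iff $\varepsilon_{\mathsf T}$ is invertible (Theorem~\ref{thm:counit}) iff $\mathsf T$ is in the essential image of $\Psi$, i.e.\ $\mathsf{T} \cong \Psi\T$ for some $\A$-pretheory $\T$; and dually an $\A$-pretheory $\T$ is an $\A$-theory iff $\eta_\T$ is invertible (Theorem~\ref{thm:unit}) iff $\T \cong \Phi\mathsf{T}$ for some monad $\mathsf{T}$.

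First I would recall, briefly, the proof of the general fact. If $c \cong Rd$, then $\eta_c$ is conjugate via naturality to $\eta_{Rd}$, which is invertible since $\eta R$ is one of the natural transformations made invertible by idempotency; hence $\eta_c$ is invertible. Conversely, if $\eta_c \colon c \to RLc$ is invertible then $c \cong RLc$ exhibits $c$ in the essential image of $R$. The dual statement for the counit follows the same way, now using that $\varepsilon L$ is invertible.

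The only point requiring any care — and it is a minor one — is that in the assertion ``$\mathsf{T} \cong \Psi\T$ for some $\A$-pretheory $\T$'', the pretheory $\T$ is \emph{not} assumed to be an $\A$-theory; so one genuinely needs that $\varepsilon$ is invertible at \emph{every} object of the form $\Psi\T$, which is exactly the statement that $\varepsilon\Psi$ is invertible, one of the equivalent formulations of idempotency recalled before Theorem~\ref{cor:idempotent}. The transfer of invertibility of $\varepsilon_{\mathsf T}$ (resp.\ $\eta_\T$) along an isomorphism of monads (resp.\ of pretheories) is routine naturality.

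In short, there is essentially no obstacle here: the substantive work has already been done in Theorems~\ref{thm:unit}, \ref{thm:counit} and~\ref{cor:idempotent}, and this Corollary is just the translation of ``object at which the unit/counit is invertible'' into ``object in the essential image of $\Phi$/$\Psi$'' afforded by idempotency.
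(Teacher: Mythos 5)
Your proposal is correct and follows essentially the same route as the paper, which likewise deduces the Corollary immediately from the idempotency of the adjunction (Theorem~\ref{cor:idempotent}) together with the fixpoint characterisations of Theorems~\ref{thm:unit} and~\ref{thm:counit}, via the standard fact that for an idempotent adjunction the unit-fixpoints are the essential image of the right adjoint and the counit-fixpoints the essential image of the left adjoint. You merely spell out the routine verification of that standard fact, which the paper leaves implicit.
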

The next result also follows directly from the definition of
idempotent adjunction.
\begin{Cor}
  \label{cor:1}
  The full subcategory $\cat{Mnd}_{\A}(\E) \subseteq \mnd$ is
  coreflective via $\Psi\Phi$, while the full subcategory
  $\cat{Th}_{A}(\E) \subseteq \preth$ is reflective via $\Phi \Psi$.
\end{Cor}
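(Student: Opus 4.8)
The plan is to derive this directly from the idempotency of the adjunction~\eqref{eq:adj} established in Theorem~\ref{cor:idempotent}, together with the standard formal theory of idempotent adjunctions. Recall the general fact: for an idempotent adjunction $L \dashv R \colon \D \to \C$ with unit $\eta$ and counit $\varepsilon$, the full subcategory of $\C$ spanned by those $c$ with $\eta_c$ invertible is reflective, with reflector $RL$; the relevant point is that idempotency makes $\eta_{RLc}$ invertible, so $RLc$ lies in the subcategory, and $\eta_c \colon c \to RLc$ then serves as the reflection unit, its universal property being inherited from that of $\eta_c$ along the original adjunction. Dually, the full subcategory of $\D$ on those $d$ with $\varepsilon_d$ invertible is coreflective via $LR$, with coreflection counit $\varepsilon_d$.

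First I would instantiate this general fact with $L = \Psi$, $R = \Phi$, $\C = \preth$ and $\D = \mnd$. The resulting reflective subcategory of $\preth$ is the full subcategory of $\A$-pretheories at which the unit $\eta$ of~\eqref{eq:adj} is invertible, which by Theorem~\ref{thm:unit} is exactly $\th$, and its reflector is $RL = \Phi \Psi$. Dually, the resulting coreflective subcategory of $\mnd$ is the full subcategory of monads at which the counit $\varepsilon$ of~\eqref{eq:adj} is invertible, which by Theorem~\ref{thm:counit} is exactly $\nerv$, and its coreflector is $LR = \Psi \Phi$. This is precisely the assertion of the corollary.

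The only step requiring any attention is the bookkeeping that identifies the abstract ``fixpoint'' subcategories appearing in the general statement with the concretely defined $\th$ and $\nerv$ of Definitions~\ref{def:1} and~\ref{def:2} — but this identification is exactly what Theorems~\ref{thm:unit} and~\ref{thm:counit} supply, so no genuine obstacle remains. I expect the whole argument to occupy only a couple of lines.
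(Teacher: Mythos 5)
Your proposal is correct and follows exactly the paper's route: the paper likewise derives the corollary directly from the idempotency of the adjunction~\eqref{eq:adj} (Theorem~\ref{cor:idempotent}), with Theorems~\ref{thm:unit} and~\ref{thm:counit} identifying the fixpoint subcategories as $\th$ and $\nerv$. Your instantiation $L = \Psi$, $R = \Phi$ and the resulting (co)reflectors $\Phi\Psi$ and $\Psi\Phi$ are all as in the paper.
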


\section{Semantics}\label{sec:semantics}

In the next section, we will explicitly identify the $\A$-nervous
monads and $\A$-theories for the examples listed in
Section~\ref{sec:setting}. Before doing this, we study further aspects
of the general theory, namely those related to the taking of
semantics.

\subsection{Interaction with the semantics functors}
\label{sec:inter-with-adjunct}

We begin by examining the interaction of our monad--theory
correspondence with the semantics functors of
Section~\ref{sec:monads-pretheories}. In fact, we begin at the level
of the monad--pretheory adjunction~\eqref{eq:adj}.

\begin{Prop}\label{prop:semtheory}
  There is a natural isomorphism $\theta$ as on the left in:
  \begin{equation*}
    \cd[@!C@C-3.5em]{
      {\preth^\mathrm{op}} \ar[rr]^{\Psi^\mathrm{op}}
      \ar[dr]_-{\mathrm{Mod}_c} &
      \ltwocello[0.5]{d}{\theta} &
      {\mnd^\mathrm{op}} \ar[dl]^-{\mathrm{Alg}} & & 
      {\mnd^\mathrm{op}} \ar[rr]^{\Phi^\mathrm{op}} \ar[dr]_-{\mathrm{Alg}} &
      \rtwocell[0.5]{d}{\bar\theta} &
      {\preth^\mathrm{op}} \ar[dl]^-{\mathrm{Mod}_c} \\ &
      {\V\text-\cat{CAT}/\E} & & & &
      {\V\text-\cat{CAT}/\E}\rlap{ .}
    }
  \end{equation*}
  Let $\bar\theta$ be its mate under the adjunction
  $\Phi^\mathrm{op} \dashv \Psi^\mathrm{op}$, as right above. The
  component of $\bar \theta$ at $\mathsf{T} \in \mnd$ is invertible if
  and only if $\mathsf{T}$ is $\A$-nervous.
\end{Prop}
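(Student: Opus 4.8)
The plan is to take $\theta$ to be the family of comparison isomorphisms already supplied by Theorem~\ref{thm:adjunction}, verify that it is natural, and then compute its mate directly. For the first point: at a pretheory $J\colon\A\to\T$, Theorem~\ref{thm:adjunction} furnishes a canonical isomorphism $\E^{\Psi\T}\cong\conc$ over $\E$---indeed the pullback square~\eqref{eq:defL} presenting $\E^{\Psi\T}$ is literally the pullback square~\eqref{eq:15} defining $\conc$---so I take $\theta_\T$ to be this comparison, the component at $\T$ of a transformation $\theta\colon\mathrm{Alg}\circ\Psi^{\mathrm{op}}\Rightarrow\mathrm{Mod}_c$ (its orientation being immaterial, since $\theta$ is an isomorphism).

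Naturality in $\T$ is the one point that takes a little work, and it is essentially latent in the construction of the adjunction~\eqref{eq:adj}. The first two of the six natural bijections used to prove Theorem~\ref{thm:adjunction0} identify, naturally in $\mathsf S\in\mnd$ and in $\T\in\preth$, the hom-set $\V\text-\cat{CAT}/\E(\mathrm{Alg}\,\mathsf S,\mathrm{Mod}_c\,\T)$ with $\mnd(\Psi\T,\mathsf S)$, and hence---as $\mathrm{Alg}\colon\mnd^{\mathrm{op}}\hookrightarrow\V\text-\cat{CAT}/\E$ is fully faithful by~\eqref{eq:10}---with $\V\text-\cat{CAT}/\E(\mathrm{Alg}\,\mathsf S,\mathrm{Alg}\,\Psi\T)$. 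Thus $\mathrm{Mod}_c$ and $\mathrm{Alg}\circ\Psi^{\mathrm{op}}$ corepresent naturally isomorphic $\Set$-valued functors along $\mathrm{Alg}$, and since they are already known to agree objectwise, a Yoneda argument makes the $\theta_\T$ natural in $\T$. (Alternatively one checks naturality by hand: for a pretheory map $H\colon\T\to\S$ the two composites around the relevant square are functors $\conc[\S]\to\conc$ over the monadic $U_\T$, hence are determined by their components into $[\T^{\mathrm{op}},\V]$, and these agree because $\Psi$ and $\mathrm{Mod}_c$ act on $H$ compatibly with restriction of presheaves along $H^{\mathrm{op}}$.)

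For the statement about $\bar\theta$ I would just unwind the mate. Writing $\varepsilon\colon\Psi\Phi\Rightarrow\id$ for the counit of $\Psi\dashv\Phi$, the unit of the opposite adjunction $\Phi^{\mathrm{op}}\dashv\Psi^{\mathrm{op}}$ is $\varepsilon^{\mathrm{op}}$, and the mate of $\theta\colon\mathrm{Alg}\,\Psi^{\mathrm{op}}\Rightarrow\mathrm{Mod}_c$ is the transformation $\mathrm{Alg}\Rightarrow\mathrm{Mod}_c\,\Phi^{\mathrm{op}}$ obtained by whiskering $\theta$ with $\Phi^{\mathrm{op}}$ and precomposing with $\mathrm{Alg}$ whiskered by $\varepsilon^{\mathrm{op}}$; explicitly, its component at $\mathsf T\in\mnd$ is the composite
\[
  \Alg{T}\ \xrightarrow{\ \varepsilon_{\mathsf T}^{\ast}\ }\ \E^{\Psi\Phi\mathsf T}\ \xrightarrow{\ \theta_{\Phi\mathsf T}\ }\ \conc[\Phi\mathsf T]\rlap{ ,}
\]
in which $\varepsilon_{\mathsf T}^{\ast}=\mathrm{Alg}(\varepsilon_{\mathsf T})$ is the functor restricting $\mathsf T$-algebras to $\Psi\Phi\mathsf T$-algebras along the monad map $\varepsilon_{\mathsf T}\colon\Psi\Phi\mathsf T\to\mathsf T$.

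The conclusion is then immediate: $\theta_{\Phi\mathsf T}$ is invertible for every $\mathsf T$, so $\bar\theta_{\mathsf T}$ is invertible if and only if $\varepsilon_{\mathsf T}^{\ast}$ is; and since $\mathrm{Alg}\colon\mnd^{\mathrm{op}}\hookrightarrow\V\text-\cat{CAT}/\E$ is fully faithful, $\varepsilon_{\mathsf T}^{\ast}=\mathrm{Alg}(\varepsilon_{\mathsf T})$ is invertible if and only if the monad map $\varepsilon_{\mathsf T}$ is, which by Theorem~\ref{thm:counit} is if and only if $\mathsf T$ is $\A$-nervous. I expect the only genuinely non-formal point to be the naturality of $\theta$, and even that is mild---a repackaging of the construction of the monad--pretheory adjunction; everything else is the routine mate computation together with appeals to full faithfulness of $\mathrm{Alg}$ and to Theorem~\ref{thm:counit}.
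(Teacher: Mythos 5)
Your proposal is correct and follows essentially the same route as the paper: take $\theta$ to be the comparison isomorphisms $\E^{\Psi\T}\cong\conc$ supplied by Theorem~\ref{thm:adjunction}, compute the mate as $\theta_{\Phi\mathsf T}\circ(\varepsilon_{\mathsf T})^\ast$, and conclude via full faithfulness of $\mathrm{Alg}$ and Theorem~\ref{thm:counit}. Your extra care over naturality of $\theta$ (which the paper leaves implicit) and your indexing of the $\theta$-component as $\theta_{\Phi\mathsf T}$ are both sound.
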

\begin{proof}
  For the first claim, Theorem~\ref{thm:adjunction} provides the
  necessary natural isomorphisms
  $\theta_\T \colon \E^{\Psi \T} \rightarrow \conc$ over $\E$. For the
  second, if we write as before
  $\varepsilon_{\mathsf{T}} \colon \Psi \Phi \mathsf T \rightarrow
  \mathsf{T}$ for the counit component of~\eqref{eq:adj} at
  $\mathsf{T} \in \mnd$, then the $\mathsf{T}$-component of
  $\bar\theta$ is the composite
  $\theta_{\Psi \mathsf T} \circ (\varepsilon_{\mathsf{T}})^\ast
  \colon \E^{\mathsf{T}} \rightarrow \E^{\Psi\Phi\mathsf{T}}
  \rightarrow \conc[\Phi\T]$ over $\E$. Since $\theta_{\Psi \T}$ is
  invertible and since $\mathrm{Alg}$ is fully faithful,
  ${\bar\theta}_\mathsf{T}$ will be invertible just when
  $\varepsilon_\mathsf{T}$ is so; that is, by
  Theorem~\ref{thm:counit}, just when $\mathsf{T}$ is $\A$-nervous.
\end{proof}

From this and the fact that each monad $\Psi \T$ is $\A$-nervous, it
follows that \emph{an $\A$-pretheory $\T$ and its associated theory
  $\Phi\Psi \T$ have isomorphic categories of concrete models}. By
contrast, the passage from a monad $\mathsf{T}$ to its $\A$-nervous
coreflection $\Psi \Phi \mathsf{T}$ may well change the category of
algebras. For example, the power-set monad on $\cat{Set}$, whose
algebras are complete lattices, has its $\mathbb{F}$-nervous
coreflection given by the finite-power-set monad, whose algebras are
$\vee$-semilattices. However, if we restrict to $\A$-nervous monads
and $\A$-theories, then the situation is much better behaved.

\begin{Thm}
  \label{cor:5}
  The monad--theory equivalence~\eqref{eq:eq} commutes with the
  semantics functors; that is, we have natural isomorphisms:
  \begin{equation}\label{eq:34}
    \cd[@!C@C-3.5em]{
      {\th^\mathrm{op}} \ar[rr]^{\Psi^\mathrm{op}} \ar[dr]_-{\mathrm{Mod}_c} &
      \ltwocello[0.4]{d}{\theta} &
      {\nerv^\mathrm{op}} \ar[dl]^-{\mathrm{Alg}} & & 
      {\nerv^\mathrm{op}} \ar[rr]^{\Phi^\mathrm{op}} \ar[dr]_-{\mathrm{Alg}} &
      \rtwocell[0.4]{d}{\bar\theta} &
      {\th^\mathrm{op}} \ar[dl]^-{\mathrm{Mod}_c} \\ &
      {\V\text-\cat{CAT}/\E} & & & &
      {\V\text-\cat{CAT}/\E}\rlap{ .}
    }
  \end{equation}
  Moreover, both
  $\mathrm{Mod}_c\colon \th^\mathrm{op} \rightarrow
  \V\text-\cat{CAT}/\E$ and
  $\mathrm{Alg} \colon \nerv^\mathrm{op} \rightarrow \V\text-\cat{CAT}
  / \E$ are fully faithful functors.
\end{Thm}
\begin{proof}
  The first statement follows from Proposition~\ref{prop:semtheory}.
  For the second, note that
  $\mathrm{Alg} \colon {\nerv}^\mathrm{op} \to \VCat/\E$ is obtained
  by restricting the fully faithful
  $\mathrm{Alg} \colon {\mnd}^\mathrm{op} \to \VCat/\E$ along a full
  embedding, and so is itself fully faithful. It follows that
  $\mathrm{Mod}_c \cong \mathrm{Alg} \circ \Psi^\mathrm{op} \colon
  \th^\mathrm{op} \rightarrow \VCat / \E$ is also fully faithful.
\end{proof}

Full fidelity of
$\mathrm{Mod}_c \colon \th^\mathrm{op} \rightarrow \VCat/\E$ means
that an $\A$-theory is determined to within isomorphism by its
category of concrete models over $\E$. This rectifies the
non-uniqueness of pretheories noted in Example~\ref{ex:6} above.

\subsection{Non-concrete models}
\label{sec:generalised-models}

In Section~\ref{sec:pretheories} we defined a \emph{concrete model} of
an $\A$-pretheory $\T$ to be an object $X \in \E$ endowed with an
extension of $\E(K\thg, X) \colon \A^\mathrm{op} \rightarrow \V$ to a
functor $\T^\mathrm{op} \rightarrow \V$. In the literature, one often
encounters a looser notion of model for a theory, in which an
underlying object in $\E$ is not provided. In our setting, this notion
becomes the following one: by an (unqualified) \emph{$\T$-model}, we
mean a functor $F \colon \T^\mathrm{op} \rightarrow \V$ whose
restriction $FJ^\mathrm{op} \colon \A^\mathrm{op} \rightarrow \V$ is a
$K$-nerve.

The $\T$-models span a full sub-$\V$-category $\cat{Mod}(\T)$ of
$[\T^\mathrm{op}, \V]$. Recalling from Section~\ref{sec:setting} that
$\KN(\V)$ denotes the full sub-$\V$-category of $[\A^\mathrm{op}, \V]$
on the $K$-nerves, we may also express $\cat{Mod}(\T)$ as a pullback
as to the right in:
\begin{equation}\label{eq:9}
  \cd{
    {\conc} \ar@/^1.1em/[rr]^-{P_\T} \ar@{-->}[r]_-{} \ar[d]_{U_\T} &
    {\cat{Mod}(\T)} \ar@{ (->}[r] \ar[d]_{W_{\T}} &
    {[\T^\mathrm{op}, \V]} \ar[d]^{[J^\mathrm{op}, 1]} \\
    {\E} \ar[r]^-{N_K} &
    {\KN(\V)} \ar@{ (->}[r] &
    {[\A^\mathrm{op}, \V]}\rlap{ .}
  }
\end{equation}
On the other hand, $\conc$ is the pullback around the outside, and so
there is a canonical induced map $\conc \rightarrow \cat{Mod}(\T)$ as
displayed. By the usual cancellativity properties, the left square
above is now also a pullback. Moreover, $W_\T$ is an isofibration, as
a pullback of the discrete isofibration $[J^\mathrm{op}, 1]$, and
$N_K \colon \E \rightarrow \KN(\V)$ is an equivalence. Since
equivalences are stable under pullback along isofibrations, we
conclude that:
\begin{Prop}
  \label{prop:1}
  The comparison $\conc \rightarrow \cat{Mod}(\T)$
  in~\eqref{eq:9} is an equivalence.
\end{Prop}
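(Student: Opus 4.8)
The statement to prove is Proposition~\ref{prop:1}: the canonical comparison $\conc \to \cat{Mod}(\T)$ in diagram~\eqref{eq:9} is an equivalence of $\V$-categories. The plan is to read off everything from the pullback structure already set up in~\eqref{eq:9} together with two standard stability facts about pullbacks in $\VCat$: first, that the class of isofibrations is stable under pullback; and second, that equivalences are stable under pullback along isofibrations. Both of these will be invoked without proof, as they are folklore (and the second is precisely the kind of fact used already in the proof of Lemma~\ref{lem:2}).

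First I would recall the layout of~\eqref{eq:9}. The outer rectangle, with top edge $P_\T \colon \conc \to [\T^\op,\V]$ and bottom edge $N_K \colon \E \to [\A^\op,\V]$, is a pullback by the very definition of $\conc$ in~\eqref{eq:15}. The right-hand square, with vertical legs $W_\T$ and $[J^\op,1]$, is a pullback by construction (it defines $\cat{Mod}(\T)$ as the full subcategory of $[\T^\op,\V]$ on those $F$ with $FJ^\op$ a $K$-nerve). Hence, by the standard pasting law for pullbacks, the left-hand square — with top edge the comparison map $\conc \to \cat{Mod}(\T)$, bottom edge the (corestricted) $N_K \colon \E \to \KN(\V)$, and vertical legs $U_\T$ and $W_\T$ — is also a pullback. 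This is exactly the ``by the usual cancellativity properties'' remark preceding the statement; I would simply make it explicit.

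Next I would identify the two maps along the bottom-right of this left square. The map $[J^\op,1] \colon [\T^\op,\V]\to[\A^\op,\V]$ is a discrete isofibration by Example~\ref{ex:7} (since $J^\op$ is identity-on-objects), so its pullback $W_\T \colon \cat{Mod}(\T)\to\KN(\V)$ is an isofibration by pullback-stability of isofibrations. On the other hand, $N_K \colon \E \to \KN(\V)$ is an equivalence: $K$ is dense by hypothesis, so by the discussion in Section~\ref{sec:setting} the nerve $N_K$ is fully faithful, and it is essentially surjective onto $\KN(\V)$ by the definition of $\KN(\V)$ as the essential image of $N_K$. Therefore the left square exhibits the comparison $\conc \to \cat{Mod}(\T)$ as the pullback of the equivalence $N_K$ along the isofibration $W_\T$; by stability of equivalences under pullback along isofibrations, the comparison is an equivalence, as required.

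The only point requiring any care is the pasting step producing the left-hand pullback, and the bookkeeping that the composite of the two top edges really is $P_\T$ and that the left square's bottom edge is the corestriction of $N_K$ through $\KN(\V)$ — both of which are immediate from how the maps in~\eqref{eq:9} were defined, so there is no genuine obstacle here. I do not anticipate a hard part; the proof is essentially a diagram-chase assembled from results already in the excerpt.
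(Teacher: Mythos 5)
Your proposal is correct and follows essentially the same route as the paper: the left square of~\eqref{eq:9} is a pullback by pasting/cancellation, $W_\T$ is an isofibration as a pullback of the discrete isofibration $[J^\mathrm{op},1]$, $N_K \colon \E \to \KN(\V)$ is an equivalence, and equivalences are stable under pullback along isofibrations. The only difference is that you spell out the cancellativity step and the reason $N_K$ is an equivalence onto $\KN(\V)$, which the paper leaves implicit.
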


Taking non-concrete models gives rise to a semantics functor landing
in $\VCat/\KN(\V)$ which, like before, is \emph{not} fully faithful on
$\A$-pretheories, but is so on the subcategory of $\A$-theories. Note
that the ``underlying $K$-nerve'' of a $\T$-model is more natural than
it might seem, being the special case of the functor
$\cat{Mod}(\T) \rightarrow \cat{Mod}(\S)$ induced by a morphism of
$\A$-pretheories for which $\S$ is the initial pretheory. However, in
the following result, for simplicity, we view the semantics functors
for $\T$-models as landing simply in $\VCat$.

\begin{Thm}
  \label{thm:9}
  The monad--theory equivalence~\eqref{eq:eq} commutes with the
  non-concrete semantics functors in the sense that we have natural
  transformations
  \begin{equation*}
    \cd[@!C@C-3.5em]{
      {\th^\mathrm{op}} \ar[rr]^{\Psi^\mathrm{op}} \ar[dr]_-{\mathrm{Mod}} &
      \ltwocello[0.4]{d}{\theta} &
      {\nerv^\mathrm{op}} \ar[dl]^-{\mathrm{Alg}} & & 
      {\nerv^\mathrm{op}} \ar[rr]^{\Phi^\mathrm{op}} \ar[dr]_-{\mathrm{Alg}} &
      \rtwocell[0.4]{d}{\bar\theta} &
      {\th^\mathrm{op}} \ar[dl]^-{\mathrm{Mod}} \\ &
      {\V\text-\cat{CAT}} & & & &
      {\V\text-\cat{CAT}}
    }
  \end{equation*}
  whose components are equivalences in $\VCat$.
\end{Thm}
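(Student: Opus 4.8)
The plan is to reduce the statement to its concrete counterpart, Corollary~\ref{cor:5}, by transporting along the comparison equivalences $\conc \to \mod$ of Proposition~\ref{prop:1}. Throughout I would regard the functors $\mathrm{Alg}$, $\mathrm{Mod}_c$ and the natural isomorphisms $\theta$, $\bar\theta$ of Corollary~\ref{cor:5} as landing in $\VCat$, by forgetting the augmentation over $\E$; this is harmless, since the forgetful functor $\VCat/\E \to \VCat$ is an honest functor and so sends natural isomorphisms to natural isomorphisms.

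The first step is to promote the maps $c_\T \colon \conc \to \mod$ of~\eqref{eq:9} to a natural transformation $c \colon \mathrm{Mod}_c \Rightarrow \mathrm{Mod}$ of functors $\preth^\op \to \VCat$, where on the right $\mathrm{Mod}$ is the non-concrete semantics functor regarded, as in the statement, as landing in $\VCat$. For a pretheory map $H \colon \T \to \S$ one must check that $c_\T \circ H^\ast = H^\ast \circ c_\S \colon \conc[\S] \to \mod$; since $\mod$ is the pullback of $[\T^\op,\V] \to [\A^\op,\V] \leftarrow \KN(\V)$, it suffices to see that the two sides have equal composites into $[\T^\op,\V]$ and into $\KN(\V)$, and a short diagram chase shows both composites come out to $[H^\op,1]\circ P_\S$ and to $N_K \circ U_\S$ respectively. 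By Proposition~\ref{prop:1}, every component $c_\T$ is an equivalence.

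The remaining step is pure pasting. For the left-hand triangle I would take the natural transformation $\mathrm{Alg}\circ\Psi^\op \Rightarrow \mathrm{Mod}$ whose component at an $\A$-theory $\T$ is the composite $\E^{\Psi\T}\xrightarrow{\theta_\T}\conc\xrightarrow{c_\T}\mod$; its naturality is inherited from $\theta$ and from $c$ restricted along $\th^\op \hookrightarrow \preth^\op$, and each component is an isomorphism followed by an equivalence, hence an equivalence. For the right-hand triangle I would take $\mathrm{Alg}\Rightarrow\mathrm{Mod}\circ\Phi^\op$, obtained by whiskering $c$ with $\Phi^\op$ and pasting with $\bar\theta$; its component at an $\A$-nervous monad $\mathsf{T}$ is $\E^{\mathsf{T}}\xrightarrow{\bar\theta_{\mathsf{T}}}\conc[\Phi\mathsf{T}]\xrightarrow{c_{\Phi\mathsf{T}}}\mod[\Phi\mathsf{T}]$, again an isomorphism followed by an equivalence. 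Both are therefore natural transformations whose components are equivalences, as required.

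I do not foresee a genuine obstacle here: the real content is already contained in Corollary~\ref{cor:5} and Proposition~\ref{prop:1}. The only point that calls for any care is the naturality of $c$ in the first step, i.e.\ the compatibility of the comparisons~\eqref{eq:9} with reindexing along pretheory maps; but, as indicated above, this is a routine consequence of the universal property of the pullbacks involved.
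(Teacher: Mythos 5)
Your proposal is correct and follows essentially the same route as the paper: forget the augmentation over $\E$, paste the isomorphisms of Corollary~\ref{cor:5} with the natural transformation $\mathrm{Mod}_c \Rightarrow \mathrm{Mod}$ whose components are the equivalences of Proposition~\ref{prop:1}. The only difference is that you spell out the naturality of the comparison $\conc \to \cat{Mod}(\T)$ via the universal property of the pullback, a check the paper leaves implicit.
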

\begin{proof}
  First postcompose the natural isomorphisms~\eqref{eq:34} with the
  forgetful functor $\VCat/\E \rightarrow \VCat$. Then paste with the
  natural transformation
  $\mathrm{Mod}_c \Rightarrow \mathrm{Mod} \colon \th^\mathrm{op}
  \rightarrow \VCat$ coming from the previous proposition.
\end{proof}

\subsection{Local presentability and algebraic left adjoints}

Next in this section, we consider the categorical properties of the
$\V$-categories and $\V$-functors in the image of the semantics
functors. We begin with the case of pretheories.

\begin{Prop}\label{prop:models1}
  \begin{enumerate}[(i)]
    \item If $J \colon \A \to \T$ is an $\A$-pretheory then $\conc$ is
      locally presentable and $U_{\T} \colon \conc \to \E$ is a strictly
      monadic right adjoint.
    \item If $H \colon \T \rightarrow \S$ is a map of $\A$-pretheories,
      then $H^\ast \colon \conc[\S] \rightarrow \conc$ is a strictly
      monadic right adjoint.
    \end{enumerate}
\end{Prop}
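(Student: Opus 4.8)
The plan is to deduce both parts from Lemma~\ref{lem:1}, which is already the engine behind Theorem~\ref{thm:adjunction}; the only new ingredient is to squeeze out of that lemma a little more information than was used there.

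For (i) I would re-examine the defining pullback~\eqref{eq:15} of $\conc$. It exhibits $U_{\T}$ as the pullback of $[J^\mathrm{op},1]\colon[\T^\mathrm{op},\V]\to[\A^\mathrm{op},\V]$ along $N_K\colon\E\to[\A^\mathrm{op},\V]$: the former is strictly monadic by Example~\ref{ex:7} since $J^\mathrm{op}$ is identity-on-objects, and the latter is a right adjoint between locally presentable $\V$-categories, $\E$ being cocomplete. So Lemma~\ref{lem:1} applies verbatim, and it yields not only that $\conc$ is locally presentable with $U_{\T}$ a strictly monadic right adjoint, but also --- and this is the point needed below --- that the horizontal leg $P_{\T}\colon\conc\to[\T^\mathrm{op},\V]$ is a right adjoint between locally presentable $\V$-categories.

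For (ii), note first that $H$, commuting with the identity-on-objects structure functors out of $\A$, is itself identity-on-objects, hence so is $H^\mathrm{op}$, and so $[H^\mathrm{op},1]\colon[\S^\mathrm{op},\V]\to[\T^\mathrm{op},\V]$ is strictly monadic by Example~\ref{ex:7}. The key step is then to exhibit $H^\ast$ as a pullback, namely that the square
\[
  \cd{
    {\conc[\S]} \pullbackcorner \ar[r]^-{P_\S} \ar[d]_{H^\ast} & {[\S^\op,\V]} \ar[d]^{[H^\op,1]} \\
    {\conc} \ar[r]^-{P_\T} & {[\T^\op,\V]}
  }
\]
is a pullback. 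This follows from the pasting lemma: since the restriction functor $[\S^\mathrm{op},\V]\to[\A^\mathrm{op},\V]$ along the structure functor of $\S$ factors as $[J^\mathrm{op},1]\circ[H^\mathrm{op},1]$, pasting the square above onto the defining pullback~\eqref{eq:15} of $\conc$ reproduces precisely the defining pullback of $\conc[\S]$ --- the outer rectangle displayed to the right of~\eqref{eq:15} --- with $H^\ast$ recovered as the induced comparison; as that rectangle and the right-hand square are both pullbacks, so is the square above. Now Lemma~\ref{lem:1} applies once more, with $P_\T$ (a right adjoint between locally presentable $\V$-categories by part (i)) in the role of $G$ and the strictly monadic $[H^\mathrm{op},1]$ in the role of $V$; it delivers that $H^\ast$ is a strictly monadic right adjoint, completing the proof.

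The one place requiring care is the bookkeeping in the pasting argument: one must check that the comparison induced by the outer rectangle into the pullback $\conc$ is exactly the functor $H^\ast$ as originally defined, and that the pasted rectangle is literally the defining pullback of $\conc[\S]$ and not merely a pullback with the same corners. Beyond that, both parts are formal consequences of Lemma~\ref{lem:1}, the extra mileage coming entirely from the observation that the lemma simultaneously certifies the horizontal legs $P_\T$, $P_\S$ as well-behaved.
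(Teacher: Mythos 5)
Your proposal is correct and follows essentially the same route as the paper: part (i) is Lemma~\ref{lem:1} applied to the defining pullback~\eqref{eq:15}, and part (ii) uses pullback cancellation/pasting to exhibit the $H^\ast$ square as a pullback of the strictly monadic $[H^\mathrm{op},1]$ along $P_\T$, which Lemma~\ref{lem:1} (from part (i)) has already certified as a right adjoint between locally presentable categories. Your explicit remarks --- that Lemma~\ref{lem:1} also controls the horizontal leg $P_\T$, and that the induced comparison in the pasting really is $H^\ast$ by its very definition --- are exactly the points the paper leaves implicit.
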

\begin{proof}
  (i) follows from Lemma~\ref{lem:1} and the description in
  \eqref{eq:15} of $\conc \to \E$ as a pullback. For (ii), applying
  the standard cancellativity properties to the pullbacks defining
  $\conc[\S]$ and $\conc$ yields a pullback square
  \begin{equation*}
    \cd{
      {\conc[\S]} \pullbackcorner \ar[r]^-{P_\S} \ar[d]_{H^\ast} &
      {[\S^\mathrm{op}, \V]} \ar[d]^{[H^\mathrm{op}, 1]} \\
      {\conc} \ar[r]^-{P_\T} &
      {[\T^\mathrm{op}, \V]}\rlap{ .}
    }
  \end{equation*}
  Since $[H^\mathrm{op}, 1]$ is strictly monadic and $P_\T$ is a right
  adjoint between locally presentable categories, the result follows
  again from Lemma~\ref{lem:1}.
\end{proof}
Composing with the equivalence $\conc \simeq \cat{Mod}(\T)$ of
Proposition~\ref{prop:1}, this result immediately implies the local
presentability of the category $\cat{Mod}(\T)$ of non-concrete models.
Likewise, in the non-concrete setting, the analogue of
Proposition~\ref{prop:models1} remains true on replacing ``strict
monadicity'' by ``monadicity'' throughout. On the other hand, taken
together with Proposition~\ref{prop:semtheory}, it immediately implies
the corresponding result for nervous monads. We state this here as:
\begin{Prop}
  \begin{enumerate}[(i)]
  \item If $\mathsf{T}$ is an $\A$-nervous monad then $\E^\mathsf{T}$
    is locally presentable, and
    $U^\mathsf{T} \colon \E^\mathsf{T} \to \E$ is a strictly monadic
    right adjoint.
  \item If $\alpha \colon \mathsf{T} \rightarrow \mathsf{S}$ is a map
    of $\A$-nervous monads, then
    $\alpha^\ast \colon \E^{\mathsf{S}} \rightarrow \E^{\mathsf{T}}$
    is a strictly monadic right adjoint.
  \end{enumerate}
\end{Prop}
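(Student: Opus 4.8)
The proposition is the monad-side analogue of Proposition~\ref{prop:models1}, and the cleanest route is to deduce it directly from that proposition together with the semantics comparison already established. Recall from Corollary~\ref{cor:2} that a monad $\mathsf{T}$ on $\E$ is $\A$-nervous precisely when $\mathsf{T} \cong \Psi\T$ for some $\A$-pretheory $J \colon \A \to \T$; and from Theorem~\ref{thm:adjunction} that in this case there is an isomorphism $\E^{\mathsf T} \cong \conc$ over $\E$, identifying $U^{\mathsf T}$ with $U_\T$. So for (i), I would fix such a $\T$, invoke Proposition~\ref{prop:models1}(i) to conclude that $\conc$ is locally presentable and $U_\T \colon \conc \to \E$ is a strictly monadic right adjoint, and then transport these properties across the isomorphism $\E^{\mathsf T} \cong \conc$ over $\E$: local presentability is a property of the $\V$-category, and being a strictly monadic right adjoint is preserved by isomorphism over the base.

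For (ii), I would run the same translation on morphisms. A map $\alpha \colon \mathsf{T} \to \mathsf{S}$ of $\A$-nervous monads yields, under the fully faithful $\mathrm{Alg} \colon \mnd^\mathrm{op} \to \VCat/\E$, a functor $\alpha^\ast \colon \E^{\mathsf S} \to \E^{\mathsf T}$ over $\E$. Applying $\Phi$ gives a pretheory map $\Phi(\alpha) \colon \Phi\mathsf{T} \to \Phi\mathsf{S}$, and since both monads are $\A$-nervous, the counit components $\varepsilon_{\mathsf T}, \varepsilon_{\mathsf S}$ are invertible (Theorem~\ref{thm:counit}), so $\alpha$ is recovered up to isomorphism as $\Psi\Phi(\alpha)$ conjugated by these counits. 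By naturality of the isomorphism $\theta$ of Proposition~\ref{prop:semtheory} (equivalently, the isomorphisms $\E^{\Psi\T} \cong \conc$ of Theorem~\ref{thm:adjunction}), the functor $\alpha^\ast$ is then identified, over $\E$, with $\Phi(\alpha)^\ast \colon \conc[\Phi\mathsf S] \to \conc[\Phi\mathsf T]$. Proposition~\ref{prop:models1}(ii) tells us the latter is a strictly monadic right adjoint, and again this property transports across the identification over $\E$.

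The one point requiring a little care — and the main (mild) obstacle — is checking that these reductions respect the functors \emph{over $\E$}, so that ``strictly monadic'' really does transfer. Strict monadicity of $U^{\mathsf S} \colon \E^{\mathsf S} \to \E$ together with the factorisation $U^{\mathsf T} \circ \alpha^\ast = U^{\mathsf S}$ does not by itself give strict monadicity of $\alpha^\ast$; one needs the identification of $\alpha^\ast$ with $\Phi(\alpha)^\ast$ as functors over $\E$, which is exactly what the naturality square for $\theta$ in Proposition~\ref{prop:semtheory} supplies. Once that square is in hand, everything else is a matter of observing that isomorphism over a common base preserves the property of being a strictly monadic right adjoint, and that local presentability of a $\V$-category is invariant under equivalence. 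I would therefore organise the write-up as: (1) reduce to the pretheory case via Corollary~\ref{cor:2} and Theorem~\ref{thm:adjunction}; (2) for morphisms, invoke the naturality of $\theta$ to identify $\alpha^\ast$ with $\Phi(\alpha)^\ast$ over $\E$; (3) apply Proposition~\ref{prop:models1} and transport.

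I would also note in passing that, since the excerpt has already deduced the local presentability and monadicity of $\cat{Mod}(\T)$ from Proposition~\ref{prop:models1} via Proposition~\ref{prop:1}, one could equally phrase the proof through the non-concrete models and Theorem~\ref{thm:9}; but the concrete route above is the most economical.
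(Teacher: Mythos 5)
Your proposal is correct and follows essentially the same route as the paper, which deduces the result from Proposition~\ref{prop:models1} ``taken together with Proposition~\ref{prop:semtheory}''; your write-up simply makes explicit the transport along the isomorphisms $\E^{\mathsf T}\cong\conc[\Phi\mathsf{T}]$ over $\E$ and the naturality of $\theta$ needed for part (ii). No gaps.
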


\subsection{Algebraic colimits of monads and theories}\label{section:AlgebraicColimits}

To conclude this section, we examine the interaction of the semantics
functors with colimits. We begin with the more-or-less classical case
of the semantics functor for monads
$\mathrm{Alg} \colon \mnd^\mathrm{op} \to \VCat/\E$.

In general, $\mnd$ need not be cocomplete. Indeed, when
$\V = \E = \cat{Set}$, it does not even have all binary coproducts;
see~\cite[Proposition~6.10]{Barr1970Coequalizers}. However many
colimits of monads do exist, and an important point about these is
that, in the terminology of~\cite{Kelly1980A-unified}, they are
\emph{algebraic}. That is, they are sent to limits by the semantics
functor $\mathrm{Alg} \colon \mnd^\mathrm{op} \to \VCat/\E$.

To prove this, we use the following lemma, which is a mild variant of
the standard result that right adjoints preserve limits.

\begin{Lemma}\label{lemma:generator}
  Let $\C$ be a complete (ordinary) category with a strongly
  generating class of objects $X$ and consider a functor
  $U \colon \A \to \C$. If each $x \in X$ admits a reflection along
  $U$ then $U$ preserves any limits that exist in $\A$.
\end{Lemma}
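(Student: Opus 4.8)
The plan is to mimic the standard argument that a right adjoint preserves limits, but to get away with only having reflections for the objects of the strongly generating class $X$, rather than for every object of $\C$.

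First I would take a diagram $D \colon \I \to \A$ admitting a limit $L = \lim D$ in $\A$, with limit cone $(\pi_i \colon L \to Di)_{i \in \I}$. Applying $U$ gives a cone $(U\pi_i \colon UL \to UDi)$ over $UD$ in $\C$; I must show this cone is limiting. Since $\C$ is complete, form the limit $M = \lim UD$ in $\C$ with cone $(\rho_i \colon M \to UDi)$, and let $t \colon UL \to M$ be the induced comparison map satisfying $\rho_i \circ t = U\pi_i$. The goal is to show $t$ is an isomorphism; because $X$ is a strongly generating class, it suffices to show that $t$ induces a bijection $\C(x, UL) \to \C(x, M)$ for every $x \in X$ — strong generation gives that such a map is an isomorphism once it is so after homming out of each $x$ (here one uses that a map which is ``componentwise invertible'' on a strong generator is invertible; more carefully, strong generation says the functors $\C(x,\thg)$ are jointly conservative, so a map inverted by all of them is an isomorphism).

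Next I would compute both sides. On the one hand, for $x \in X$ with reflection $r_x \colon x \to U(Rx)$ along $U$ (where $Rx \in \A$ is the reflection object), the universal property gives $\C(x, Ua) \cong \A(Rx, a)$ naturally in $a \in \A$. Hence
\begin{equation*}
  \C(x, UL) \cong \A(Rx, L) \cong \A(Rx, \lim D) \cong \lim_i \A(Rx, Di) \cong \lim_i \C(x, UDi),
\end{equation*}
using that $\A(Rx,\thg)$ preserves the limit $L = \lim D$ (representables preserve all limits). On the other hand, $\C(x, \thg)$ preserves the limit $M = \lim UD$ in $\C$, so $\C(x, M) \cong \lim_i \C(x, UDi)$ as well. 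Finally I would check that the composite isomorphism $\C(x, UL) \cong \C(x, M)$ is exactly postcomposition with $t$ — this is a routine diagram chase verifying that both identifications send a morphism $f \colon x \to UL$ to the family $(U\pi_i \circ f)_i$, which is the same as $(\rho_i \circ t \circ f)_i$. Since $X$ is strongly generating and $t$ is inverted by every $\C(x,\thg)$, we conclude $t$ is invertible, so $(U\pi_i)$ is a limit cone and $U$ preserves the limit.

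The main obstacle — really the only subtlety — is keeping track of \emph{naturality} and verifying that the chain of isomorphisms obtained from the reflections is genuinely compatible with the comparison map $t$, rather than merely abstractly matching the two sides; one must confirm that the reflection isomorphism $\C(x, Ua) \cong \A(Rx, a)$ is natural in $a$ so that it commutes with the limit projections, and that the Yoneda-type identification of $\C(x,\lim UD)$ is the expected one. A second minor point is being precise about the meaning of ``strongly generating'': I would use the characterization that the family $\{\C(x,\thg) : x \in X\}$ is jointly conservative (equivalently, jointly faithful and jointly ``reflects isomorphisms''), which is exactly what is needed to pass from componentwise invertibility of $t$ to invertibility of $t$. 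Everything else is bookkeeping with representables and limits.
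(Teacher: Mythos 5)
Your proof is correct and follows essentially the same route as the paper's: the reflections make each $\C(x,U\thg)\cong\A(Rx,\thg)$ representable and hence limit-preserving, and strong generation (joint conservativity of the $\C(x,\thg)$) then forces the comparison map into the limit to be invertible. The paper merely compresses your explicit comparison-map argument into the phrase that the $\C(x,\thg)$ ``jointly reflect limits''.
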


\begin{proof}
  As $X$ is a strong generator, the functors $\C(x,\thg)$ with
  $x \in X$ jointly reflect isomorphisms, and so jointly reflect
  limits. Accordingly $U$ preserves any limits that are preserved by
  $\C(x,U\thg)$ for each $x \in X$. But each $\C(x,U\thg)$ is
  representable and so preserves all limits; whence $U$ preserves any
  limits that exist.
\end{proof}

In the setting of $\Set$-enriched categories the following result,
expressing the algebraicity of colimits of monads, is a special case
of Proposition~26.3 of \cite{Kelly1980A-unified}.

\begin{Prop}\label{prop:algcol2}
  $\mathrm{Alg} \colon \mnd^\mathrm{op} \to \VCat/\E$ preserves
  limits.
\end{Prop}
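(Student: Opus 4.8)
The plan is to apply Lemma~\ref{lemma:generator} to the functor $\mathrm{Alg} \colon \mnd^\mathrm{op} \to \VCat/\E$. For this we need three ingredients: that $\VCat/\E$ is complete (as an ordinary category); that it carries a strongly generating class of objects; and that each such generating object admits a reflection along $\mathrm{Alg}$. The first is standard, since $\VCat$ has all (ordinary) limits and hence so does the slice $\VCat/\E$, computed by taking the limit in $\VCat$ of the relevant diagram together with the terminal cone leg into $\E$. For the strong generator, the natural candidate is the single object $1 \colon \V \to \E$ picking out the unit object, or more precisely the family of objects $A \colon \V \to \E$ with $A \in \A$; I will argue that the objects $\{(\A \xrightarrow{K} \E) \text{-pointed copies of } \V\}$, equivalently the free-living-object inclusions $a \colon 1 \to \E$ for $a \in \A$, jointly reflect isomorphisms in $\VCat/\E$, using the density of $\A$ in $\E$ together with the fact that a map over $\E$ is invertible precisely when it is fully faithful and (essentially, bijectively) surjective on objects, which one detects by mapping in from the one-object $\V$-categories sitting over each $a$.

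**Next I would** verify the reflection condition. A generating object here is a $\V$-functor $G \colon \G \to \E$ with $\G$ a very small $\V$-category (either $\V$ itself, or the free $\V$-category on an arrow, depending on precisely which strong generator is chosen for $\VCat$); the claim is that $\mnd^\mathrm{op}(\mathsf{T}, \thg)$-representing data exists, i.e.\ that $(G \colon \G \to \E)$ admits a reflection along $\mathrm{Alg} \colon \mnd^\mathrm{op} \to \VCat/\E$. Unpacking: a map from $G$ to $\mathrm{Alg}(\mathsf{T}) = (U^\mathsf{T} \colon \Alg{T}\to\E)$ in $\VCat/\E$ is a lift of $G$ through $U^\mathsf{T}$, and we must represent the assignment $\mathsf{T} \mapsto \{\text{such lifts}\}$ by a monad. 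When $\G = \V$ this is asking for the monad freely generated by a single unary-ish operation of arity $G(\ast)$; when $\G$ is the free $\V$-category on an arrow it is a monad freely generated by one operation and one ``naturality'' constraint. Existence of such free monads is exactly where local presentability of $\E$ is used: by~\cite{Kelly1980A-unified} (or the general theory of free monads on pointed endofunctors over locally presentable categories), the forgetful functor from monads to the relevant category of ``signatures'' over $\E$ has a left adjoint. I would cite this and possibly note that it is also deducible from our own Theorem~\ref{thm:adjunction} applied to a suitable pretheory presenting the generating datum.

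**The main obstacle** I expect is pinning down the correct strongly generating class in $\VCat/\E$ and proving that it is strongly generating — this is a genuinely $\V$-enriched point and one must be careful that ``strong generator'' is meant in the ordinary (unenriched) sense, since Lemma~\ref{lemma:generator} is an unenriched statement and $\mnd$, $\VCat/\E$ are treated as ordinary categories throughout. The cleanest route is probably: $\VCat$ itself is locally presentable as an ordinary category, with strong generator given by the free $\V$-categories on the ``cells'' of $\V$ (the unit, an arrow, etc.); slicing over $\E$ preserves this; and then one only needs finitely (or small-ly) many free monads to exist, which local presentability of $\E$ delivers. Once the generator is fixed and the free monads are in hand, the proof is a one-line invocation of Lemma~\ref{lemma:generator}, so I would keep the write-up short, front-loading the identification of the generator and the free-monad existence, and leaving the verification that these objects jointly reflect isomorphisms as a brief remark appealing to density of $K$ and the characterisation of isomorphisms in $\VCat/\E$ as the fully faithful, objhis-bijective maps.

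\begin{proof}
  By Lemma~\ref{lemma:generator}, it suffices to exhibit on the ordinary
  category $\VCat/\E$ a strongly generating class of objects, each of
  which admits a reflection along
  $\mathrm{Alg}\colon \mnd^\mathrm{op} \to \VCat/\E$.

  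For the strong generator, recall that $\VCat$ is locally presentable as
  an ordinary category, with strong generator comprising the free
  $\V$-category $\V^{\mathbf 0}$ on no objects, the free $\V$-category
  $\V^{\mathbf 1}$ on one object, and the free $\V$-category on an arrow;
  equivalently, one may use the single object $\V^{\mathbf 1}$ together with
  the domains and codomains of the generating cofibrations of $\V$ regarded
  as one-object $\V$-categories. Since the forgetful functor
  $\VCat/\E \to \VCat$ creates colimits and reflects isomorphisms, and a
  map in $\VCat/\E$ is invertible exactly when its underlying
  $\V$-functor is, it follows that the objects
  $(G \colon \G \to \E)$ with $\G$ ranging over this strong generator of
  $\VCat$ and $G$ ranging over all $\V$-functors into $\E$ form a strongly
  generating class in $\VCat/\E$: they jointly reflect isomorphisms, since
  a $\V$-functor over $\E$ is an isomorphism precisely when it is fully
  faithful and bijective on objects, both of which are detected by mapping
  in from $\V^{\mathbf 1}$ and from the free $\V$-category on an arrow.

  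It remains to show that each such $(G \colon \G \to \E)$ admits a
  reflection along $\mathrm{Alg}$. A morphism
  $(G\colon\G\to\E) \to \mathrm{Alg}(\mathsf T)$ in $\VCat/\E$ is precisely
  a lift of $G$ through the forgetful functor
  $U^{\mathsf T}\colon\Alg{T}\to\E$; and since $\G$ is one of finitely
  many small free $\V$-categories, such a lift amounts to the choice of a
  bounded amount of algebraic structure (an operation of bounded arity,
  possibly subject to one bounded constraint) interpretable in every
  $\mathsf T$-algebra functorially in the algebra. Representing the
  resulting functor $\mnd \to \cat{Set}$ is exactly the assertion that the
  free $\V$-monad on this datum exists over $\E$; and this holds because
  $\E$ is locally presentable, by~\cite{Kelly1980A-unified}. (Alternatively,
  one may present the datum by a suitable $\A$-pretheory and invoke
  Theorem~\ref{thm:adjunction}.) Thus every object of our strong generator
  has a reflection along $\mathrm{Alg}$, and Lemma~\ref{lemma:generator}
  applies to give the result.
\end{proof}
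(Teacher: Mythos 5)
Your overall strategy coincides with the paper's: both proofs invoke Lemma~\ref{lemma:generator} after exhibiting a strongly generating class in $\VCat/\E$ whose members admit reflections along $\mathrm{Alg}$. (The paper uses \emph{all} $\V$-functors $F \colon \X \to \E$ with small domain as the generator; your smaller class would also serve.) However, the step where you construct the reflections contains a genuine error of variance. A reflection of $(G \colon \G \to \E)$ along $\mathrm{Alg} \colon \mnd^\mathrm{op} \to \VCat/\E$ is a monad $\mathsf{T}_G$ with natural bijections $\mnd(\mathsf{S}, \mathsf{T}_G) \cong \VCat/\E(G, \mathrm{Alg}\,\mathsf{S})$ --- that is, maps \emph{into} $\mathsf{T}_G$ must classify liftings of $G$ through $U^{\mathsf S}$. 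Such a lifting is a choice of $\mathsf{S}$-algebra structure on the particular objects $Gx$, compatible with the morphisms of $\G$; it is \emph{not} ``an operation interpretable in every $\mathsf{S}$-algebra''. The object representing this data is the \emph{codensity monad} $\mathrm{Ran}_G(G)$ of $G$ (Dubuc's structure--semantics theorem, \cite[Theorem~II.1.1]{Dubuc1970Kan-extensions}), which exists simply because $\G$ is small and $\E$ is complete. The free monad on a signature, which you invoke via~\cite{Kelly1980A-unified}, represents the \emph{covariant} functor $\mathsf{S} \mapsto \{\text{interpretations of the operations in }\mathsf{S}\text{-algebras}\} \cong \mnd(F\Sigma, \mathsf{S})$; it classifies maps \emph{out}, and so is a reflection along the forgetful functor $V \colon \mnd \to \sig$, not along $\mathrm{Alg} \colon \mnd^\mathrm{op} \to \VCat/\E$. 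The parenthetical appeal to Theorem~\ref{thm:adjunction} fails for the same reason.

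Concretely: for $\G = \V^{\mathbf 1}$ sitting over an object $X \in \E$, the reflection you need is the codensity monad $\mathrm{Ran}_X(X) = \E(-, X) \pitchfork X$ of $X$ (whose maps-in classify $\mathsf{S}$-algebra structures on $X$), not the free monad on the signature with one operation of arity $X$. Once this is corrected, local presentability of $\E$ is not even needed for this step --- completeness suffices --- and the rest of your argument (the strong generator in $\VCat/\E$, inherited from one for $\VCat$ via the colimit-creating, conservative forgetful functor $\VCat/\E \to \VCat$) goes through and yields the paper's proof.
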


\begin{proof}
  The $\V$-functors $F \colon \X \to \E$ with small domain form a
  strong generator for $\VCat/\E$. Moreover, it is shown
  in~\cite[Theorem~II.1.1]{Dubuc1970Kan-extensions} that each such $F$
  has a reflection along
  $\mathrm{Alg} \colon \mnd^\mathrm{op} \rightarrow \VCat / \E$ given
  by its codensity monad $\mathrm{Ran}_F(F) \colon \E \rightarrow \E$.
  The result thus follows from Lemma~\ref{lemma:generator}.
\end{proof}

We now adapt the above results concerning $\mnd$ to the cases of
$\preth$, $\nerv$ and $\th$. In Theorem~\ref{thm:7} below, we will see
that these categories are locally presentable; in particular, and by
contrast with $\mnd$, they are cocomplete. It is also not difficult to
prove the cocompleteness directly.

\begin{Prop}\label{prop:algcol3}
  Each of the semantics functors
  $\mathrm{Alg} \colon \nerv^\mathrm{op} \to \VCat/\E$,
  $\mathrm{Mod}_c \colon \preth^\mathrm{op} \rightarrow \VCat/\E$ and
  $\mathrm{Mod}_c \colon \th^\mathrm{op} \rightarrow \VCat/\E$
  preserves limits.
\end{Prop}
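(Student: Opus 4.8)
The plan is to reduce each case to Proposition \ref{prop:algcol2} using the idempotency of the monad--pretheory adjunction \eqref{eq:adj} established in Theorem \ref{cor:idempotent}. First consider $\mathrm{Alg} \colon \nerv^\mathrm{op} \to \VCat/\E$. By Corollary \ref{cor:1}, $\nerv = \cat{Mnd}_{\A}(\E)$ is coreflective in $\mnd$ via $\Psi\Phi$, so the inclusion $\nerv \hookrightarrow \mnd$ is a right adjoint; dually, $\nerv^\mathrm{op} \hookrightarrow \mnd^\mathrm{op}$ is a left adjoint and hence preserves limits. Moreover $\nerv$ is a full subcategory, so the inclusion also creates any limits that exist in $\mnd$; thus a limit of $\A$-nervous monads is computed as in $\mnd$. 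Since $\mathrm{Alg} \colon \mnd^\mathrm{op} \to \VCat/\E$ preserves limits by Proposition \ref{prop:algcol2}, and $\mathrm{Alg} \colon \nerv^\mathrm{op} \to \VCat/\E$ is its restriction along the limit-preserving inclusion, this first case follows.

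Next consider $\mathrm{Mod}_c \colon \th^\mathrm{op} \to \VCat/\E$. By Corollary \ref{cor:5} we have a natural isomorphism $\mathrm{Mod}_c \cong \mathrm{Alg} \circ \Psi^\mathrm{op} \colon \th^\mathrm{op} \to \VCat/\E$. The equivalence \eqref{eq:eq} gives that $\Psi \colon \th \to \nerv$ is an equivalence, so $\Psi^\mathrm{op}$ preserves all limits; composing with the first case gives that $\mathrm{Mod}_c$ on $\th^\mathrm{op}$ preserves limits.

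Finally, for $\mathrm{Mod}_c \colon \preth^\mathrm{op} \to \VCat/\E$, I would argue via the reflection. By Corollary \ref{cor:1}, $\th$ is reflective in $\preth$ via $\Phi\Psi$; thus the inclusion $\th \hookrightarrow \preth$ is a right adjoint, so $\th^\mathrm{op} \hookrightarrow \preth^\mathrm{op}$ is a left adjoint and the reflector $(\Phi\Psi)^\mathrm{op} \colon \preth^\mathrm{op} \to \th^\mathrm{op}$ is a right adjoint, hence preserves limits. The key point, which is the main obstacle, is that $\mathrm{Mod}_c$ on $\preth^\mathrm{op}$ factors through this reflection to within natural isomorphism: the discussion following Proposition \ref{prop:semtheory} records that an $\A$-pretheory $\T$ and its reflection $\Phi\Psi\T$ have isomorphic categories of concrete models, naturally in $\T$, so $\mathrm{Mod}_c \colon \preth^\mathrm{op} \to \VCat/\E$ is naturally isomorphic to the composite of the limit-preserving reflector $(\Phi\Psi)^\mathrm{op} \colon \preth^\mathrm{op} \to \th^\mathrm{op}$ with the limit-preserving $\mathrm{Mod}_c \colon \th^\mathrm{op} \to \VCat/\E$ from the previous paragraph. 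Hence $\mathrm{Mod}_c$ on $\preth^\mathrm{op}$ preserves limits as well. The only subtlety to verify carefully is that the natural isomorphism $\mathrm{Mod}_c(\T) \cong \mathrm{Mod}_c(\Phi\Psi\T)$ is indeed natural over all $\A$-pretheory maps and sits over $\E$; this follows by combining $\theta$ of Proposition \ref{prop:semtheory} (which realises $\mathrm{Mod}_c(\T) \cong \E^{\Psi\T}$ over $\E$) with $\mathrm{Alg}$ applied to $\varepsilon_{\Psi\T}$, which is invertible since $\Psi\T$ is $\A$-nervous.
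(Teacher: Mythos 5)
Your overall strategy is the paper's: reduce everything to Proposition~\ref{prop:algcol2} by exhibiting each semantics functor as $\mathrm{Alg}$ precomposed with the opposite of a left adjoint. However, your first case contains a genuine error of reasoning, even though the conclusion survives. A \emph{coreflective} subcategory is one whose inclusion has a \emph{right} adjoint (the coreflector $\Psi\Phi$), so the inclusion $\nerv \hookrightarrow \mnd$ is a \emph{left} adjoint, not a right adjoint as you assert; consequently $\nerv^\mathrm{op} \hookrightarrow \mnd^\mathrm{op}$ is a \emph{right} adjoint, and it is right adjoints, not left adjoints, that preserve limits. You have made two directional slips that happen to cancel, but the argument as written (``left adjoint and hence preserves limits'') is invalid. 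The auxiliary claim that a full subcategory inclusion ``creates any limits that exist'' is also false in general and is not needed: the only fact required is that the coreflective inclusion, being a left adjoint, preserves colimits, so its opposite preserves limits. Note that you handle the dual situation for $\th \subseteq \preth$ correctly, which suggests the first case is a slip rather than a conceptual confusion; but it must be fixed.

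The remaining two cases are correct. For $\th^\mathrm{op}$ you compose the equivalence $\Psi^\mathrm{op} \colon \th^\mathrm{op} \to \nerv^\mathrm{op}$ with the first case, where the paper composes $\Psi^\mathrm{op} \colon \th^\mathrm{op} \to \mnd^\mathrm{op}$ (a right adjoint, as the opposite of the composite of the equivalence with the coreflective inclusion) with $\mathrm{Alg}$; these are the same argument. For $\preth^\mathrm{op}$ you take a detour through the reflection $(\Phi\Psi)^\mathrm{op}$ and the naturality of $\mathrm{Mod}_c(\T) \cong \mathrm{Mod}_c(\Phi\Psi\T)$, which does work once the naturality is checked as you indicate; but the paper's route is more direct and avoids this verification entirely: Proposition~\ref{prop:semtheory} already gives $\mathrm{Mod}_c \cong \mathrm{Alg} \circ \Psi^\mathrm{op}$ on all of $\preth^\mathrm{op}$, and $\Psi \colon \preth \to \mnd$ is a left adjoint by Theorem~\ref{thm:adjunction}, so $\Psi^\mathrm{op}$ preserves limits and you are done. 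You may wish to adopt that shortcut.
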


\begin{proof}
  These three functors are isomorphic to the respective composites:
  \begin{align}
    \nerv^\mathrm{op} \xrightarrow{\mathrm{incl}^\mathrm{op}} & \mnd^\mathrm{op}
    \xrightarrow{\mathrm{Alg}} \VCat/\E \label{eq:11}\\[-1ex]
    \preth^\mathrm{op} \xrightarrow{\Psi^\mathrm{op}} & \mnd^\mathrm{op}
    \xrightarrow{\mathrm{Alg}} \VCat/\E\label{eq:22}\\[-1ex]
    \th^\mathrm{op} \xrightarrow{\Psi^\mathrm{op}} & \mnd^\mathrm{op}
    \xrightarrow{\mathrm{Alg}} \VCat/\E\rlap{ ;}\label{eq:23}
  \end{align}
  for~\eqref{eq:11} this is clear, while for~\eqref{eq:22}
  and~\eqref{eq:23} it follows from Proposition~\ref{prop:semtheory}.
  The common second functor in each composite is limit-preserving by
  Proposition~\ref{prop:algcol2}, while the first functor is
  limit-preserving in each case since it is the opposite of a left
  adjoint functor---by Corollary~\ref{cor:1},
  Theorem~\ref{thm:adjunction} and Theorem~\ref{thm:1} (taken together
  with Corollary~\ref{cor:1}) respectively.
\end{proof}

We leave it to the reader to formulate this result also for
non-concrete models.

\section{The monad--theory correspondence in practice}
\label{sec:identifying-theories}

In this section, we return to the examples of our general setting
described in Section~\ref{sec:setting}, with the goal of describing as
explicitly as possible what the $\A$-nervous monads, the
$\A$-theories, and the corresponding models look like in each case. By
way of these descriptions, we will re-find many of the monad--theory
correspondences existing in the literature as instances of our main
Theorem~\ref{thm:1}.

To obtain our explicit descriptions, we will require some further
results which characterise $\A$-theories and $\A$-nervous monads in
particular situations. We begin this section by describing these
results.

\subsection{Theories in the presheaf context}
\label{sec:presheaf-context}

A number of the examples of our basic setting described in
Section~\ref{sec:basic-setting} arise in the following manner. We take
$\E = [\C^\mathrm{op},\V]$ a presheaf category, and take $\A$ to be
any full subcategory of $\E$ containing the representables. In this
situation, we then have a factorisation
\begin{equation}\label{eq:28}
  \xymatrix{
    \C \ar[r]^-{I}
    & \A \ar[r]^-{K} & [\C^\mathrm{op},\V] = \E
  }
\end{equation}
of the Yoneda embedding. The Yoneda lemma implies that
$Y \colon \C \rightarrow [\C^\mathrm{op}, \V]$ is dense, whereupon by
Theorem~5.13 of \cite{Kelly1982Basic}, both $I$ and $K$ are too. In
particular, $K$ provides an instance of our basic setting; we will
call this the \emph{presheaf context}. Each of
Examples~\ref{ex:2}\ref{item:3}, \ref{item:1}, \ref{item:2},
\ref{item:6}, and \ref{item:4} arise in this way.

\begin{Lemma}\label{lem:5}
  In the presheaf context, we have $N_I \cong K$ and
  $N_{K} \cong \Ran_{I^\mathrm{op}}$. Moreover, a functor
  $F \colon \A^\mathrm{op} \to \V$ is a $K$-nerve just when it is the
  right Kan extension of its restriction along
  $I^\mathrm{op} \colon \C^\mathrm{op} \rightarrow \A^\mathrm{op}$.
\end{Lemma}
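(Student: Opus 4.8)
\textbf{Proof proposal for Lemma~\ref{lem:5}.}

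The plan is to unwind the relevant nerve functors explicitly using the Yoneda lemma, and then appeal to a standard right-Kan-extension computation along a dense functor. First I would compute $N_I \colon \A \to [\C^\mathrm{op}, \V]$: by definition $N_I(a) = \A(I\thg, a)$, and since $I$ is (up to the Yoneda embedding) the inclusion of the representables, for $c \in \C$ we have $\A(Ic, a) \cong \A(Yc, a)$, which by fully faithfulness of $Y$ and the fact that $\A \hookrightarrow \E$ is full is isomorphic to $\E(Yc, Ka) \cong (Ka)(c)$ by the Yoneda lemma. Naturally in $c$ this says $N_I(a) \cong Ka$, i.e. $N_I \cong K$. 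The key point making this work is that $\A$ is a \emph{full} subcategory of $\E$ containing the representables, so that homs out of representables in $\A$ agree with homs in $\E$.

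Next I would identify $N_K \cong \Ran_{I^\mathrm{op}}(\thg)$, where $\Ran_{I^\mathrm{op}} \colon [\C^\mathrm{op}, \V] \to [\A^\mathrm{op}, \V]$ denotes right Kan extension along $I^\mathrm{op}$. For $X \in \E = [\C^\mathrm{op}, \V]$, the presheaf $N_K(X) = \E(K\thg, X) \colon \A^\mathrm{op} \to \V$ has value at $a \in \A$ given by $\E(Ka, X)$. Since $\A$ consists of (certain) presheaves and $K$ is the inclusion, we may compute $\E(Ka, X)$ by the co-Yoneda/density formula: writing $Ka$ as a colimit of representables, $\E(Ka, X) \cong \int_{c \in \C} \V((Ka)(c), X(c)) = \int_{c} \V(\E(Yc, Ka), X(c))$. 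On the other hand, the pointwise formula for right Kan extension gives $(\Ran_{I^\mathrm{op}} X)(a) = \int_{c \in \C} \V(\A^\mathrm{op}(a, I^\mathrm{op}c), X(c)) = \int_{c} \V(\A(Ic, a), X(c))$, and by the first part $\A(Ic, a) \cong (Ka)(c) \cong \E(Yc, Ka)$. So the two ends agree, naturally in $a$ and $X$; hence $N_K \cong \Ran_{I^\mathrm{op}}$. I would be slightly careful here about the enriched-end manipulations, but these are routine given that $\E$ is a $\V$-category of presheaves.

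For the final sentence, I would argue as follows. A presheaf $F \colon \A^\mathrm{op} \to \V$ is a $K$-nerve exactly when $F \cong N_K(X)$ for some $X \in \E$. By the isomorphism just established, this says $F \cong \Ran_{I^\mathrm{op}}(X)$. Now restrict along $I^\mathrm{op}$: since $I$ is dense, the counit of the adjunction $(\thg \circ I^\mathrm{op}) \dashv \Ran_{I^\mathrm{op}}$ is invertible (density of $I$ is precisely the statement that $N_I = [\thg \circ I^\mathrm{op}]$'s right adjoint $\Ran_{I^\mathrm{op}}$ is fully faithful, equivalently that restriction then right-Kan-extending is the identity), so $F \circ I^\mathrm{op} \cong X$ and therefore $F \cong \Ran_{I^\mathrm{op}}(F \circ I^\mathrm{op})$. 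Conversely, if $F \cong \Ran_{I^\mathrm{op}}(F \circ I^\mathrm{op})$ then $F$ is visibly of the form $\Ran_{I^\mathrm{op}}(X)$ with $X = F \circ I^\mathrm{op} \in [\C^\mathrm{op}, \V] = \E$, hence a $K$-nerve. The main obstacle, such as it is, is bookkeeping: keeping straight the three functors $I$, $K$, $Y$ and which nerve/Kan-extension is taken along which, and making sure the density of $I$ (as opposed to $K$, or $Y$) is what licenses the fixpoint characterisation. Density of $I$ follows from density of $Y$ via Theorem~5.13 of~\cite{Kelly1982Basic}, exactly as already noted before the lemma.
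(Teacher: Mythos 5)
Your proposal is essentially correct, and the first and last parts track the paper's argument closely; the middle part takes a genuinely different route. For $N_I \cong K$ you use exactly the paper's Yoneda computation. For $N_K \cong \Ran_{I^\mathrm{op}}$, however, the paper argues by uniqueness of adjoints: since $\mathrm{Lan}_Y K \dashv N_K$ and $[I^\mathrm{op},1] \dashv \Ran_{I^\mathrm{op}}$, it suffices to identify the two \emph{left} adjoints, and since both are cocontinuous this reduces to checking them on representables, where it is the first isomorphism again. You instead compute both right adjoints pointwise as ends, $\E(Ka,X) \cong \int_c \V((Ka)(c), Xc)$ versus $\int_c \V(\A(Ic,a),Xc)$, and match them via part one. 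Both work; the paper's version avoids any end manipulation and any naturality bookkeeping, at the cost of invoking the standard adjunctions, while yours is more hands-on and self-contained.

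One point to fix in the final step: you justify the invertibility of the counit of $[I^\mathrm{op},1] \dashv \Ran_{I^\mathrm{op}}$ (equivalently, full fidelity of $\Ran_{I^\mathrm{op}}$) by appeal to \emph{density} of $I$, and your parenthetical identifies density of $I$ with full fidelity of $\Ran_{I^\mathrm{op}}$. That identification is wrong: density of $I$ says $N_I \colon \A \to [\C^\mathrm{op},\V]$ is fully faithful, which is a different functor and a different condition. What actually makes the counit invertible --- and what the paper cites --- is that $I^\mathrm{op}$ is \emph{fully faithful}, so that a pointwise right Kan extension along $I^\mathrm{op}$ restricts back to the functor one started with. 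Since $I$ is the corestriction of the Yoneda embedding to a full subcategory, it is indeed fully faithful, so your conclusion stands; only the cited reason needs correcting.
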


\begin{proof}
  For the first isomorphism we calculate that
  \begin{equation}\label{eq:29}
    N_I(x) = \A(I\thg, x) \cong [\C^\mathrm{op}, \V](KI\thg, Kx) =
    [\C^\mathrm{op}, \V](Y\thg, Kx) \cong Kx
  \end{equation}
  by full fidelity of $K$ and the Yoneda lemma. For the second, since
  $\mathrm{Lan}_Y K \dashv N_K$ and
  $[I^\mathrm{op}, 1] \dashv \mathrm{Ran}_{I^\mathrm{op}}$ it suffices
  to show
  $\mathrm{Lan}_Y K \cong [I^\mathrm{op}, 1] \colon [\A^\mathrm{op},
  \V] \rightarrow [\C^\mathrm{op}, \V]$. Since both are cocontinuous,
  it suffices to show $(\mathrm{Lan}_Y K)Y \cong [I^\mathrm{op}, 1]Y$,
  which follows since
  $(\mathrm{Lan}_Y K)Y \cong K \cong N_I = [I^\mathrm{op}, 1]Y$ using
  full fidelity of $Y$ and~\eqref{eq:29}. Finally, since
  $I^\mathrm{op}$ is fully faithful,
  $F \colon \A^\mathrm{op} \rightarrow \V$ is a right Kan extension
  along $I^\mathrm{op}$ just when it is the right Kan extension of its
  own restriction. Thus the final claim follows using the isomorphism
  $N_K \cong \mathrm{Ran}_{I^\mathrm{op}}$.
\end{proof}

In this setting, we have practically useful characterisations of the
$\A$-theories and their (non-concrete) models.

\begin{Prop}
  \label{prop:6}
  Let $J \colon \A \rightarrow \T$ be an $\A$-pretheory in the
  presheaf context~\eqref{eq:28}.
  \begin{enumerate}[(i),itemsep=0em]
  \item A functor $F \colon \T^\mathrm{op} \rightarrow \V$ is a
    $\T$-model just when
    $FJ^\mathrm{op} \colon \A^\mathrm{op} \rightarrow \V$ is the right
    Kan extension of its restriction along
    $I^\mathrm{op} \colon \C^\mathrm{op} \rightarrow \A^\mathrm{op}$;
  \item $J \colon \A \rightarrow \T$ is itself an $\A$-theory just
    when it is the pointwise left Kan extension of its restriction
    along $I \colon \C \rightarrow \A$.
  \end{enumerate}
\end{Prop}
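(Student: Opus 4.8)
The plan is to establish (i) first, then derive (ii) from it together with Theorem~\ref{thm:unit} and standard facts about Kan extensions. For (i), recall that a $\T$-model is by definition a functor $F \colon \T^\mathrm{op} \to \V$ whose restriction $FJ^\mathrm{op} \colon \A^\mathrm{op} \to \V$ is a $K$-nerve. By the final sentence of Lemma~\ref{lem:5}, a functor $\A^\mathrm{op} \to \V$ is a $K$-nerve precisely when it is the right Kan extension of its restriction along $I^\mathrm{op} \colon \C^\mathrm{op} \to \A^\mathrm{op}$. Substituting this characterisation into the definition of $\T$-model gives (i) immediately; there is essentially nothing to prove beyond quoting Lemma~\ref{lem:5}.

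For (ii), the pretheory $J \colon \A \to \T$ is an $\A$-theory exactly when each representable $\T(J\thg, a) \in [\A^\mathrm{op}, \V]$ is a $K$-nerve (Definition~\ref{def:1}). By Lemma~\ref{lem:5} again, this says each $\T(J\thg, a)$ is the right Kan extension along $I^\mathrm{op}$ of its restriction, i.e.\ of $a \mapsto \T(JI\thg, a) = \T(JI\thg, a)$. So $J$ is an $\A$-theory iff for every $a \in \T$ the presheaf $\T(J\thg,a)$ is $\Ran_{I^\mathrm{op}}$ of its restriction along $I^\mathrm{op}$. I would then translate this ``fibrewise'' (one object $a$ at a time) condition on $\V$-presheaves into the single statement that $J$ itself is the pointwise left Kan extension of $JI \colon \C \to \T$ along $I \colon \C \to \A$. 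The bridge is the defining formula for a pointwise left Kan extension: $\Lan_I (JI)$ exists and agrees with $J$ iff for each $a' \in \A$ the canonical map $\colim\bigl( I/a' \to \C \xrightarrow{JI} \T \bigr) \to Ja'$ is an isomorphism in $\T$; and testing an arrow in $\T$ into an object $a$ against this colimit turns it into the assertion that $\T(Ja', a) \cong \lim_{(c,\, Ic \to a')} \T(JIc, a)$, naturally in $a' \in \A^\mathrm{op}$ and $a \in \T$. Recognising the right-hand side as the value at $a'$ of $\Ran_{I^\mathrm{op}}$ applied to $c \mapsto \T(JIc, a)$ shows that this family of isomorphisms (as $a$ ranges over $\T$) is exactly the statement that each $\T(J\thg, a)$ is a $K$-nerve.

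The main obstacle — really the only point requiring care — is the passage between the two formulations in (ii): from ``$\T(J\thg, a)$ is a right Kan extension for every $a$'' to ``$J = \Lan_I(JI)$ pointwise''. This rests on the general principle that a cocone under a diagram in a $\V$-category $\T$ is colimiting iff it is sent to a limiting cone in $\V$ by every representable $\T(\thg, a)$, combined with the fact that $\Ran_{I^\mathrm{op}}$ is computed pointwise (as an end, or equivalently by the formula above, since $I^\mathrm{op}$ has small domain). One must also note that pointwise left Kan extensions along $I$ exist here because $\A$ is cocomplete — being $\KN(\V)$-equivalently a full subcategory of a presheaf category, though in fact one only needs the \emph{weighted} colimits defining $\Lan_I$ — but since the statement is an ``iff'' one can phrase it as: the pointwise left Kan extension exists and equals $J$. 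With the dictionary of Lemma~\ref{lem:5} in hand, the argument is a routine manipulation of Kan extension formulas and the representability criterion for (co)limits, and I would not expect any genuine difficulty beyond being careful about variance ($I^\mathrm{op}$ versus $I$, left versus right Kan extension).
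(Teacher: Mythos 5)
Your proposal is correct and follows essentially the same route as the paper: part (i) is exactly the paper's argument (quote Lemma~\ref{lem:5} and the definition of $\T$-model), and for part (ii) the paper simply cites Proposition~4.46 of \cite{Kelly1982Basic} for the equivalence you derive by hand, namely that $J$ is the pointwise left Kan extension of $JI$ along $I$ if and only if each representable $\T(J\thg,a)$ is the right Kan extension of its restriction along $I^\mathrm{op}$. Two small cautions about your hand-derivation of that bridge. First, since the paper works over a general base $\V$, the conical formula $\colim\bigl(I/a' \to \C \to \T\bigr) \to Ja'$ is not the right one: a pointwise left Kan extension is the weighted colimit $\A(I\thg,a')\star JI$, and the representable criterion reads $\T(Ja',a) \cong [\C^\mathrm{op},\V]\bigl(\A(I\thg,a'),\, \T(JI\thg,a)\bigr)$, whose right-hand side is precisely the end computing $\bigl(\Ran_{I^\mathrm{op}}\T(JI\thg,a)\bigr)(a')$; the comma-category version you write is only the $\V=\Set$ case, though you do gesture at the weighted correction. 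Second, $\A$ is a small subcategory of $\E$ and is certainly not cocomplete, so the weighted colimits in question need not exist a priori; but, as you note, phrasing the condition as ``the pointwise left Kan extension exists and equals $J$'' (equivalently, via the representable criterion above) renders this harmless.
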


\begin{proof}
  (i) follows immediately from Lemma~\ref{lem:5} since, by definition,
  $F$ is a $\T$-model just when $FJ^\mathrm{op}$ is a $K$-nerve. For
  (ii), note that by Proposition~4.46 of~\cite{Kelly1982Basic},
  $J \colon \A \rightarrow \T$ is the pointwise left Kan extension of
  its restriction along $I$ just when, for each $x \in \T$, the
  functor $\T(J\thg, x) \colon \A^\mathrm{op} \rightarrow \V$ is the
  right Kan extension of its restriction along $I^\mathrm{op}$. By
  Lemma~\ref{lem:5}, this happens just when each $\T(J\thg, x)$ is a
  $K$-nerve---that is, just when $J$ is a $\A$-theory.
\end{proof}

We can sharpen these results using Day's notion of \emph{density
  presentation}~\cite{Day1974On-closed}. The density of an ordinary
functor $K \colon \C \rightarrow \D$ is often introduced as the
assertion that each object of $\D$ is the colimit of a certain diagram
in the image of $K$. It is this perspective that the notion of density
presentation generalises.

A \emph{family of colimits} $\Phi$ in the ordinary category $\D$ is a
class of diagrams $(D_i \colon \J_i \rightarrow \D)_{i \in I}$ each of
which has a colimit in $\D$. In the enriched case, a \emph{family of
  colimits} $\Phi$ in the $\V$-category $\D$ is a class of pairs
${(W_i \in [\J_i^\mathrm{op}, \V], D_i \colon \J_i \rightarrow \D)_{i
    \in I}}$ such that each weighted colimit $W_i \star D_i$ exists in
$\D$. In either case, a full replete subcategory $\B$ of $\D$ is
\emph{closed in $\D$ under $\Phi$-colimits} if it contains the
(weighted) colimit of any $D_i$ in $\Phi$ whenever it contains each
vertex of $D_{i}$. We say that $\D$ is the \emph{closure of $\B$ under
  $\Phi$-colimits} if the only replete full subcategory of $\D$
containing $\B$ and closed under $\Phi$-colimits is $\D$ itself.

Now given a fully faithful $K \colon \C \rightarrow \D$, we say that a
colimit in $\D$ is \emph{$K$-absolute} if it is preserved by $N_K$, or
equivalently, by each representable
$\D(Kx, \thg) \colon \D \rightarrow \V$. If $\D$ is the closure of
$\C$ under a family $\Phi$ of $K$-absolute colimits then $\Phi$ is
said to be a \emph{density presentation} for $K$. The nomenclature is
justified by Theorem~5.19 of~\cite{Kelly1982Basic}, which, among other
things, says that the fully faithful $K$ has a density presentation
just when it is dense.

We will make use of density presentations in the presheaf
context~\eqref{eq:28} with respect not to the dense $K$, but to the
dense $I$. By Lemma~\ref{lem:5} we have $N_I \cong K$, and so the
$I$-absolute colimits are in this case those preserved by
$K \colon \A \rightarrow \E$. We will see numerous instances of this
situation in Section~\ref{sec:monad-theory-equiv-1} below; we give a
couple of examples now to clarify the ideas.

\begin{Exs}
  \label{ex:8}\hfill
  \begin{enumerate}[(i)]
  \item \label{item:5} Example~\ref{ex:2}\ref{item:3} corresponds to
    the presheaf context
    \begin{equation*}
      \xymatrix{
        1 \ar[r]^-{I} & \mathbb F \ar[r]^-{K} & \Set\rlap{ ,}
      }
    \end{equation*}
    and here $I$ has a density presentation given by all \emph{finite
      copowers of ${1 \in \mathbb F}$}; these are $I$-absolute since $K$
    preserves them. In fact, $\mathbb F$ has all finite coproducts and
    these are preserved by $K$, so that there is a larger density
    presentation given by \emph{all finite coproducts}~in~$\mathbb{F}$.
  \item \label{item:7} Example~\ref{ex:2}\ref{item:1} yields the
    presheaf context below, wherein $I$ has a density presentation given
    by the wide pushouts
    $[n] \cong [1]+_{[0]}[1]+_{[0]}\ldots +_{[0]} [1]$:
    \begin{equation*}
      \xymatrix{
        \mathbb G_{1} \ar[r]^-{I} &
        \Delta_{0} \ar[r]^-{K} &
        [{\mathbb G_{1}}^\mathrm{op},\Set]\rlap{ .}
      }
    \end{equation*}
  \end{enumerate}
\end{Exs}

The reason we care about density presentations is the following
result, which comprises various parts of Theorem~5.29 of
\cite{Kelly1982Basic}.

\begin{Prop}\label{prop:7}
  Let $K \colon \C \to \D$ be fully faithful and dense. The following
  are equivalent:
  \begin{enumerate}[(i),itemsep=0em]
  \item $F \colon \D \to \E$ is the pointwise left Kan extension of
    its restriction along $K$;
  \item $F$ sends $\Phi$-colimits to colimits for any density
    presentation $\Phi$ of $K$;
  \item $F$ sends $K$-absolute colimits to colimits.
  \end{enumerate}
\end{Prop}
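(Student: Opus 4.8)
The plan is to establish the cycle of implications $(i) \Rightarrow (ii) \Rightarrow (iii) \Rightarrow (i)$, quoting the general density machinery of \cite{Kelly1982Basic} where it does the work. The statement is essentially a restatement of parts of Theorem~5.29 of \emph{ibid.}, so the task is really one of assembling the correct pieces rather than inventing anything.

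First I would record the standing facts that make everything run. Since $K$ is fully faithful and dense, every object $d \in \D$ is canonically a $K$-absolute colimit of objects in the image of $K$---concretely, $d \cong N_K(d) \star K$, the colimit of $K$ weighted by the nerve $N_K(d) = \D(K\thg, d)$, and this colimit is $K$-absolute because $N_K$, being fully faithful, preserves it. This ``canonical colimit'' presentation is the universal density presentation; any density presentation $\Phi$ of $K$ is by definition a family of $K$-absolute colimits whose closure is all of $\D$.

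For $(i) \Rightarrow (ii)$: if $F$ is the pointwise left Kan extension $\Lan_K(FK)$, then by the defining universal property $F(d) \cong \D(K\thg,d) \star FK$ for every $d$, computed pointwise. Given any density presentation $\Phi$ and any diagram $(W_i, D_i)$ in $\Phi$ with $D_i$ valued in $\D$, the colimit $W_i \star D_i$ is $K$-absolute, hence preserved by $N_K = \D(K\thg,\thg)$; since $F$ is the left Kan extension, its value is obtained by applying $FK$ to these nerve-data and this construction commutes with $K$-absolute colimits. Thus $F(W_i \star D_i) \cong W_i \star FD_i$, which is $(ii)$. For $(ii) \Rightarrow (iii)$: one applies $(ii)$ to the canonical density presentation consisting of all $K$-absolute colimits of diagrams valued in the image of $K$---but this needs a small argument, since a $K$-absolute colimit in $\D$ need not have its diagram factor through $K$. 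The fix is standard: one shows that the full subcategory of $\D$ on which $F$ preserves $K$-absolute colimits contains $K\C$ and is closed under $K$-absolute colimits, and then invokes that the canonical presentation generates $\D$; alternatively one quotes Theorem~5.19/5.29 of \cite{Kelly1982Basic} directly, which is cleanest. For $(iii) \Rightarrow (i)$: since $d \cong \D(K\thg,d)\star K$ is a $K$-absolute colimit, $(iii)$ gives $F(d) \cong \D(K\thg,d) \star FK$, naturally in $d$; comparing with the formula for $\Lan_K(FK)$, which exists and is pointwise precisely because these weighted colimits exist in $\E$ (they do, as absolute-type colimits, or directly because $\E$ is cocomplete), we conclude $F \cong \Lan_K(FK)$ pointwise.

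The main obstacle is the implication $(ii) \Rightarrow (iii)$: a given density presentation $\Phi$ may be much smaller than the class of \emph{all} $K$-absolute colimits, so one cannot simply substitute. One must argue that preservation of $\Phi$-colimits propagates to preservation of all $K$-absolute colimits, using that $\Phi$-colimits generate $\D$ and that the class of colimits a functor preserves has suitable closure properties. This is exactly the content bundled into Theorem~5.29 of \cite{Kelly1982Basic}, and the honest move here is to cite it rather than reprove it; the remaining implications are then short and formal.
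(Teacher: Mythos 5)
Your proposal is correct and takes essentially the same route as the paper, which offers no proof of its own beyond observing that the proposition ``comprises various parts of Theorem~5.29 of \cite{Kelly1982Basic}''; your sketched implications and your identification of $(ii)\Rightarrow(iii)$ as the step genuinely requiring Kelly's closure argument are both accurate. Since you too conclude that the honest move is to cite that theorem, there is nothing to add.
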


Combined with Proposition~\ref{prop:6}, this yields the desired
sharper characterisation of the $\A$-theories and their models.

\begin{Thm}\label{thm:8}
  Let $J \colon \A \rightarrow \T$ be an $\A$-pretheory in the
  presheaf context~\eqref{eq:28}, and let $\Phi$ be a density
  presentation for $I$.
  \begin{enumerate}[(i),itemsep=0em]
  \item A functor $F \colon \T^\mathrm{op} \rightarrow \V$ is a
    $\T$-model just when
    $FJ^\mathrm{op} \colon \A^\mathrm{op} \rightarrow \V$ sends
    $\Phi$-colimits in $\A$ to limits in $\V$;
  \item $J \colon \A \rightarrow \T$ is an $\A$-theory just when it
    sends $\Phi$-colimits to colimits.
  \end{enumerate}
\end{Thm}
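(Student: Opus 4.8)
The plan is to derive this as a direct consequence of Propositions~\ref{prop:6} and~\ref{prop:7}: the former rewrites each of the two conditions as a pointwise Kan-extension condition relative to the dense functor $I$, and the latter converts such a condition into the stated colimit-preservation property once a density presentation $\Phi$ for $I$ has been fixed. Throughout one uses that $I \colon \C \to \A$ is fully faithful and dense, as recorded after~\eqref{eq:28}, and that $\V$, being locally presentable, is complete.

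For (ii), I would argue as follows. By Proposition~\ref{prop:6}(ii), the pretheory $J \colon \A \to \T$ is an $\A$-theory exactly when it is the pointwise left Kan extension of its restriction $JI$ along $I$. Now apply the equivalence (i)$\Leftrightarrow$(ii) of Proposition~\ref{prop:7}, with $(\C, \D, \E, K, F)$ there instantiated as $(\C, \A, \T, I, J)$ and with the given $\Phi$: this says precisely that $J$ is such a pointwise left Kan extension if and only if it sends $\Phi$-colimits to colimits, which is the content of (ii).

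For (i), the one extra ingredient is a passage to opposite categories. By Proposition~\ref{prop:6}(i), $F \colon \T^\mathrm{op} \to \V$ is a $\T$-model precisely when $FJ^\mathrm{op} \colon \A^\mathrm{op} \to \V$ is the right Kan extension of its restriction along $I^\mathrm{op} \colon \C^\mathrm{op} \to \A^\mathrm{op}$; and since $\V$ is complete and $\C^\mathrm{op}, \A^\mathrm{op}$ are small, this right Kan extension $\Ran_{I^\mathrm{op}}$ is necessarily pointwise. One cannot feed $I^\mathrm{op}$ to Proposition~\ref{prop:7} directly, since in the presheaf context $I^\mathrm{op}$ need not be dense; so instead I would dualise the target. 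A functor into $\V$ is a pointwise right Kan extension along $I^\mathrm{op}$ exactly when the corresponding functor into $\V^\mathrm{op}$ is a pointwise left Kan extension along $I$ (the comma categories governing the two extensions being mutually opposite, and a colimit in $\V^\mathrm{op}$ being a limit in $\V$). Thus $FJ^\mathrm{op}$ is a $K$-nerve if and only if $(FJ^\mathrm{op})^\mathrm{op} \colon \A \to \V^\mathrm{op}$ is the pointwise left Kan extension of its restriction along $I$. Applying Proposition~\ref{prop:7}(i)$\Leftrightarrow$(ii) with target $\E = \V^\mathrm{op}$ — legitimate, as that proposition places no cocompleteness hypothesis on the target and $\V^\mathrm{op}$ is a $\V$-category — this holds iff $(FJ^\mathrm{op})^\mathrm{op}$ sends $\Phi$-colimits in $\A$ to colimits in $\V^\mathrm{op}$, i.e.\ iff $FJ^\mathrm{op}$ sends $\Phi$-colimits in $\A$ to limits in $\V$. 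Chaining the equivalences yields (i).

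The argument is essentially bookkeeping; the one point that needs care is precisely this dualisation in part~(i) — it is $I$, not $I^\mathrm{op}$, that is known to be dense, which forces us to dualise the target of $FJ^\mathrm{op}$ rather than apply Proposition~\ref{prop:7} to $I^\mathrm{op}$, and one must then correctly read ``preserves $\Phi$-colimits into $\V^\mathrm{op}$'' as ``sends $\Phi$-colimits to limits in $\V$''. (Equivalently, one could record the evident dual of Proposition~\ref{prop:7} and apply it to $I^\mathrm{op}$, whose opposite $I$ is dense, but the $\V^\mathrm{op}$ route stays within the results already stated.)
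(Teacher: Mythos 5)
Your proof is correct and follows exactly the route the paper intends: Theorem~\ref{thm:8} is stated as the combination of Propositions~\ref{prop:6} and~\ref{prop:7}, which is precisely your argument. The only thing you add is to spell out the dualisation needed in part~(i) --- passing from $\Ran_{I^{\mathrm{op}}}$ to $\Lan_{I}$ with target $\V^{\mathrm{op}}$ --- which the paper leaves implicit; this is handled correctly.
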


\subsection{Nervous monads, signatures and saturated classes}
\label{sec:nerv-monads-sign}
We now turn from characterisations for $\A$-theories to
characterisations for $\A$-nervous monads. We know from
Corollary~\ref{cor:2} that a monad is $\A$-nervous just when it is
isomorphic to $\Psi\T$ for some $\A$-pretheory
$J \colon \A \rightarrow \T$, and the examples in
Section~\ref{sec:preth-as-pres} make it an intuitively reasonable idea
that these are the monads which can be ``presented by operations and
equations with arities from $\A$''.

Our first characterisation result makes this idea precise by
exhibiting the category of $\A$-nervous monads as monadic over a
category of \emph{signatures}. We defer the proof of this result to
Section~\ref{sec:deferred-proofs}.

\begin{Defn}
  \label{def:3}
  The category $\sig$ of \emph{signatures} is the category
  $\V\text-\cat{CAT}(\ob \A, \E)$. We write
  $V \colon \mnd \rightarrow \sig$ for the functor sending
  $\mathsf{T}$ to $(Ta)_{a \in \A}$.
\end{Defn} 

\begin{restatable}{Thm}{thmseven}
  \label{thm:7}
  $V \colon \mnd \rightarrow \sig$ has a left adjoint
  $F \colon \sig \rightarrow \mnd$ taking values in $\A$-nervous
  monads. Moreover:
  \begin{enumerate}[(i),itemsep=0em]
  \item The restricted functor $V \colon \nerv \rightarrow \sig$ is
    monadic;
  \item A monad $\mathsf{T} \in \mnd$ is $\A$-nervous if and only if
    it is a colimit in $\mnd$ of monads in the image of $F$;
  \item Each of $\nerv$, $\preth$ and $\th$ is locally presentable.
  \end{enumerate}
\end{restatable}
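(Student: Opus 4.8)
The plan is to build the left adjoint $F \colon \sig \rightarrow \mnd$ explicitly, verify it lands among the $\A$-nervous monads, and then deduce (i)--(iii) by combining the idempotency of the monad--pretheory adjunction \eqref{eq:adj} with standard locally-presentable category theory. First I would observe that a signature $\Sigma \in \sig = \V\text-\cat{CAT}(\ob\A,\E)$ is precisely the data of an object of $\E$ for each $a \in \A$, so there is an evident ``free $\A$-pretheory on $\Sigma$'', obtained by adjoining to the initial pretheory (the identity $\id \colon \A \to \A$) one morphism $f_a \colon a \rightsquigarrow \Sigma a$ for each $a$; here I would need to make sense of adjoining a morphism whose codomain is an arbitrary object of $\E$ rather than a new formal object, which is handled just as in \eqref{eq:4} but pushing out along the copower $\bigl(\coprod_{a} \E(\Sigma a, K\thg) \cdot \atwo\bigr)$-type inclusion; the reader can also phrase it as a coend. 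Call the resulting pretheory $\T_\Sigma$ and set $F\Sigma := \Psi\T_\Sigma$. Since every monad of the form $\Psi\T$ is $\A$-nervous (Corollary \ref{cor:2}, via idempotency, Theorem \ref{cor:idempotent}), $F$ takes values in $\nerv$ automatically. The adjunction $F \dashv V$ then amounts to a chain of natural bijections
\[
  \mnd(\Psi\T_\Sigma, \mathsf S) \;\cong\; \preth(\T_\Sigma, \Phi\mathsf S) \;\cong\; \textstyle\prod_{a\in\A}\E(\Sigma a, S a) \;=\; \sig(\Sigma, V\mathsf S),
\]
where the first bijection is Theorem \ref{thm:adjunction} and the second is the universal property of $\T_\Sigma$ together with the computation $\Phi\mathsf S(J\thg, a) = \E(K\thg, Sa)$ from the proof of Theorem \ref{cor:idempotent}. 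I expect the bookkeeping in defining $\T_\Sigma$ and checking this last bijection to be the main technical obstacle, though it is routine.

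Next I would prove (ii). One inclusion is immediate: each $F\Sigma$ is $\A$-nervous, and by Corollary \ref{cor:1} the subcategory $\nerv \subseteq \mnd$ is coreflective, hence closed under whatever colimits $\mnd$ admits; so a colimit in $\mnd$ of monads $F\Sigma_i$ lies in $\nerv$. For the converse, let $\mathsf T$ be $\A$-nervous, so $\mathsf T \cong \Psi\Phi\mathsf T \cong \Psi\T$ with $\T = \Phi\mathsf T = J_{\mathsf T}$. The pretheory $\T$ is a colimit, in $\preth$, of free pretheories $\T_{\Sigma_i}$ on signatures: indeed, taking $\Sigma = (Ta)_{a\in\A}$ gives a canonical map $\T_\Sigma \to \T$ which is identity-on-objects and full, and one obtains $\T$ as a coequalizer of a pair of maps between free pretheories $\T_{\Sigma'} \rightrightarrows \T_\Sigma$ encoding the relations; since $\T$ is an $\A$-theory, one may alternatively cite that the reflective subcategory $\th \subseteq \preth$ inherits such presentations. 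Applying the left adjoint $\Psi$, which preserves colimits, and using $\Psi\T_{\Sigma_i} = F\Sigma_i$, exhibits $\mathsf T \cong \Psi\T$ as the corresponding colimit in $\mnd$ of monads in the image of $F$. (The colimit in $\mnd$ exists because $\Psi$ preserves it and $\Psi\T$ exists by Theorem \ref{thm:adjunction}.)

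Finally, for (i) and (iii): the functor $V \colon \nerv \to \sig$ is a right adjoint by the adjunction just constructed; $\sig = \V\text-\cat{CAT}(\ob\A,\E)$ is locally presentable, being a small product of copies of the locally presentable $\E$; and $V$ is conservative, since a map $\alpha \colon \mathsf S \to \mathsf T$ of $\A$-nervous monads with each $\alpha_a \colon Sa \to Ta$ ($a\in\A$) invertible is invertible --- the $K$-nerve $N_K$ is fully faithful and faithful functors reflect isos, or more directly, $\A$-nervousness pins down $S$ and $T$ by their restrictions along $\A$, so $\alpha$ is a natural isomorphism. To upgrade conservativity plus right-adjointness to monadicity I would invoke Beck's theorem: it remains to check $V$ creates (equivalently, since it is conservative and has a left adjoint, preserves and reflects) coequalizers of $V$-split pairs. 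Here I would argue that such coequalizers can be computed in $\mnd$ --- using the coreflection $\Psi\Phi$ of Corollary \ref{cor:1} to produce the colimit and the fact that $\Phi$, $\Psi$ interact well with the relevant colimits --- and that $V$ sends them correctly; alternatively, one invokes the crude monadicity theorem, since $\nerv$ is cocomplete (it is coreflective in $\mnd$, which has the needed colimits, or directly by (iii)). For (iii), $\nerv$ is locally presentable because it is monadic over the locally presentable $\sig$ via an accessible (indeed, colimit-preserving) monad --- one must check the induced monad $VF$ on $\sig$ is accessible, which follows since $F$ preserves colimits and $V$ is accessible (it preserves $\lambda$-filtered colimits for suitable $\lambda$, as these are computed pointwise in $\mnd$ when they exist among $\A$-nervous monads). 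Then $\preth$ is locally presentable as a comma-type category: $\preth$ is equivalent to a category of algebras, or one argues it is an accessible reflective (via $\th$) piece; most cleanly, $\th \simeq \nerv$ by Theorem \ref{thm:1}, giving local presentability of $\th$, and $\preth$ is locally presentable because it is a full subcategory of the functor-category-like total category $\V\text-\cat{CAT}_{\mathrm{ioo}}(\A, \text{--})$ closed under the relevant limits and filtered colimits --- or, again, because $\th \subseteq \preth$ is reflective (Corollary \ref{cor:1}) with $\preth$ accessible. I expect assembling the accessibility bookkeeping for (iii) to be the fiddliest part, but no single step is deep.
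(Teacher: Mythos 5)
Your construction of the left adjoint is essentially correct and close to the paper's: your ``free pretheory'' $\T_\Sigma$ is what the paper builds as a collage-plus-pushout, and the chain
$\mnd(\Psi\T_\Sigma,\mathsf S)\cong\preth(\T_\Sigma,\Phi\mathsf S)\cong\prod_{a}\E(\Sigma a,Sa)$
is a legitimate (arguably cleaner) way to get $F\dashv V$ with $F$ landing in $\nerv$. The genuine gaps are in (i) and (ii), where you have deferred precisely the steps that carry all the weight. For (i), ``invoke Beck'' or ``crude monadicity'' is not a reduction here: colimits in $\mnd$ are \emph{not} computed on underlying endofunctors, so there is no reason $V$ should preserve reflexive coequalisers, nor that the coequaliser in $\mnd$ of a $V$-split pair is sent by $V$ to the splitting. (Your conservativity argument is also off: ``$\A$-nervousness pins down $S$ and $T$ by their restrictions along $\A$'' is contradicted by Proposition~\ref{prop:groupoids}, which gives a $\Delta_0$-nervous monad whose endofunctor is not the left Kan extension of its restriction. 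Conservativity does hold, but via $\Phi$ and the invertibility of the counit at nervous monads.) The paper's proof of monadicity is the real content of the theorem: it reduces along a bipullback to the case $\E=[\A^\op,\V]$, where $\A$ is saturated, identifies $\nerv$ there with the cocontinuous monads using Lemma~\ref{lem:7}, and applies Lack's monadicity theorem for monoids in the right-closed monoidal category of $\A$-induced endofunctors (Proposition~\ref{prop:3}). None of that machinery is replaced by anything in your plan.

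For (ii), the claim that an $\A$-theory $\T$ is a coequaliser of free pretheories on \emph{signatures} does not follow from generalities about reflective subcategories and is not routine. The canonical cover $\T_\Sigma\to\T$ with $\Sigma_a=TKa$ exists because $\T$ is a theory (each $\T(J\thg,Ja)$ is a $K$-nerve), but the kernel of this cover is a pretheory whose hom-presheaves need not be $K$-nerves, so it is not covered by a free pretheory on a signature in any evident way; resolving it by representables and reassembling the colimit amounts to re-proving monadicity by hand. The paper instead deduces (ii) from (i) in one line via Lemma~\ref{lem:7}. Likewise for (iii), the accessibility of the induced monad $VF$ on $\sig$ is exactly the sort of claim that needs an argument; the paper sidesteps it by showing $\preth$ is \emph{finitarily} monadic over $\V\text-\cat{CAT}(\ob\A,[\A^\op,\V])$ and transporting local presentability along bipullbacks. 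In short: the adjunction is correctly constructed, but (i)--(iii) as written are a statement of intent rather than a proof.
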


The idea behind this result originates in~\cite{Kelly1993Adjunctions}.
A signature $\Sigma \in \sig$ specifies for each $a \in \A$ an
$\E$-object $\Sigma a$ of ``operations of input arity $a$''. The free
monad $F\Sigma$ on this signature has as its algebras the
\emph{$\Sigma$-structures}: objects $X \in \E$ endowed with a function
$\E(a,X) \rightarrow \E(\Sigma a,X)$ for each $a \in \A$. The above
result implies that a monad $\mathsf{T} \in \mnd$ is $\A$-nervous just
when it admits a presentation as a coequaliser
$F\Gamma \rightrightarrows F\Sigma \twoheadrightarrow
\mathsf{T}$---that is, a presentation by a signature $\Sigma$ of basic
operations together with a family $\Gamma$ of equations between
derived operations.

We now turn to our second characterisation result for $\A$-nervous
monads. This is motivated by the fact, noted in the introduction, that
in many monad--theory correspondences the class of monads can be
characterised by a colimit-preservation property. To reproduce this
result in our setting, we require a closure property of the arities in
the subcategory $\A$ which, roughly speaking, says that substituting
$\A$-ary operations into $\A$-ary operations again yields $\A$-ary
operations.

\begin{Defn}
  \label{def:4}
  An endo-$\V$-functor $F \colon \E \rightarrow \E$ is called
  \emph{$\A$-\induced} if it is the pointwise left Kan extension of
  its restriction along $K$. We call $\A$ a \emph{saturated class of
    arities} if $\A$-\induced endofunctors of $\E$ are closed under
  composition.
\end{Defn}

\begin{Ex}
  \label{ex:9}
  In the case of $K \colon \mathbb{F} \hookrightarrow \cat{Set}$,
  there is a density presentation for $K$ given by \emph{all filtered
    colimits in $\cat{Set}$}, so that by Proposition~\ref{prop:7}, an
  endofunctor $\cat{Set} \rightarrow \cat{Set}$ is
  $\mathbb{F}$-\induced just when it preserves filtered colimits. Thus
  $\mathbb{F} \hookrightarrow \cat{Set}$ is a saturated class of
  arities.
\end{Ex}

\begin{Ex}
  \label{ex:11}
  More generally, if $\Phi$ is a class of enriched colimit-types and
  ${K \colon \A \rightarrow \E}$ exhibits $\E$ as the free
  cocompletion of $\A$ under $\Phi$-colimits, then there is a density
  presentation of $K$ given by all $\Phi$-colimits, and an endofunctor
  of $\E$ is $K$-\induced just when it preserves $\Phi$-colimits. Thus
  $\A$ is a saturated class of arities.
\end{Ex}

\begin{Ex}
  \label{ex:10}
  Let $K \colon \A \hookrightarrow \Set$ be the inclusion of the
  one-object full subcategory $\A$ on the two-element set
  $2 = \{0,1\}$. Since the dense generator $1$ of $\cat{Set}$ is a
  retract of $2$, and taking retracts does not change categories of
  presheaves, $\A$ is dense in $\cat{Set}$. We claim it does
  \emph{not} give a saturated class of arities.

  To see this, note first that
  $(\thg)^2 \colon \cat{Set} \rightarrow \cat{Set}$ is $\A$-\induced,
  being a left Kan extension along $K$ of the representable
  $\A(2, \thg) \colon \A \rightarrow \cat{Set}$. We claim that
  $(\thg)^2 \circ (\thg)^2$ is \emph{not} $\A$-\induced. For indeed,
  by the Yoneda lemma, any $X \in [\A, \cat{Set}]$ has an epimorphic
  cover by copies of the unique representable $\A(2, \thg)$. Since
  left Kan extension preserves epimorphisms, each $\mathrm{Lan}_K(X)$
  admits an epimorphic cover by copies of $(\thg)^2$. But
  $(\thg)^2 \circ (\thg)^2 \cong (\thg)^4$ can admit no such cover,
  since the identity map on $4$ does not factor through $2$, and so
  cannot be $\A$-\induced.
\end{Ex}

The proof of the following result will again be deferred to
Section~\ref{sec:deferred-proofs} below.

\begin{restatable}{Thm}{thmfive}
  \label{thm:5}
  Let $\A$ be a saturated class of arities in $\E$. The following are
  equivalent properties of a monad $\mathsf{T} \in \mnd$:
  \begin{enumerate}[(i),itemsep=0em]
  \item $\mathsf{T}$ is $\A$-nervous;
  \item $T \colon \E \rightarrow \E$ is $\A$-\induced;
  \item $T \colon \E \rightarrow \E$ preserves $\Phi$-colimits for any
    density presentation $\Phi$ of $K$.
  \end{enumerate}
\end{restatable}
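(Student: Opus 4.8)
The equivalence $\text{(ii)}\Leftrightarrow\text{(iii)}$ is immediate from Proposition~\ref{prop:7}, since being $\A$-\induced is by definition being the pointwise left Kan extension of the restriction along $K$, which that proposition equates with sending $K$-absolute colimits (equivalently $\Phi$-colimits for any density presentation $\Phi$) to colimits. So the content is the equivalence with (i). The plan is to prove $\text{(i)}\Rightarrow\text{(ii)}$ and $\text{(ii)}\Rightarrow\text{(i)}$ separately, using the saturation hypothesis crucially only in the second direction.

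For $\text{(i)}\Rightarrow\text{(ii)}$: suppose $\mathsf{T}$ is $\A$-nervous. By Corollary~\ref{cor:2}, $\mathsf{T}\cong\Psi\T$ for some $\A$-pretheory $J\colon\A\to\T$, so by Theorem~\ref{thm:adjunction} we have a pullback square presenting $\Alg{T}$ over $\E$ as $N_K^{-1}$ of $[\T^\mathrm{op},\V]$, and in particular $\E(K\thg,TKa)\cong\A_{\mathsf T}(J_{\mathsf T}\thg,J_{\mathsf T}a)$ is a $K$-nerve for each $a$ (this is essentially the computation in the proof of Theorem~\ref{cor:idempotent}). More directly, one observes that the functor $N_K\colon\E\to[\A^\mathrm{op},\V]$ is fully faithful with $\Lan_{K}(N_K X)=X$-type behaviour, and that $T=\Lan_K(TK)$ will follow once we know the unit $TK=\Lan_K(TK)\circ K$ is an isomorphism (automatic) and that $T$ sends $K$-absolute colimits to colimits. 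For the latter: $T=U^{\mathsf T}F^{\mathsf T}$, and since $\mathsf T$ is $\A$-nervous, $U^{\mathsf T}$ creates the relevant colimits (it is monadic by the Proposition following Corollary~\ref{cor:5}) and $F^{\mathsf T}$ preserves all colimits; combined with the fact, read off from the pullback characterisation of $\Alg{T}$, that the $K$-absolute colimits in $\E$ are computed in $\Alg{T}$ pointwise after applying $N_K$, one concludes $T$ preserves $K$-absolute colimits, hence is $\A$-\induced by Proposition~\ref{prop:7}.

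For $\text{(ii)}\Rightarrow\text{(i)}$: this is where saturation enters. Assume $T\colon\E\to\E$ is $\A$-\induced. First note that $\mathsf{T}$ then lies in the image, up to isomorphism, of the free-monad functor story: more precisely, I would show that the full subcategory of $\A$-\induced endofunctors of $\E$ is closed under composition (by hypothesis) and contains the identity, hence is a monoidal subcategory of $\eend$; a monad structure on an $\A$-\induced endofunctor then makes $\mathsf T$ a monoid therein. The key point is that for an $\A$-\induced $T$, the factorisation~\eqref{eq:factorisations} has $K_{\mathsf T}\colon\A_{\mathsf T}\to\Alg{T}$ dense, and clause (ii) of Definition~\ref{def:2} holds; equivalently, by Theorem~\ref{thm:counit}, that the counit $\varepsilon_{\mathsf T}\colon\Psi\Phi\mathsf T\to\mathsf T$ is invertible. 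I would verify this by checking that the canonical square~\eqref{eq:canIso} is a pullback, using Lemma~\ref{lem:2}: both legs are discrete isofibrations and $N_K$ is fully faithful, so it suffices to show $N_{K_{\mathsf T}}$ is fully faithful (i.e.\ $K_{\mathsf T}$ is dense) and that $X\in[\A_{\mathsf T}^\mathrm{op},\V]$ is a $K_{\mathsf T}$-nerve iff $XJ_{\mathsf T}^\mathrm{op}$ is a $K$-nerve. Density of $K_{\mathsf T}$ and the nerve condition both reduce, via the adjointness isomorphism $\Alg{T}(F^{\mathsf T}K\thg,Y)\cong\E(K\thg,U^{\mathsf T}Y)$, to statements about $\E$ and the endofunctor $T$; the ``if'' direction of the nerve condition is exactly where one needs that composing $\A$-\induced functors stays $\A$-\induced, because one must recognise certain presheaves built from $T$-algebra data as $K$-nerves, and this amounts to $T$ (and iterates of $T$ arising in the bar-type resolution of a $T$-algebra) preserving $K$-absolute colimits.

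The main obstacle I expect is precisely the ``if'' direction of clause (ii) of Definition~\ref{def:2} for an $\A$-\induced monad: showing that a presheaf on $\A_{\mathsf T}$ whose restriction to $\A$ is a $K$-nerve is already a $K_{\mathsf T}$-nerve. Morally this says that a $T$-algebra is determined by its underlying $\E$-object together with the (compatible) action, and that no extra gluing happens at the level of $\A_{\mathsf T}$; the saturation hypothesis is what prevents pathologies like Example~\ref{ex:10}, where $(\thg)^2\circ(\thg)^2$ fails to be $\A$-\induced and a putative algebra structure cannot be reconstructed from the $\A$-ary data. Concretely I would present a $T$-algebra $X$ as a reflexive coequaliser (the canonical presentation $TTX\rightrightarrows TX\to X$), observe that $T$ preserves this coequaliser and more generally the colimits needed, and use saturation to ensure the relevant composite endofunctors occurring in this resolution remain $\A$-\induced so that the coequaliser is $K$-absolute; then the nerve of $X$ restricted to $\A_{\mathsf T}$ is forced, giving the desired uniqueness. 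Assembling this carefully is the real work; the remaining bookkeeping with Lemma~\ref{lem:2}, Proposition~\ref{prop:7} and Theorem~\ref{thm:counit} is routine.
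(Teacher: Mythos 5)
There is a genuine gap, and it is located exactly where you claim the argument is easy. You assert that saturation is needed ``crucially only in the second direction'', i.e.\ that (i) $\Rightarrow$ (ii) holds for an arbitrary dense $\A$. This is false: Proposition~\ref{prop:groupoids} exhibits a $\Delta_0$-nervous monad (the one for groupoids on $\Gph$) whose underlying endofunctor is \emph{not} $\Delta_0$-\induced, and Corollary~\ref{cor:notsaturated} deduces the non-saturation of $\Delta_0$ precisely by contraposition from Theorem~\ref{thm:5}. So any correct proof of (i) $\Rightarrow$ (ii) must invoke saturation, and yours does not. The error in your argument is the step ``$U^{\mathsf T}$ creates the relevant colimits (it is monadic)'': monadicity of $U^{\mathsf T}$ gives creation of $U^{\mathsf T}$-absolute coequalisers, and more generally of those colimits which $T$ itself preserves---so using it to conclude that $T = U^{\mathsf T}F^{\mathsf T}$ preserves $K$-absolute colimits assumes exactly what you are trying to prove. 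The earlier remark that $\E(K\thg,TKa)$ is a $K$-nerve (the computation from Theorem~\ref{cor:idempotent}) only controls $T$ on objects of $\A$, not the behaviour of $T$ on $K$-absolute colimits of such objects, which is what $\A$-\inducedness requires.

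Your sketch of (ii) $\Rightarrow$ (i) is pointed in a workable direction (saturation keeps the iterates $TT, TTT, \ldots$ appearing in the bar resolution $\A$-\induced, so the canonical coequaliser presenting an algebra is $K$-absolute), but you defer the entire verification of Definition~\ref{def:2}(ii) to ``the real work'', so as written it is a plan rather than a proof. The paper avoids both difficulties by not arguing the two implications separately at all: it shows that $\amnd$ (the monads with $\A$-\induced endofunctor) and $\nerv$ are each the colimit-closure in $\mnd$ of the free monads on $\A$-signatures, hence equal. For $\nerv$ this is Theorem~\ref{thm:7}(ii); for $\amnd$ it follows from Proposition~\ref{prop:3}---that the free-monad functor $F$ lands in $\amnd$ and that $V \colon \amnd \to \sig$ is monadic---together with Lemma~\ref{lem:7}. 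Saturation enters through Lemma~\ref{lem:8}, which uses it to make $\aend$ a right-closed monoidal category so that Lack's monadicity theorem for categories of monoids applies. If you want to salvage your two-implication structure, you would need to route (i) $\Rightarrow$ (ii) through some such global argument (e.g.\ colimit-closure of $\amnd$ and Theorem~\ref{thm:7}(ii)) rather than through the monadicity of $U^{\mathsf T}$.
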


\subsection{The monad--theory equivalence in practice}
\label{sec:monad-theory-equiv-1}

We now apply our characterisation results to the examples of
Section~\ref{sec:setting}. In many cases, the explicit descriptions we
obtain of the $\A$-nervous monads, the $\A$-theories, and their models
will allow us to reconstruct a familiar monad--theory correspondence
from the literature.

\begin{Exs}\label{exs:TheoriesDensity}
  As before, we begin with the unenriched examples where
  $\V = \cat{Set}$.

  \begin{enumerate}[(i)]
  \item The case $\E = \cat{Set}$ and $\A = \mathbb{F}$ corresponds to
    the instance of the presheaf context described in
    Examples~\ref{ex:8}\ref{item:5}. Applied to the density
    presentations for $I$ given there, Theorem~\ref{thm:8} tells us
    that an $\mathbb{F}$-pretheory
    $J \colon \mathbb{F} \rightarrow \T$ is an $\mathbb{F}$-theory
    just when it preserves finite copowers of $1$, or equally (using
    the larger density presentation) all finite coproducts. It thus
    follows that the $\mathbb{F}$-theories are the \emph{Lawvere
      theories} of~\cite{Lawvere1963Functorial}. Moreover a functor
    $F \colon \T^\mathrm{op} \to \Set$ is a $\T$-model if and only if
    $FJ^\mathrm{op} \colon \mathbb F^\mathrm{op} \to \Set$ preserves
    finite products. Since, in this case, $J$ also \emph{reflects}
    finite coproducts, this happens just when
    $F \colon \T^\mathrm{op} \to \Set$ is itself
    finite-product-preserving, that is, just when $F$ is a model of
    the Lawvere theory $\T$.

    On the other hand, by Example~\ref{ex:9}, $\mathbb{F}$ is a
    saturated class of arities, and the $\mathbb{F}$-\induced
    endofunctors are the finitary ones; so by Theorem~\ref{thm:5}, a
    monad on $\cat{Set}$ is $\mathbb{F}$-nervous just when it is
    finitary. Theorem~\ref{thm:1} thus specialises to the classical
    \emph{finitary monad--Lawvere theory} correspondence, while
    Theorem~\ref{thm:9} recaptures its compatibility with
    semantics.\label{item:8}

  \item When $\E$ is locally finitely presentable and $\A = \E_{f}$,
    the category of $K$-nerves is,
    by~\cite[Kollar~7.9]{Gabriel1971Lokal}, the full subcategory of
    $[\E_f^\mathrm{op}, \cat{Set}]$ on the
    \emph{finite-limit-preserving} functors. So an $\E_f$-pretheory
    $J \colon \E_f \rightarrow \T$ is an $\E_f$-theory just when each
    $\T(J\thg, a) \colon {\E_f}^\mathrm{op} \rightarrow \cat{Set}$
    preserves finite limits. By the Yoneda lemma, this happens just
    when $J$ preserves finite colimits, so that the $\E_f$-theories
    are precisely~\cite{Nishizawa2009Lawvere}'s \emph{Lawvere
      $\E$-theories}.

    The concrete $\T$-models in this setting are exactly the models
    of~\cite[Definition~2.2]{Nishizawa2009Lawvere}. The general
    $\T$-models are those functors
    $F \colon \T^\mathrm{op} \rightarrow \cat{Set}$ for which
    $FJ^\mathrm{op} \colon \smash{\E_f^\mathrm{op}} \rightarrow
    \cat{Set}$ is a $K$-nerve, i.e., finite-limit-preserving; these
    are the more general models
    of~\cite[Definition~12]{Lack2009Gabriel-Ulmer}, and the
    correspondence between the two notions in Proposition~\ref{prop:1}
    recaptures Proposition~15 of~\emph{ibid}.

    On the monad side, since $K \colon \E_f \rightarrow \E$ exhibits
    $\E$ as the free filtered-colimit completion of $\E_f$,
    Example~\ref{ex:11} and Theorem~\ref{thm:5} imply that $\E_f$ is a
    saturated class, and that the $\E_f$-nervous monads are the
    finitary ones. So in this case, Theorem~\ref{thm:1} and
    Corollary~\ref{cor:5} reconstruct (the unenriched version of) the
    monad--theory correspondence given
    in~\cite[Theorem~5.2]{Nishizawa2009Lawvere}.\label{item:9}

  \item More generally, when $\E$ is locally $\lambda$-presentable and
    $\A = \E_\lambda$ is a skeleton of the full subcategory of
    $\lambda$-presentable objects, the $\E_\lambda$-theories are those
    pretheories $J \colon \E_\lambda \rightarrow \T$ which preserve
    $\lambda$-small colimits; the $\T$-models are functors
    $F \colon \T^\mathrm{op} \rightarrow \cat{Set}$ for which
    $FJ^\mathrm{op}$ preserves $\lambda$-small limits; and the
    $\E_\lambda$-nervous monads are those whose endofunctor preserves
    $\lambda$-filtered colimits.

  \item When $\E = [\mathbb{G}_1^\mathrm{op}, \cat{Set}]$ and
    $\A = \Delta_0$, we are in the presheaf context of
    Examples~\ref{ex:8}\ref{item:7}. For the density presentation for
    $I$ given there, Theorem~\ref{thm:8} tells us that a pretheory
    $J \colon \Delta_0 \rightarrow \T$ is a $\Delta_0$-theory just
    when it preserves the wide pushouts
    $[n] \cong [1]+_{[0]}[1]+_{[0]}\ldots +_{[0]} [1]$. Moreover, a
    functor $ X \colon \T^\mathrm{op} \rightarrow \cat{Set}$ is a
    $\T$-model just when it sends each of these wide pushouts to a
    limit in $\cat{Set}$. This is precisely the \emph{Segal condition}
    of \cite{Segal1968Classifying}; in elementary terms, it requires
    the invertibility of each canonical map
    \begin{equation}\label{eq:37}
      Xn \longrightarrow X1 \times_{X0} X1 \times_{X0} \cdots \times_{X0} X1\rlap{ .}
    \end{equation}

    In Corollary~\ref{cor:notsaturated} below we will see that $\Delta_0$
    is \emph{not} a saturated class of arities, and so we have no more
    direct characterisations of the $\Delta_0$-nervous monads than is
    given by Corollary~\ref{cor:2} or Theorem~\ref{thm:7}. However,
    Example~\ref{ex:3} provides us with natural examples of
    $\Delta_0$-nervous monads: namely, the monads $\mathsf{T}$ and
    $\mathsf{T}_g$ for \emph{categories} and for \emph{groupoids} on
    $[\mathbb{G}_1^\mathrm{op}, \cat{Set}]$. As was already noted
    in~\cite{Weber2007Familial}, the nervosity of~$\mathsf{T}$ recaptures
    the classical nerve theorem relating categories and simplicial sets.
    Indeed, the $\Delta_0$-theory associated to $\mathsf{T}$ is the first
    part of the (bijective-on-objects, fully faithful) factorisation
    \begin{equation*}
      \Delta_0 \xrightarrow{J_\mathsf{T}} \Delta
      \xrightarrow{K_\mathsf{T}} \cat{Cat}
    \end{equation*}
    of the composite
    $F_\mathsf{T}K \colon \Delta_0 \rightarrow \cat{Cat}$. The
    interposing object here is the topologist's simplex category
    $\Delta$, with $K_T$ the standard inclusion into $\cat{Cat}$. Thus,
    to say that $\mathsf{T}$ is $\Delta_0$-nervous is to say that:
    \begin{enumerate}[itemsep=0em]
    \item The classical nerve functor
      $N_{K_\mathsf{T}} \colon \cat{Cat} \rightarrow [\Delta^\mathrm{op},
      \cat{Set}]$ is fully faithful;
    \item The essential image of $N_{K_\mathsf{T}}$ comprises those
      $X \in [\Delta^\mathrm{op}, \cat{Set}]$ for which $XJ_\mathsf{T}$
      is a $K$-nerve.
    \end{enumerate}
    This much is already done in~\cite{Weber2007Familial}, but our use of
    density presentations allows for a small improvement. To say that
    $XJ_\mathsf{T}$ is a $K$-nerve in (b) is equally to say that $X$ is a
    $\T$-model, or equally that $X$ satisfies the Segal condition
    expressed by the invertibility of each~\eqref{eq:37}. This is a mild
    sharpening of~\cite{Weber2007Familial}, where the ``Segal condition''
    is left in the abstract form given in (b) above.

    In a similar way, the nervosity of the monad $\mathsf T_g$ for small
    groupoids captures the ``symmetric nerve theorem''. This states that
    the functor $\Gpd \rightarrow [\mathbb{F}_+^\mathrm{op}, \cat{Set}]$
    sending a groupoid to its \emph{symmetric nerve}---indexed by the
    category of non-empty finite sets---is fully faithful, and
    characterises the essential image once again as the functors
    satisfying the Segal condition~\eqref{eq:37}.

  \item With $\E = [\mathbb{G}^\mathrm{op}, \cat{Set}]$ and
    $\A = \Theta_0$, we are now in the presheaf context
    \begin{equation*}
      \xymatrix{
        \mathbb G \ar[r]^-{I} & \Theta_{0} \ar[r]^-{K} & [\mathbb G^\mathrm{op},\Set]\rlap{ .}
      }
    \end{equation*}
    $I$ has a density presentation given by the \emph{$I$-globular sums}
    $(n_{1},\ldots,n_{k}) \cong (n_{1}) +_{(n_{2})} + (n_{3}) + \ldots
    +_{(n_{k-1})} (n_{k})$ in $\Theta_0$; whence by Theorem~\ref{thm:8},
    a pretheory $J \colon \Theta_0 \rightarrow \T$ is a $\Theta_0$-theory
    when it preserves these $I$-globular sums---that is, when it is a
    \emph{globular theory} in the sense of
    \cite{Berger2002A-Cellular}\begin{footnote}{The definition of
        globular theory in~\cite{Berger2002A-Cellular} has the extra
        condition, satisfied in most cases, that $J$ be a faithful
        functor.}\end{footnote}. A functor
    $F \colon \T^\mathrm{op} \rightarrow \cat{Set}$ is a $\T$-model when
    it sends $I$-globular sums to limits, thus when each map
    \begin{equation*}
      X\vec n \longrightarrow Xn_{1} \times_{Xn_{2}} Xn_{3} \times _{Xn_{4}} \ldots \times_{Xn_{k-1}} Xn_{k}
    \end{equation*}
    is invertible. Once again, $\Theta_0$ is \emph{not} a saturated class
    of arities, and so there is no direct characterisation of the
    $\Theta_0$-nervous monads; however, their interaction with
    $\Theta_0$-theories is important in the literature on globular
    approaches to higher category theory, as we now outline.

    Globular theories can describe structures on globular sets such as
    strict or weak $\omega$-categories and $\omega$-groupoids. For the
    strict variants, we pointed out in Section~\ref{sec:preth-as-pres-1}
    that these may be modelled by $\Theta_0$-pretheories; and since
    reflecting a pretheory $\T$ into a theory $\Phi \Psi \T$ does not
    change the models, it is immediate that there are $\Theta_0$-theories
    modelling these structures too.

    The original definition of globular weak $\omega$-category was given
    by Batanin in~\cite{Batanin1998Monoidal}; he defines them be globular
    sets equipped with algebraic structure controlled by a \emph{globular
      operad}. Globular operads can be understood as certain cartesian
    monads on globular sets. Berger~\cite{Berger2002A-Cellular}
    introduced globular theories and described the passage from a
    globular operad $T$ to a globular theory $\Theta_T$ just as in
    Section~\ref{sec:from-monads-preth} above. In our language, his
    Theorem~1.17 states exactly that each globular operad $T$ is
    $\Theta_0$-nervous, so that algebras for the globular operad are the
    same as models of the associated theory $\Theta_T$. In particular,
    Batanin's weak $\omega$-categories are the models of a globular
    theory\begin{footnote}{As an aside, we note that a complete
        understanding of those globular theories corresponding to
        globular operads was obtained in Theorem 6.6.8 of
        \cite{Ara2010Sur-les-infty-groupoides}. See also Section 3.12 of
        \cite{Berger2012Monads}.}\end{footnote}. On the other hand,
    Grothendieck weak
    $\omega$-groupoids~\cite{Maltsiniotis2010Grothendieck} are, \emph{by
      definition}, models for certain globular theories called
    \emph{coherators}.\label{item:14}
  \end{enumerate}
  We now proceed to our examples over a more general base for enrichment
  $\V$.
  \begin{enumerate}[(i)]
    \addtocounter{enumi}{5}
  \item With $\V = \E$ a locally finitely presentable symmetric monoidal
    category and with $\A = \V_f$, we are in the presheaf context
    \begin{equation*}
      \xymatrix{
        \I \ar[r]^-{I} & \V_{f} \ar[r]^-{K} & \V\rlap{ ,}
      }
    \end{equation*}
    wherein $I$ has a density presentation given by the class of all
    \emph{finite tensors}--- tensors by finitely presentable objects of
    $\V$. Thus by Theorem~\ref{thm:8}, the $\V_f$-theories are the
    pretheories $J \colon \V_f \rightarrow \T$ which preserve finite
    tensors, which are precisely the \emph{Lawvere $\V$-theories}
    of~\cite[Definition~3.1]{Power1999Enriched}. Furthermore, like
    in~\ref{item:8}, a functor $F \colon \T^\mathrm{op} \to \V$ is a
    $\T$-model just when it preserves finite cotensors, just as in
    Definition~3.2 of \emph{ibid}. On the other hand,
    $\V_f \rightarrow \V$ exhibits $\V$ as the free filtered-colimit
    completion of $\V_f$; whence by Example~\ref{ex:11} it is a saturated
    class of arities, and by Theorem~\ref{thm:5} the $\V_f$-nervous
    monads are again the finitary ones. So Theorems~\ref{thm:1}
    and~\ref{thm:9} specialise to Theorems~4.3, 3.4 and 4.2
    of~\cite{Power1999Enriched}.

  \item Now taking $\E$ to be any locally finitely presentable
    presentable $\V$-category and $\A = \E_f$, we may argue as
    in~\ref{item:9} to recapture the fully general enriched
    monad--theory correspondence of~\cite{Nishizawa2009Lawvere}, and its
    interaction with semantics.

  \item Now suppose we are in the situation of
    Examples~\ref{ex:2}\ref{item:4}, provided with a class $\Phi$ of
    enriched colimit-types satisfying Axiom~A of~\cite{Lack2011Notions}.
    With $\E = \V$ and $\A = \V_\Phi$, we are now in the presheaf
    context
    \begin{equation*}
      \xymatrix{
        \I \ar[r]^-{I} & \V_{\Phi} \ar[r]^-{K} & \V\rlap{ .}
      }
    \end{equation*}
    By~\cite[Theorem~5.35]{Kelly1982Basic}, $I$ has a density
    presentation given by $\Phi$-tensors (i.e., tensors by objects in
    $\Phi$) while by~\cite[Theorem~3.1]{Lack2011Notions}, $K$ exhibits
    $\V$ as the free $\Phi$-flat cocompletion of $\V_\Phi$. Arguing as in
    the preceding parts, we see that $\V_\Phi$-theories are pretheories
    $J \colon \V_\Phi^\mathrm{op} \rightarrow \T$ which preserve
    $\Phi$-tensors, that $\T$-models are $\Phi$-tensor-preserving
    functors $F \colon \T^\mathrm{op} \rightarrow \V$, and that a monad
    is $\V_\Phi$-nervous if its underlying endofunctor preserves
    $\Phi$-flat colimits. This sharpens slightly the results obtained
    in~\cite{Lack2011Notions} in the special case $\E = \V$.

  \item Finally, in the situation of Examples~\ref{ex:2}\ref{item:10},
    we find that the $\A$-theories are the $\Phi$-colimit preserving
    pretheories $J \colon \A \rightarrow \T$; that the $\T$-models are
    functors $F \colon \T^\mathrm{op} \rightarrow \V$ such that
    $FJ^\mathrm{op}$ preserves $\Phi$-limits; and that a monad is
    $\A$-nervous just when it preserves $\Phi$-flat colimits. In this
    way, our Theorems~\ref{thm:1} and~\ref{thm:9} reconstruct
    Theorems~7.6 and 7.7 of~\cite{Lack2011Notions}.
  \end{enumerate}
\end{Exs}

\section{Monads with arities and theories with arities}
\label{sec:arities}

In the introduction, we mentioned the general framework for
monad--theory correspondences obtained in~\cite{Weber2007Familial,
  Berger2012Monads}. Similar to this paper, the basic setting involves
a category $\E$ and a small, dense subcategory
$K \colon \A \hookrightarrow \E$; given these data, one defines
notions of \emph{monad with arities $\A$} and \emph{theory with
  arities $\A$}, and proves an equivalence between the two that is
compatible with semantics.

In this section, we compare this framework with ours by comparing the
classes of monads and of theories. We will see that our setting yields
\emph{strictly} larger classes of monads and theories which are
better-behaved in practically useful ways. On the other hand, in the
more restrictive setting of~\cite{Weber2007Familial,
  Berger2012Monads}, checking that a monad or theory is in the
required class may give greater combinatorial insight into the
structure which it describes.

\subsection{Monads with arities versus nervous monads}
\label{sec:monads-with-arities-1}

In~\cite{Weber2007Familial, Berger2012Monads} the authors work in the
\emph{unenriched} setting; the introduction to~\cite{Berger2012Monads}
states that the results ``should be applicable'' also in the enriched
one. To ease the comparison to our results, we take it for granted
that this is true, and transcribe their framework into the enriched
context without further comment.

Another difference is that we assume local presentability of $\E$
while~\cite{Weber2007Familial} assumes only \emph{cocompleteness},
and~\cite{Berger2012Monads} not even that. Given a small dense
subcategory, there is no readily discernible difference between
cocompleteness and local presentability\footnote{Indeed, if there
  were, then it would negate the large cardinal axiom known as
  \emph{Vop\v enka's principle}~\cite[Chapter 6]{Adamek1994Locally}.};
however, cocompleteness is substantively different from nothing, so
that in this respect~\cite{Berger2012Monads}'s results are more
general than ours. However all known applications are in the context
of a locally presentable $\E$, and so we do not lose much in
restricting to this context. In conclusion, when we make our
comparison we will work in exactly the same general setting as in
Section~\ref{sec:setting}, and now have:
\begin{Defn}
  \label{def:5}
  \cite[Definition~4.1]{Weber2007Familial} An endofunctor
  $T \colon \E \rightarrow \E$ is said to \emph{have arities $\A$} if
  the composite $\V$-functor
  $N_KT \colon \E \rightarrow [\A^\mathrm{op}, \V]$ is the left Kan
  extension of its own restriction along $K$. A monad
  $\mathsf{T} \in \mnd$ is a \emph{monad with arities $\A$} if its
  underlying endofunctor has arities $\A$.
\end{Defn}

We consider the following way of restating this to be illuminating.

\begin{Prop}\label{prop:10}
  An endofunctor $T \colon \E \rightarrow \E$ has arities $\A$ if and
  only if it sends $K$-absolute colimits to $K$-absolute colimits. In
  particular, each endofunctor with arities $\A$ is $\A$-\induced.
\end{Prop}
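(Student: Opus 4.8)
The plan is to characterise ``$T$ has arities $\A$'' directly in terms of the behaviour of $T$ on $K$-absolute colimits, exploiting the same Kan-extension machinery used in Lemma~\ref{lem:5} and Proposition~\ref{prop:7}. By definition, $T$ has arities $\A$ precisely when $N_K T \colon \E \to [\A^\mathrm{op}, \V]$ is the pointwise left Kan extension of its restriction $N_K T K$ along $K$. Since $K$ is fully faithful and dense, Proposition~\ref{prop:7} (applied with $\D = \E$, $\C = \A$, and the functor in question being $N_K T$) tells us that this holds if and only if $N_K T$ sends $K$-absolute colimits in $\E$ to colimits in $[\A^\mathrm{op}, \V]$. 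So the first step is simply to invoke Proposition~\ref{prop:7} to replace the Kan-extension condition by the condition ``$N_K T$ preserves $K$-absolute colimits''.

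The second step is to unwind what ``$N_K T$ preserves a $K$-absolute colimit'' means. A colimit in $\E$ is $K$-absolute precisely when it is preserved by $N_K$. Now suppose $W \star D$ is such a colimit; I want to show $N_K T$ preserves it if and only if $T$ sends $W \star D$ to another $K$-absolute colimit. In one direction, if $T(W \star D)$ is again a $K$-absolute colimit $W \star TD$, then applying $N_K$ and using $K$-absoluteness of the target gives $N_K T (W \star D) \cong W \star N_K T D$, so $N_K T$ preserves it. For the converse, one uses that $N_K$ is fully faithful (density of $K$), so that it reflects those colimits which it could possibly preserve: if $N_K T$ preserves the colimit $W \star D$, then the comparison map $\colim_i TD_i \to T(W\star D)$ becomes invertible after applying the fully faithful $N_K$, hence was already invertible; and the resulting colimit in $\E$ is $K$-absolute precisely because $N_K$ sends it to a colimit. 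The slight subtlety here — and what I'd expect to be the main obstacle — is being careful that ``$N_K T$ preserves the colimit'' is genuinely equivalent to ``$T$ preserves it \emph{and} the result is $K$-absolute'', rather than just one implication; this is exactly where fully-faithfulness of $N_K$ does the work, and it should be spelled out rather than waved through.

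Putting the two steps together yields: $T$ has arities $\A$ $\iff$ $N_K T$ preserves $K$-absolute colimits $\iff$ $T$ sends $K$-absolute colimits to $K$-absolute colimits, which is the first assertion. For the final sentence, suppose $T$ has arities $\A$. Being $\A$-\induced means, by Definition~\ref{def:4}, that $T$ is the pointwise left Kan extension of $TK$ along $K$; by Proposition~\ref{prop:7} again (now applied to $T \colon \E \to \E$ itself), this is equivalent to $T$ sending $K$-absolute colimits to colimits in $\E$. But we have just shown $T$ sends them to $K$-absolute colimits, which are in particular colimits; so $T$ is $\A$-\induced. This last step is immediate once the reformulation is in hand, since a $K$-absolute colimit is a fortiori a colimit.
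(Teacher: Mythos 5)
Your proof is correct and follows essentially the same route as the paper's: invoke Proposition~\ref{prop:7} to translate ``has arities $\A$'' into ``$N_KT$ preserves $K$-absolute colimits'', then use full fidelity of $N_K$ (reflection of colimiting cocones) to identify this with ``$T$ sends $K$-absolute colimits to $K$-absolute colimits'', and deduce the second claim by applying Proposition~\ref{prop:7} to $T$ itself. The only cosmetic caveat is that your comparison-map phrasing tacitly assumes $W\star TD$ already exists in $\E$; the cleaner statement you also give---a fully faithful functor reflects colimiting cocones---avoids this and is all that is needed.
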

\begin{proof}
  By Proposition~\ref{prop:7}, $T$ has arities $\A$ just when $N_KT$
  sends $K$-absolute colimits to colimits. Since $N_K$ is fully
  faithful, it reflects colimits, and so $\mathsf{T}$ has arities $\A$
  just when $T$ sends $K$-absolute colimits to colimits which are
  preserved by $N_K$---that is, to $K$-absolute colimits.

  For the second claim, recall from Definition~\ref{def:4} that an
  endofunctor $T \colon \E \rightarrow \E$ is $\A$-\induced if it is
  the left Kan extension of its own restriction to $\A$, or
  equivalently, by Proposition~\ref{prop:7}, when it sends
  $K$-absolute colimits to colimits.
\end{proof}

Recall also that we call a class of arities $\A$ \emph{saturated} when
$\A$-\induced endofunctors are closed under composition.
Example~\ref{ex:10} shows that this condition is not always satisfied.
In light of the preceding result, the endofunctors with arities $\A$
can be seen as a natural subclass of the $\A$-\induced endofunctors
for which composition-closure is \emph{always} verified.

The reason that Weber introduced monads with arities was in order to
prove his \emph{nerve theorem}~\cite[Theorem 4.10]{Weber2007Familial},
which in our language may be restated as:

\begin{Thm}\label{thm:11}
  Monads with arities $\A$ are $\A$-nervous.
\end{Thm}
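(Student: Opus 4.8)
The plan is to prove that a monad $\mathsf T$ with arities $\A$ satisfies $\mathsf T \cong \Psi(\Phi \mathsf T)$, so that $\A$-nervousness follows immediately from Corollary~\ref{cor:2}. Now $\Phi \mathsf T$ is the pretheory $J_\mathsf T \colon \A \to \A_\mathsf T$, and by Theorem~\ref{thm:adjunction} the monad $\Psi(\Phi\mathsf T)$ is pinned down by an isomorphism $\E^{\Psi\Phi\mathsf T} \cong \conc[\A_\mathsf T]$ over $\E$. So it is enough to exhibit an isomorphism $\E^{\mathsf T} \cong \conc[\A_\mathsf T]$ over $\E$: full fidelity of $\mathrm{Alg}$ in~\eqref{eq:10} will then transport it to the required isomorphism of monads.

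The key step is a computation. Let $\mathsf S$ be the monad on $[\A^\mathrm{op},\V]$ induced by the strictly monadic functor $[J_\mathsf T^\mathrm{op},1] \colon [\A_\mathsf T^\mathrm{op},\V] \to [\A^\mathrm{op},\V]$; then by~\eqref{eq:15} the category $\conc[\A_\mathsf T]$ consists precisely of the $\mathsf S$-algebras whose underlying presheaf is a $K$-nerve. Since $\mathsf S = [J_\mathsf T^\mathrm{op},1] \circ \mathrm{Lan}_{J_\mathsf T^\mathrm{op}}$ and $\A_\mathsf T(b,a) \cong \E(Kb, TKa)$, the coend formula for left Kan extensions gives a natural isomorphism
\[ SP(b) \;\cong\; \int^{a\in\A} \E(Kb, TKa) \cdot P(a) \qquad (P \in [\A^\mathrm{op},\V],\ b \in \A). \]
On the other hand, the hypothesis that $\mathsf T$ has arities $\A$ is, by Definition~\ref{def:5}, exactly that $N_K T \cong \mathrm{Lan}_K(N_K T K)$, and evaluating this pointwise Kan extension gives a natural isomorphism
\[ \E(Kb, TZ) \;\cong\; \int^{a\in\A} \E(Ka, Z) \cdot \E(Kb, TKa) \qquad (Z \in \E,\ b \in \A). \]
Setting $P = N_K Z$ and comparing the two displays produces a natural isomorphism $S \circ N_K \cong N_K \circ T \colon \E \to [\A^\mathrm{op},\V]$; in particular $\mathsf S$ carries $K$-nerves to $K$-nerves. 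I would then check, by unwinding the relevant units and multiplications, that this isomorphism identifies $\mathsf S$, restricted along the fully faithful $N_K$ to its essential image $\KN(\V) \simeq \E$, with the monad $\mathsf T$ itself. Granting this, an $\mathsf S$-algebra structure on $N_K Z$ is the same thing as a $\mathsf T$-algebra structure on $Z$, compatibly with the projections to $\E$, and this yields the desired isomorphism $\conc[\A_\mathsf T] \cong \E^{\mathsf T}$ over $\E$.

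The step I expect to be the main obstacle is precisely this last check: that the isomorphism $S \circ N_K \cong N_K \circ T$ respects the monad structures, and not merely the underlying endofunctors. Matching the units should be a brief diagram chase reducing to naturality of $\eta^{\mathsf T}$; but matching the multiplications requires comparing the composition law of the Kleisli category $\Kl{T}$ underlying $\mathsf S$ against the multiplication of $\mathsf T$, and this is where most of the bookkeeping lives. If a monad-level comparison proves awkward, an alternative would be to show directly that the canonical comparison functor $\E^{\mathsf T} \to \conc[\A_\mathsf T]$, sending an algebra $Y$ to $(U^{\mathsf T} Y, N_{K_\mathsf T} Y)$, is an equivalence over $\E$ --- with the isomorphisms above supplying its full fidelity and essential surjectivity --- which again exhibits $\mathsf T$ as $\Psi$ of a pretheory and hence as $\A$-nervous via Corollary~\ref{cor:2}.
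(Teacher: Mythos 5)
The paper offers no proof of this statement at all: it is exactly Weber's nerve theorem, imported as \cite[Theorem~4.10]{Weber2007Familial}. Your argument is therefore necessarily a different route, and it is a sound one. Weber's proof shows that the canonical coequaliser presentation $F^{\mathsf T}TZ \rightrightarrows F^{\mathsf T}Z \to (Z,z)$ of an algebra is $K_{\mathsf T}$-absolute and deduces full fidelity of $N_{K_{\mathsf T}}$ plus the image characterisation; you instead compute the monad $\mathsf S$ induced on $[\A^\mathrm{op},\V]$ by the pretheory $\Phi\mathsf T$ and identify its restriction to $K$-nerves with $\mathsf T$, which meshes more directly with Theorem~\ref{thm:adjunction} and Corollary~\ref{cor:2}. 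Both of your coend computations are correct, and the step you flag as the main obstacle does close: the isomorphism $\phi \colon SN_K \cong N_K T$ obtained by comparing them is the canonical map $(g\colon Kb \to TKa,\ f \colon Ka \to Z) \mapsto Tf\circ g$, its unit axiom reduces to naturality of $\eta^{\mathsf T}$, and its multiplication axiom reduces to naturality of $\mu^{\mathsf T}$ against the Kleisli composition law of $\A_{\mathsf T}$; faithfulness of $N_K$ then transports $\mathsf S$-algebra structures on $N_K Z$ bijectively to $\mathsf T$-algebra structures on $Z$, yielding $\conc[\A_{\mathsf T}] \cong \E^{\mathsf T}$ over $\E$ and hence $\mathsf T \cong \Psi\Phi\mathsf T$. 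Two small cautions. First, $\conc[\A_{\mathsf T}]$ consists of \emph{pairs} $(Z,F)$ with $FJ_{\mathsf T}^{\mathrm{op}} = N_K Z$ on the nose, not merely of $\mathsf S$-algebras whose carrier happens to be a $K$-nerve ($N_K$ need not be injective on objects); your final correspondence is between such pairs and $\mathsf T$-algebras, so nothing is lost, but the intermediate phrasing is loose. Second, your fallback comparison $Y \mapsto (U^{\mathsf T}Y, N_{K_{\mathsf T}}Y)$ does not literally land in the pullback, since $N_{K_{\mathsf T}}(Y)\circ J_{\mathsf T}^{\mathrm{op}}$ is only isomorphic to $N_K U^{\mathsf T}Y$; it must first be rectified by lifting that isomorphism through the discrete isofibration $[J_{\mathsf T}^{\mathrm{op}},1]$, exactly as the paper does in the proof of Theorem~\ref{thm:counit} via Lemma~\ref{lem:cod2}.
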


One may reasonably ask whether the classes of monads with arities and
$\A$-nervous monads in fact coincide. In many cases, this is true; in
particular, in the situation of Example~\ref{ex:11}, where
$K \colon \A \rightarrow \E$ exhibits $\E$ as the free
$\Phi$-cocompletion of $\A$ for some class of colimit-types $\Phi$.
Indeed, this condition implies that a monad $\mathsf{T}$ is
$\A$-nervous precisely when $T$ sends $\Phi$-colimits to
$\Phi$-colimits; since $\Phi$-colimits are $K$-absolute, this in turn
implies that $N_KT$ sends $\Phi$-colimits to colimits, and so is the
left Kan extension of its own restriction along $K$. So in this case,
every $\A$-nervous monad has arities $\A$; so in particular, the two
notions coincide in each of Examples~\ref{ex:2}\ref{item:3},
\ref{item:11}, \ref{item:12}, \ref{item:6}, \ref{item:13},
\ref{item:4} and \ref{item:10}.

However, they do \emph{not} coincide in general. That is, in some
instances of our basic setting, there exist monads which are
$\A$-nervous but do not have arities $\A$. We give three examples of
this. The first two arise in the setting of
Example~\ref{ex:2}\ref{item:1}, and concern the monads for groupoids
and involutive graphs respectively.

\begin{Prop}\label{prop:groupoids}
  The monad $\mathsf{T}$ on
  $\Gph \defeq [\mathbb G_{1}^\mathrm{op},\Set]$ whose algebras are
  groupoids is $\Delta_0$-nervous but does not have
  $\Delta_0$-\induced underlying endofunctor. It follows that
  $\mathsf{T}$ does not have arities $\Delta_0$.
\end{Prop}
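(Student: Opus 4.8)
The plan is to dispatch the three assertions in turn, the crux being the middle one. For \emph{nervosity}: Example~\ref{ex:3} produces a $\Delta_0$-pretheory $\G$ whose category of concrete models is, over $\Gph$, the category of small groupoids, so that $\conc[\G] \cong \Gph^{\mathsf T}$ over $\Gph$. By Theorem~\ref{thm:adjunction}, $\Psi(\G)$ is a monad with $\Gph^{\Psi(\G)} \cong \conc[\G]$ over $\Gph$; hence $\Gph^{\Psi(\G)} \cong \Gph^{\mathsf T}$ over $\Gph$, and so $\mathsf T \cong \Psi(\G)$ by full faithfulness of $\mathrm{Alg}$ in~\eqref{eq:10}. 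Corollary~\ref{cor:2} then gives that $\mathsf T$ is $\Delta_0$-nervous. (Equivalently, one may cite the remark after Proposition~\ref{prop:semtheory} that every monad of the form $\Psi\T$ is $\A$-nervous.)

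The heart of the matter is to show that the underlying endofunctor $T$ of $\mathsf T$ --- which sends a graph $X$ to the free groupoid on $X$ --- is not $\Delta_0$-\induced. By Proposition~\ref{prop:7}, applied to the fully faithful dense inclusion $K \colon \Delta_0 \hookrightarrow \Gph$, the functor $T$ is $\Delta_0$-\induced precisely when it carries $K$-absolute colimits in $\Gph$ to colimits; so it suffices to exhibit one $K$-absolute colimit that $T$ destroys. I would take $P$ to be the pushout in $\Gph$ of the diagram $[1] \xleftarrow{\tau} [0] \xrightarrow{\tau} [1]$, that is, the graph $\bullet \to \bullet \leftarrow \bullet$ obtained by gluing two directed edges at their targets. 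To see that $P$ is $K$-absolute, recall this means $\Gph([m], \thg)$ preserves the colimit for every $[m] \in \Delta_0$: for $[m] = [0], [1]$ it is automatic, these being evaluation at vertices and at edges, while for $m \ge 2$ none of $[0]$, $[1]$, $P$ admits a path of length $m$ (a length-$2$ sub-path would require two composable edges, which neither $[1]$ nor $P$ contains), so $\Gph([m], \thg)$ sends the diagram and its pushout alike to empty sets.

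Next I would check that $T$ does \emph{not} preserve $P$: the free groupoid on a tree is the indiscrete groupoid on its vertices (there is a unique reduced zig-zag between any two), so $TP$ is the indiscrete groupoid on three objects, with $3^2 = 9$ edges, whereas the comparison map $T[1] +_{T[0]} T[1] \to TP$ has as its domain a pushout computed \emph{in} $\Gph$ --- two four-edge indiscrete groupoids on two objects, glued along one common object and its identity but not closed under composition --- which has only $4 + 4 - 1 = 7$ edges, lacking the two edges between the outer vertices. So the comparison is not even a bijection on edges, and $T$ is not $\Delta_0$-\induced. Finally, Proposition~\ref{prop:10} states that every endofunctor with arities $\A$ is $\A$-\induced; since $T$ is not, it does not have arities $\Delta_0$, so by Definition~\ref{def:5} the monad $\mathsf T$ is not a monad with arities $\Delta_0$.

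The main obstacle I anticipate is just the \emph{choice} of the witnessing colimit. The colimits that first suggest themselves --- the wide pushouts $[n] \cong [1] +_{[0]} \cdots +_{[0]} [1]$ of Examples~\ref{ex:8}, or the coequaliser of $[0] \rightrightarrows [1]$ presenting the loop --- are \emph{not} $K$-absolute, precisely because $\Delta_0$ contains the long objects $[m]$ ($m \ge 2$) and long paths fail to be preserved by those colimits. The virtue of the ``glue two edges at a common endpoint'' pushout is that it carries no path of length $\ge 2$ at all, so its $K$-absoluteness costs nothing, while passing to the free groupoid still conflates it by creating a composite across the new vertex. Everything else is a short and routine check.
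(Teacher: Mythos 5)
Your proof is correct and follows essentially the same route as the paper: the same $K$-absolute pushout $[1] \mathbin{{}_\tau+_\tau} [1]$ of two edges glued at their common target, the same observation that the free-groupoid functor creates a composite $a \to c$ not present in $T[1]+_{T[0]}T[1]$ (you count $7$ versus $9$ edges where the paper points at the missing edge $s^{-1}\circ r$), and the same appeal to Proposition~\ref{prop:10} for the final implication. Your explicit verification of $K$-absoluteness and of nervosity via Example~\ref{ex:3} and Corollary~\ref{cor:2} merely fills in details the paper leaves implicit.
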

\begin{proof}
  From Example~\ref{ex:3} we know that $\mathsf T$ is
  $\Delta_0$-nervous. To see that $T$ is not $\Delta_0$-\induced,
  consider the graph $X$ with vertices and arrows as to the left in:
  \begin{equation}\label{eq:pushout}
    a \xrightarrow{r} b \xleftarrow{s} c \qquad \qquad
    \cd[@-1em]{
      {[0]} \ar[r]^-{\tau} \ar[d]_{\tau} &
      {[1]} \ar[d]^{s} \\
      {[1]} \ar[r]^-{r} &
      {X}\rlap{ .}
    }
  \end{equation}
  This $X$ is equally the $K$-absolute pushout right above; so if $T$
  were $\Delta_0$-\induced then it would preserve this pushout. But
  $T[1]+_{T[0]} T[1]$ is the graph
  \begin{equation*}
    \cd{
      a  \ar@(ul,dl)_{1_a} \ar@<-3pt>[r]_-{r} & b \ar@<-3pt>[l]_-{r^{-1}}
      \ar@{<-}@<-3pt>[r]_-{s} \ar@(ul,ur)^{1_b} & c \ar@{<-}@<-3pt>[l]_-{s^{-1}}
      \ar@(ur,dr)^{1_c}
    }
  \end{equation*}
  wherein, in particular, there is no edge $a \to c$; while in $TX$ we
  have $s^{-1} \circ r \colon a \rightarrow c$. So the pushout is not
  preserved. This shows that $T$ is not $\Delta_0$-\induced and so, by
  Proposition~\ref{prop:10}, that $\mathsf{T}$ does not have arities
  $\Delta_0$.
\end{proof}

Since the above result exhibits a $\Delta_0$-nervous monad whose
underlying endofunctor is not $\Delta_0$-\induced, we can apply
Theorem~\ref{thm:5} to deduce:

\begin{Cor}\label{cor:notsaturated}
  $K \colon \Delta_0 \hookrightarrow \Gph$ is not a saturated class of
  arities.
\end{Cor}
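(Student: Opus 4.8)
The plan is to obtain the corollary as an immediate consequence of Theorem~\ref{thm:5}, applied contrapositively, together with Proposition~\ref{prop:groupoids}. So suppose, toward a contradiction, that $K \colon \Delta_0 \hookrightarrow \Gph$ \emph{were} a saturated class of arities. Then the hypothesis of Theorem~\ref{thm:5} would be met, so that theorem would tell us that conditions (i)--(iii) there are equivalent for every monad $\mathsf{T} \in \cat{Mnd}(\Gph)$; in particular, every $\Delta_0$-nervous monad on $\Gph$ would have $\Delta_0$-\induced underlying endofunctor.

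Now apply this to the monad $\mathsf{T}$ on $\Gph$ whose algebras are groupoids. Proposition~\ref{prop:groupoids} records exactly that $\mathsf{T}$ is $\Delta_0$-nervous while its underlying endofunctor $T$ is \emph{not} $\Delta_0$-\induced --- the witnessing failure being that the $K$-absolute pushout exhibiting the graph $a \xrightarrow{r} b \xleftarrow{s} c$ as a pushout $[1] +_{[0]} [1]$ is not preserved by $T$. This directly contradicts the implication (i)$\Rightarrow$(ii) of Theorem~\ref{thm:5} that we just derived from the saturation hypothesis. Hence $\Delta_0$ cannot be a saturated class of arities.

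I do not expect any genuine obstacle in carrying this out: all the substantive work has already been done, on the one hand in proving Theorem~\ref{thm:5}, and on the other in Proposition~\ref{prop:groupoids}, where the concrete separating example (the monad for groupoids, together with a $K$-absolute pushout it fails to preserve) was produced. The only point requiring a little care is to invoke the correct implication: it is precisely the passage ``$\Delta_0$-nervous $\Rightarrow$ $\Delta_0$-\induced'' that is conditional on saturation and that fails here, whereas the converse implication, and indeed ``has arities $\Delta_0$ $\Rightarrow$ $\Delta_0$-\induced'' from Proposition~\ref{prop:10}, hold unconditionally and so are in no tension with the example.
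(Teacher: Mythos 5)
Your argument is correct and is exactly the paper's: the corollary is deduced by applying Theorem~\ref{thm:5} contrapositively to the groupoid monad of Proposition~\ref{prop:groupoids}, which is $\Delta_0$-nervous but not $\Delta_0$-\induced. No further comment is needed.
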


Our second example, originally due to
Melli\`es~\cite[Appendix~III]{Mellies2010Segal}, shows that even
monads with $\Delta_0$-\induced endofunctor need not have arities
$\Delta_0$. In this example, we call a graph
$s, t \colon X_{1} \rightrightarrows X_{0}$ \emph{involutive} if it
comes endowed with an order-$2$ automorphism
$i \colon X_1 \rightarrow X_1$ reversing source and target, i.e., with
$si = t$ (and hence also $ti = s$).

\begin{Prop}\label{prop:12}
  The monad $\mathsf{T}$ on
  $\Gph \defeq [\mathbb G_{1}^\mathrm{op},\Set]$ whose algebras are
  involutive graphs is $\Delta_0$-nervous and has $\Delta_0$-\induced
  underlying endofunctor, but does not have arities $\Delta_0$.
\end{Prop}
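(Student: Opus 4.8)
The plan is to establish the three assertions in turn, in parallel to the proof of Proposition~\ref{prop:groupoids}. For the first, that $\mathsf{T}$ is $\Delta_0$-nervous, I would construct a $\Delta_0$-pretheory $J \colon \Delta_0 \to \T$ whose concrete models are exactly involutive graphs, just as in Example~\ref{ex:3}: starting from the initial $\Delta_0$-pretheory, adjoin a single morphism $c \colon [1] \to [1]$ together with the two equations $c\sigma = \tau$ and $cc = \id$, where $\sigma,\tau \colon [0] \rightrightarrows [1]$ are the endpoint inclusions. A concrete $\T$-model then unwinds to a graph $X$ together with a map $[c] \colon X_1 \to X_1$ satisfying $s \cdot [c] = t$ and $[c] \cdot [c] = \id$, i.e., precisely an involutive graph; so the category of concrete $\T$-models is, over $\Gph$, exactly the category of $\mathsf{T}$-algebras, which forces $\Psi(\T) \cong \mathsf{T}$, whence $\mathsf{T}$ is $\Delta_0$-nervous by Corollary~\ref{cor:2}.

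For the second assertion, that the endofunctor $T$ is $\Delta_0$-\induced, I would note the natural isomorphism $TX \cong X +_{X_0^{\mathrm{disc}}} X^{\mathrm{op}}$, where $X^{\mathrm{op}}$ is $X$ with source and target interchanged, $X_0^{\mathrm{disc}}$ is the discrete graph on the vertices of $X$, and the pushout is taken along the evident inclusions of $X_0^{\mathrm{disc}}$. Each of the functors $\id_\Gph$, $(\thg)^{\mathrm{op}}$ and $(\thg)_0^{\mathrm{disc}}$ preserves all colimits---the second because it is an isomorphism of categories, the third because it is the composite of the colimit-preserving functors $X \mapsto X_0$ and the discrete-graph functor $\Set \to \Gph$---so in particular each preserves $K$-absolute colimits and is hence $\Delta_0$-\induced by Proposition~\ref{prop:7}. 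Since colimits commute with colimits, the pointwise pushout $T$ again preserves $K$-absolute colimits, so is $\Delta_0$-\induced.

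For the third assertion I would invoke Proposition~\ref{prop:10}: it suffices to exhibit a single $K$-absolute colimit that $T$ carries to a colimit which is not $K$-absolute. I would reuse the cospan $[1] \xleftarrow{\tau} [0] \xrightarrow{\tau} [1]$ from the proof of Proposition~\ref{prop:groupoids}, whose pushout $X$ is the graph $a \xrightarrow{r} b \xleftarrow{s} c$; a quick check that $\Gph([n],\thg)$ preserves this pushout for every $[n] \in \Delta_0$---immediate for $[0]$ and $[1]$, and trivially true for $n \geq 2$ since $X$ has no path of length $\geq 2$---shows it is $K$-absolute. As $T$ is $\Delta_0$-\induced it preserves this pushout, so $TX \cong T[1] +_{T[0]} T[1]$; but in $TX$ there is a length-two path $a \to b \to c$, with first edge $r$ lying in the first summand and second edge the reverse of $s$ lying in the second, which therefore lies in the image of $\Gph([2],T[1])$ for neither summand. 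Hence the comparison map $\Gph([2],T[1]) +_{\Gph([2],T[0])} \Gph([2],T[1]) \to \Gph([2],TX)$ fails to be surjective, so $\Gph([2],\thg)$ does not preserve the pushout, and $T[1] +_{T[0]} T[1]$ is not $K$-absolute.

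The only step needing real care is the second: one must pin down the underlying endofunctor of the free involutive-graph monad precisely, in the form $TX \cong X +_{X_0^{\mathrm{disc}}} X^{\mathrm{op}}$ naturally in $X$, and confirm that the class of $\Delta_0$-\induced endofunctors is closed under pointwise colimits (which follows since colimits commute). The first and third steps are essentially bookkeeping, the one subtlety in the third being to check honestly that the offending length-two path does not factor through either leg of the pushout.
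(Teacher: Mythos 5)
Your proof is correct and follows essentially the same route as the paper's: the same explicit description of $T$ (free reversal of edges) yields $\Delta_0$-inducedness via cocontinuity, and the same $K$-absolute pushout $[1]+_{[0]}[1] = (a\to b\leftarrow c)$, evaluated under $\Gph([2],T\thg)$, witnesses the failure to have arities $\Delta_0$ (the paper rules out the factorisation of the offending path by looking at vertices rather than edges, but both arguments work). Your explicit pretheory presentation of involutive graphs, giving nervousness via Corollary~\ref{cor:2}, is a welcome filling-in of a step the paper's proof leaves implicit.
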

\begin{proof}
  The value of $T$ at $s,t \colon X_1 \rightrightarrows X_0$ is given
  by $\spn{s,t}, \spn{t,s} \colon X_1 + X_1 \rightrightarrows X_0$. It
  follows that $T$ is \emph{cocontinuous} and so certainly
  $\Delta_0$-\induced. To see it does not have arities $\Delta_0$,
  consider again the graph~\eqref{eq:pushout} and its $K$-absolute
  pushout presentation. If this were preserved by
  $N_KT \colon \Gph \rightarrow [\Delta_0^\mathrm{op}, \cat{Set}]$
  then, on evaluating at $[2]$, the maps
  $\Gph([2], T[1]) \rightrightarrows \Gph([2], TX)$ given by
  postcomposition with $Tr$ and $Ts$ would be jointly surjective. To
  show this is not so, consider the map $f \colon [2] \rightarrow TX$
  picking out the composable pair
  $(r \colon a \rightarrow b, i(s) \colon b \rightarrow c)$. Since
  neither $Tr$ nor $Ts$ are surjective on objects, the
  bijective-on-objects $f$ cannot factor through either of them. This
  shows that $\mathsf{T}$ does not have arities $\Delta_0$.
\end{proof}

Our final example shows that not even free monads on
$\A$-signatures---which are $\A$-nervous by Theorem~\ref{thm:7}
above---need necessarily have arities $\A$.

\begin{Prop}
  \label{prop:9}
  Let $\V = \E = \cat{Set}$ and let $\A$ be the one-object full
  subcategory on a two-element set. The free monad on the terminal
  $\A$-signature does not have $\A$-\induced underlying endofunctor
  and therefore does not have arities $\A$.
\end{Prop}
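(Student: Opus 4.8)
The plan is to mimic the argument of Example~\ref{ex:10}. The first step is to identify the monad concretely. The terminal $\A$-signature $\Sigma$ has $\Sigma(2) = 1$, so that (as recalled after Theorem~\ref{thm:7}) an algebra for the free monad $\mathsf{T} = F\Sigma$ is a set $X$ equipped with a map $\E(2,X) \to \E(1,X)$, i.e.\ a single binary operation $X^2 \to X$ subject to no equations. Thus $\mathsf{T}$ is the free-magma monad: its value $TX$ at a set $X$ is the set of finite non-empty binary trees with leaves labelled by elements of $X$, and the functorial action $Tf$ of a map $f\colon X \to Y$ simply relabels each leaf $x$ by $f(x)$; in particular, the set of leaf-labels appearing in $Tf(t)$ is contained in the image of $f$.

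Next I would recall, exactly as in Example~\ref{ex:10}, that if $T$ were $\A$-\induced then $T \cong \mathrm{Lan}_K(TK)$ with $TK \in [\A,\cat{Set}]$; since every object of $[\A,\cat{Set}]$ admits an epimorphic cover by copies of the unique representable $\A(2,\thg)$, and $\mathrm{Lan}_K$ preserves epimorphisms, $T$ would admit an epimorphic cover by copies of $(\thg)^2 \cong \mathrm{Lan}_K\A(2,\thg)$. Unwinding this through the Yoneda lemma for the representable functor $(\thg)^2$, it amounts to the assertion that for every set $X$, every element $u \in TX$ is of the form $Tf(t)$ for some $f \colon 2 \to X$ and some $t \in T(2)$.

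Finally I would exhibit a witness defeating this assertion. Take $X = 3 = \{0,1,2\}$ and let $u \in T(3)$ be a binary tree whose three leaves carry the three distinct labels $0$, $1$, $2$ (for instance the tree $(0\cdot 1)\cdot 2$). For any $f \colon 2 \to 3$ the image $f(\{0,1\})$ has at most two elements, so the leaf-labels of $Tf(t)$ lie in a set of at most two elements for every $t \in T(2)$; hence $Tf(t) \neq u$. Therefore $T$ is not $\A$-\induced, and so by Proposition~\ref{prop:10} --- every endofunctor with arities $\A$ is $\A$-\induced --- the monad $\mathsf{T}$ does not have arities $\A$. The only point that needs a little care is the reduction of ``$T$ is $\A$-\induced'' to the existence of an epimorphic cover of $T$ by copies of $(\thg)^2$; but this is precisely the observation already made in Example~\ref{ex:10}, so it can be quoted rather than reproved, and the argument is otherwise routine.
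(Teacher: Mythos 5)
Your proof is correct and follows essentially the same route as the paper: identify the free monad on the terminal $\A$-signature as the binary-tree (free-magma) monad, and then run the epimorphic-cover argument of Example~\ref{ex:10} to rule out $\A$-\induced{}ness. The only (immaterial) difference is the witness: the paper isolates the coproduct summand $(\thg)^4$ of $T$ and quotes Example~\ref{ex:10} directly, whereas you unwind the cover condition to ``every $u \in TX$ has the form $Tf(t)$ with $f \colon 2 \to X$'' and defeat it with a tree in $T(3)$ carrying three distinct leaf labels.
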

\begin{proof}
  The algebras for the free monad $\mathsf{T}$ on the terminal
  signature are sets equipped with a binary operation. Elements of the
  free $\mathsf{T}$-algebra on $X$ are binary trees with leaves
  labelled by elements of $X$, yielding the formula
  \begin{equation*}
    TX = \textstyle\sum_{n \in \mathbb{N}} C_n \times X^{n+1}
  \end{equation*}
  where $C_n$ is the $n$th Catalan number. In particular, $T$ contains
  at least one coproduct summand $(\thg)^4$ and so, as in
  Example~\ref{ex:10}, is not $\A$-\induced; in particular, by
  Proposition~\ref{prop:10}, it does not have arities $\A$.
\end{proof}

\subsection{Theories with arities $\A$ versus $\A$-theories}
\label{sec:theor-with-arit}

The paper \cite{Berger2012Monads} introduced \emph{theories with
  arities} $\A$. These are $\A$-pretheories $J \colon \A \to \T$ for
which the composite
\begin{equation}\label{eq:theorywitharities}
  [\A^\mathrm{op},\V] \xrightarrow{\mathrm{Lan}_J} [\T^\mathrm{op},\V]
  \xrightarrow{[J^\mathrm{op}, 1]} [\A^\mathrm{op},\V]
\end{equation}
takes $K$-nerves to $K$-nerves. This functor takes the representable
$\A(\thg, x)$ to $\T(J\thg, x)$, so that in this language, we may
describe the $\A$-theories as the pretheories for
which~\eqref{eq:theorywitharities} takes each representable to a
$K$-nerve. It follows that:

\begin{Prop}
  Theories with arities $\A$ are $\A$-theories.
\end{Prop}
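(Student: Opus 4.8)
The plan is to reduce everything to the reformulation recorded immediately above: a pretheory $J \colon \A \to \T$ is an $\A$-theory exactly when the composite~\eqref{eq:theorywitharities} sends each representable $\A(\thg, x) \in [\A^\mathrm{op}, \V]$ to a $K$-nerve, since that composite carries $\A(\thg, x)$ to $\T(J\thg, x)$. Granting this reformulation, the whole argument rests on a single observation: every representable $\A(\thg, x)$ is itself a $K$-nerve. Once that is in hand, a theory with arities $\A$---for which~\eqref{eq:theorywitharities} preserves $K$-nerves by definition---automatically sends each $\A(\thg, x)$ to a $K$-nerve, hence is an $\A$-theory.

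So the one substantive step is to check that, for each object $x$ of $\A$, the presheaf $\A(\thg, x)$ lies in the essential image of $N_K \colon \E \to [\A^\mathrm{op}, \V]$. For this I would simply invoke the full faithfulness of $K \colon \A \hookrightarrow \E$: the comparison maps $\A(a, x) \to \E(Ka, Kx)$ are isomorphisms, natural in $a$, and so assemble into an isomorphism $\A(\thg, x) \cong \E(K\thg, Kx) = N_K(Kx)$ of presheaves. Thus $\A(\thg, x)$ is the $K$-nerve of the object $Kx \in \E$.

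It then remains only to record why~\eqref{eq:theorywitharities} behaves as the reformulation claims. Since $\mathrm{Lan}_J \colon [\A^\mathrm{op}, \V] \to [\T^\mathrm{op}, \V]$ is left adjoint to restriction $[J^\mathrm{op}, 1]$, it carries the representable $\A(\thg, x)$ to the representable $\T(\thg, Jx)$; and because $J$ is identity on objects we have $Jx = x$, so that applying $[J^\mathrm{op},1]$ returns $\T(J\thg, x)$. Chaining with the previous step: if $J$ is a theory with arities $\A$, then $\T(J\thg, x)$, being the image of the $K$-nerve $\A(\thg, x)$ under a functor that preserves $K$-nerves, is a $K$-nerve for every $x$; by Definition~\ref{def:1} this says precisely that $J$ is an $\A$-theory. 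There is really no obstacle here---the result is essentially a tautology---and the only place demanding any care is the (entirely standard) enriched bookkeeping identifying $\A(\thg, x)$ with $N_K(Kx)$ and $\mathrm{Lan}_J\,\A(\thg, x)$ with $\T(\thg, Jx)$.
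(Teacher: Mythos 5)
Your proof is correct and follows exactly the paper's argument: the only substantive observation is that each representable $\A(\thg,x) \cong \E(K\thg,Kx) = N_K(Kx)$ is a $K$-nerve, whence a $K$-nerve-preserving composite~\eqref{eq:theorywitharities} sends it to the $K$-nerve $\T(J\thg,x)$, which is the defining condition for an $\A$-theory. The extra bookkeeping you supply (that $\mathrm{Lan}_J$ carries $\A(\thg,x)$ to $\T(\thg,x)$ and restriction along $J^{\mathrm{op}}$ then yields $\T(J\thg,x)$) is the step the paper records in the sentence immediately preceding the proposition, so the two arguments coincide.
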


\begin{proof}
  It suffices to observe that each representable $\A(\thg, x)$ is a
  $K$-nerve since $\A(\thg,x) \cong \E(K\thg,Kx) = N_{K}(Kx)$.
\end{proof}

Theorem 3.4 of~\cite{Berger2012Monads} establishes an equivalence
between the categories of monads with arities $\A$ and of theories
with arities $\A$. The functor taking a monad with arities to the
corresponding theory with arities is defined in the same way as
the~$\Phi$ of Section~\ref{sec:from-monads-preth}, and so it follows
that:

\begin{Prop}
  \label{prop:11}
  The equivalence of monads with arities $\A$ and theories with
  arities $\A$ is a restriction of the equivalence between
  $\A$-nervous monads and $\A$-theories.
\end{Prop}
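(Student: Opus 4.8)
The statement should be read as saying that the full inclusions of the category of monads with arities $\A$ into $\nerv$ (valid by Theorem~\ref{thm:11}) and of the category of theories with arities $\A$ into $\th$ (valid by the preceding Proposition) are compatible with the equivalences; that is, that $\Phi$ and $\Psi$ of~\eqref{eq:eq} restrict to an adjoint equivalence between monads with arities $\A$ and theories with arities $\A$, and that this restriction coincides with the equivalence of~\cite[Theorem~3.4]{Berger2012Monads}. The plan is to establish this in three short steps.

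First I would note that $\Phi$ restricts to a functor from monads with arities to theories with arities. This is immediate: the functor of~\cite[Theorem~3.4]{Berger2012Monads} from monads with arities $\A$ to theories with arities $\A$ is, by construction, the restriction of our $\Phi$ --- as recalled in the discussion preceding the present Proposition --- and \emph{ibid.}\ guarantees that it is well defined and an equivalence.

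Second I would show that $\Psi$ restricts to a functor from theories with arities to monads with arities. Given a theory with arities $J \colon \A \to \T$, essential surjectivity of Berger's functor provides a monad $\mathsf{T}$ with arities $\A$ together with an isomorphism $\T \cong \Phi\mathsf{T}$ in $\preth$. Since $\mathsf{T}$ is $\A$-nervous, Theorem~\ref{thm:counit} makes the counit $\varepsilon_\mathsf{T} \colon \Psi\Phi\mathsf{T} \to \mathsf{T}$ invertible, whence $\Psi\T \cong \Psi\Phi\mathsf{T} \cong \mathsf{T}$ in $\mnd$. Because ``having arities $\A$'' asks only that a specified endofunctor be the pointwise left Kan extension of its own restriction along $K$, it is stable under isomorphism of monads, so $\Psi\T$ is a monad with arities $\A$.

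Finally I would assemble these observations: $\Phi$ and $\Psi$ restrict to an adjoint pair between the two smaller categories, and since the unit and counit of~\eqref{eq:eq} are invertible, so are their restrictions; hence the restricted adjunction is an adjoint equivalence. Its monad-to-theory leg is the restriction of $\Phi$, which is exactly Berger's functor, and since the inverse of an equivalence is determined up to natural isomorphism, the other leg agrees with Berger's inverse functor --- so the two equivalences coincide. I do not anticipate a serious obstacle here: the argument is in essence the bookkeeping fact that Berger's functor \emph{is} a restriction of $\Phi$, combined with the triviality that the property of having arities $\A$ is invariant under isomorphism of monads. The one place where~\cite{Berger2012Monads} does genuine work is in supplying the essential surjectivity used in the second step, since we have given no intrinsic characterisation of theories with arities among $\A$-theories.
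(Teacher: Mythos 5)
Your proposal is correct and takes essentially the same route as the paper: the paper simply observes that Berger's monad-to-theory functor is by construction the restriction of $\Phi$ and asserts the proposition follows, while you carefully fill in the bookkeeping (that $\Psi$ also restricts, via essential surjectivity of Berger's equivalence and isomorphism-invariance of having arities $\A$, and that the restricted unit and counit remain invertible). Your elaboration is accurate and adds nothing that conflicts with the paper's argument.
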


In particular, there exist $\A$-theories which are not theories with
arities $\A$; it is this statement which was verified in
in~\cite[Appendix~III]{Mellies2010Segal}.

\subsection{Colimits of monads with arities}
\label{sec:colimits-monads-with}

In Theorem~\ref{thm:7} we saw that the $\A$-nervous monads are the
closure of the free monads on $\A$-signatures under colimits in
$\mnd$. Since colimits of monads are algebraic, this allows us to give
intuitive presentations for $\A$-nervous monads as suitable colimits
of frees. The pretheory presentations of
Section~\ref{sec:preth-as-pres} can be understood as particularly
direct descriptions of such colimits.

Since not every $\A$-nervous monad has arities $\A$, the monads with
arities are \emph{not} the colimit-closure of the frees on signatures.
We already saw one explanation for this in Proposition~\ref{prop:9}:
the free monads on signatures need not have arities. However, this
leaves open the possibility that the monads with arities $\A$ are the
colimit-closure of some smaller class of basic monads---which would
allow for the same kind of intuitive presentation as we have for
$\A$-nervous monads. The following result shows that even this is not
the case.

\begin{Thm}
  \label{thm:12}
  Monads with arities $\A$ need not be closed in $\mnd$ under
  colimits.
\end{Thm}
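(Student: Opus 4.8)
The plan is to exhibit an explicit counterexample, in the simplest possible setting where the monads with arities $\A$ form a proper subclass of the $\A$-nervous monads. The natural candidate is the setting of Example~\ref{ex:2}\ref{item:1}, i.e.\ $\E = \Gph = [\mathbb G_1^\mathrm{op},\Set]$ with $\A = \Delta_0$, since Propositions~\ref{prop:groupoids} and~\ref{prop:12} have already identified concrete $\Delta_0$-nervous monads there (the monads for groupoids and for involutive graphs) which do \emph{not} have arities $\Delta_0$. The idea is to realise such a ``bad'' monad as a colimit in $\mnd$ of monads which \emph{do} have arities $\Delta_0$; then, since the bad monad lies outside the class of monads with arities, that class is not closed under the colimit in question.

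First I would pick a workable colimit presentation. The monad $\mathsf T$ for involutive graphs (as in Proposition~\ref{prop:12}) is a good target: its underlying endofunctor $X \mapsto X_1 + X_1$ over $X_0$ is cocontinuous, hence certainly $\Delta_0$-\induced, so the obstruction to having arities is subtle and isolated. I would present $\mathsf T$ as a coequaliser (or a pushout) in $\mnd$ of free monads on finite colimits of representable signatures: concretely, $\mathsf T$ is generated by one unary operation $i$ (the involution, of arity $[1]\to[1]$) subject to the relations $si = t$, $ti = s$, $ii = \mathrm{id}$. Each of the free monads appearing in such a presentation is a free monad on an $\A$-signature built from \emph{representable} arities; the key sub-claim is that \emph{these particular} free monads have arities $\Delta_0$. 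This should follow because a free monad on a signature $\Sigma$ with $\Sigma a$ a coproduct of representables has underlying endofunctor a polynomial-type functor built from representables, and one checks directly (using Proposition~\ref{prop:10}, i.e.\ preservation of $K$-absolute colimits) that $N_K$ of such a functor is a left Kan extension along $K$ — the failure in Proposition~\ref{prop:12} comes only from the \emph{equations}, not from the free operations themselves. Similarly, quotients by relations which themselves are comparisons of representable arities remain monads with arities, so the only genuine jump happens at the colimit step that imposes the involutivity relation $ii=\mathrm{id}$.

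The main obstacle, and the step requiring real care, is verifying that each building-block monad in the presentation genuinely has arities $\Delta_0$ while the colimit does not. The ``does not'' half is already done — it is exactly Proposition~\ref{prop:12} (or Proposition~\ref{prop:groupoids}). The ``does'' half needs a clean argument that the relevant free and finitely-presented monads send $K$-absolute colimits in $\Gph$ to $K$-absolute colimits; here the $K$-absolute colimits in $\Gph$ are the wide pushouts $[n]\cong [1]+_{[0]}\cdots +_{[0]}[1]$ and their retracts (from Examples~\ref{ex:8}\ref{item:7}), so one must check preservation on these specific diagrams. For a free monad on representable operations this is a direct diagram chase; the subtlety is ensuring that imposing relations like $si=t$ (which are equations between maps whose domains and codomains are representables, hence $K$-absolute data) does not destroy the property — this uses that $N_K$ applied to such a coequaliser-of-representable-functors is still computed pointwise as a coequaliser and that $K$-absoluteness is detected representably. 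Once these verifications are in hand, the theorem follows immediately: we have a diagram in $\mnd$ valued in monads with arities $\Delta_0$ whose colimit in $\mnd$ is a monad without arities $\Delta_0$, so the subcategory of monads with arities is not closed under colimits in $\mnd$.

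I would close by remarking that the same phenomenon can alternatively be extracted from the groupoid monad of Proposition~\ref{prop:groupoids} via its standard presentation (composition, identities, inverses, plus the category and inverse laws), which again is a colimit of frees-on-representable-arities — each of which has arities $\Delta_0$ — whose colimit does not; this gives a second, equally explicit witness and makes clear the failure is not an artefact of one peculiar choice.
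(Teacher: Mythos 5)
Your overall strategy is the same as the paper's: exhibit the involutive-graph monad of Proposition~\ref{prop:12}, which is known not to have arities $\Delta_0$, as a coequaliser in $\cat{Mnd}(\Gph)$ of monads that do have arities. The difference is in the building blocks. The paper does not use free monads on signatures at all; it constructs two bespoke monads $\mathsf P$ and $\mathsf Q$ (for graphs with a map $X_1 \to X_0$, resp.\ with an order-$2$ automorphism of $X_1$), computes $PX = X + X_1\cdot[0]$ and $QX = X + X_1\cdot[1]$ explicitly, and reads off from these formulas that each $\Gph([n],P\thg)$ and $\Gph([n],Q\thg)$ is a colimit of representables, hence sends $K$-absolute colimits to colimits. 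Note in particular that in the paper's presentation the involutivity $ii=\id$ is already built into $\mathsf Q$ (which has arities), and it is the source--target compatibility that is imposed at the coequaliser step --- the opposite of your remark that ``the only genuine jump happens at the colimit step that imposes $ii=\id$''.

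The genuine gap is your key sub-claim that the free monads on your signatures have arities $\Delta_0$, which you justify by the general principle that ``a free monad on a signature with $\Sigma a$ a coproduct of representables'' has arities. That principle is false: Proposition~\ref{prop:9} exhibits a free monad on a signature that does not have arities, and the paragraph preceding Theorem~\ref{thm:12} explicitly flags that ``the free monads on signatures need not have arities'' as the reason a naive presentation argument is unavailable. So this step cannot be waved through; it must be verified by hand for your specific signatures. It does in fact hold here --- e.g.\ the free monad on the bare operation $i\colon[1]\to[1]$ has $(F\Sigma X)_1 \cong X_1\times\mathbb N$ with every freely adjoined edge having fresh, isolated endpoints, so that $\Gph([n],F\Sigma X)\cong\Gph([n],X)$ for $n\geq 2$ and is a coproduct of representables for $n=0,1$ --- but this computation is the entire content of the proof, and it is noticeably messier than the paper's (your free monads are infinitary where $\mathsf P$ and $\mathsf Q$ add only finitely much per edge). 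Also be careful with the test you propose: the wide pushouts $[n]\cong[1]+_{[0]}\cdots+_{[0]}[1]$ are $I$-absolute but \emph{not} $K$-absolute (they are not preserved by $\Gph([2],\thg)$), so ``checking preservation on these specific diagrams'' is not what Proposition~\ref{prop:10} asks for; the correct argument is the one the paper uses, namely expressing each $\Gph([n],T\thg)$ as a colimit of representables.
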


\begin{proof}
  We saw in Proposition~\ref{prop:12} that, when $\E = \Gph$ and
  $\A = \Delta_0$, the monad $\mathsf{T}$ for involutive graphs does
  not have arities $\Delta_0$. To prove the result it will therefore
  suffice to exhibit $\mathsf{T}$ as a colimit in $\cat{Mnd}(\Gph)$ of
  a diagram of monads with arities $\Delta_0$. This diagram will be a
  coequaliser involving a pair of monads $\mathsf{P}$ and
  $\mathsf{Q}$, whose respective algebras are:
  \begin{itemize}[itemsep=0\baselineskip]
  \item For $\mathsf{P}$: graphs $X$ endowed with a function
    $u \colon X_1 \rightarrow X_0$;
  \item For $\mathsf{Q}$: graphs $X$ endowed with an order-$2$
    automorphism $i \colon X_1 \rightarrow X_1$.
  \end{itemize}
  We construct this coequaliser of monads in terms of the categories
  of algebras. The category $\Gph^\mathsf{T}$ of involutive graphs is
  an equaliser in $\cat{CAT}$ as to the left in:
  \begin{equation*}
    \cd{
      \Gph^\mathsf{T} \ar@{ >->}[r]^-{E} & \Gph^\mathsf{Q}
      \ar@<3pt>[r]^-{F} \ar@<-3pt>[r]_-{G} & \Gph^\mathsf{P} & &
      \mathsf{P} \ar@<3pt>[r]^-{\varphi} \ar@<-3pt>[r]_-{\gamma} &
      \mathsf{Q} \ar@{->>}[r]^-{\varepsilon} & \mathsf{T}
    }
  \end{equation*}
  where the functors $F$ and $G$ send a $\mathsf{Q}$-algebra $(X,i)$ to
  the respective $\mathsf{P}$-algebras $(X,si)$ and $(X,t)$. Since each
  of these functors commutes with the the forgetful functors to $\Gph$,
  we have an equaliser of forgetful functors in $\CAT/\Gph$. Since the
  functor
  $\mathrm{Alg} \colon \cat{Mnd}(\Gph)^\mathrm{op} \to \CAT/\Gph$ is
  fully faithful, this equaliser must be the image of a coequaliser
  diagram in $\cat{Mnd}(\Gph)$ as right above.

  It remains to show that in this coequaliser presentation both
  $\mathsf{P}$ and $\mathsf{Q}$ have arities $\Delta_0$. By
  Proposition~\ref{prop:7}, this means showing that $N_KP$ and $N_KQ$
  send $K$-absolute colimits to colimits, or equally, that each
  $\Gph([n], P\thg)$ and $\Gph([n], Q\thg)$ sends $K$-absolute colimits
  to colimits. To see this, we calculate $P$ and $Q$ explicitly. On the
  one hand, the free $\mathsf{P}$-algebra on a graph $X$ is obtained by
  freely adjoining an element $u(f)$ to $X_0$ for each $f \in X_1$. On
  the other hand, the free $\mathsf{Q}$-algebra on $X$ is obtained by
  freely adjoining an element $i(f) \in X_1$ for each $f \in X_1$. Thus
  we have
  \begin{equation*}
    PX = X + X_1 \cdot [0] \qquad \text{and} \qquad QX = X + X_1 \cdot [1]
  \end{equation*}
  where we use $\cdot$ to denote copower. Since each $[n] \in \Gph$ is
  connected, and since each hom-set $\cat{Gph}([n],[m])$ has
  cardinality $\max(0,m-n+1)$, we conclude that
  \begin{equation}
    \label{eq:36}
    \begin{aligned}
      \Gph([n], PX) &=
      \begin{cases}
        \Gph([0],X) + \Gph([1], X) \ \ \ \ & \text{ if $n = 0$;}\\
        \Gph([n],X) & \text{ if $n > 0$.}
      \end{cases}\\
      \Gph([n], QX) &=
      \begin{cases}
        \Gph([0],X) + 2 \cdot \Gph([1], X) & \text{ if $n = 0$;}\\
        \Gph([1], X) + \Gph([1], X) & \text{ if $n = 1$;}\\
        \Gph([n],X) & \text{ if $n > 1$.}
      \end{cases}
    \end{aligned}
  \end{equation}

  Now by definition, $N_K$ sends $K$-absolute colimits to colimits,
  whence also each $\Gph([n], \thg) \colon \Gph\rightarrow \cat{Set}$.
  The functors with this property are closed under colimits in
  $[\Gph, \Set]$, and so~\eqref{eq:36} ensures that each
  $\Gph([n], P\thg)$ and $\Gph([n], Q\thg)$ sends $K$-absolute colimits
  to colimits as desired.
\end{proof}

It is not even clear to us if the category of monads with arities $\A$
is always cocomplete. The argument for local presentability of $\nerv$
in Theorem~\ref{thm:7} does not seem to adapt to the case of monads
with arities, and no other obvious argument presents itself. In any
case, the preceding result shows that, even if the category of monads
with arities does have colimits, they do not always coincide with the
usual colimits of monads, and, in particular, are not always
\emph{algebraic}. This dashes any hope we might have had of giving a
sensible notion of presentation for monads with arities.

\section{Deferred proofs}
\label{sec:deferred-proofs}

\subsection{Identifying the monads}
\label{sec:proofs-from-section}
In this section, we complete the proofs of the results deferred from
Section~\ref{sec:identifying-theories} above, beginning with
Theorem~\ref{thm:7}. Recall that the category $\sig$ of
\emph{signatures} is the (ordinary) category
$\V\text-\cat{CAT}(\ob \A, \E)$, and that
$V \colon \mnd \rightarrow \sig$ is the functor sending $\mathsf{T}$
to $(Ta)_{a \in \A}$.

\begin{Prop}
  \label{prop:8}
  $V \colon \mnd \rightarrow \sig$ has a left adjoint $F$ which takes
  values in $\A$-nervous monads.
\end{Prop}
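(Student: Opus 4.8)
The plan is to construct $F$ by routing through the monad--pretheory adjunction of Theorem~\ref{thm:adjunction}: to each signature $\Sigma$ I will attach an $\A$-pretheory $\T_\Sigma$ whose concrete models are the ``$\Sigma$-structures'' of the discussion after Definition~\ref{def:3}, set $F\Sigma \defeq \Psi\T_\Sigma$, and deduce the universal property of $F$ from that of $\Psi$. The payoff is that the $\A$-nervousness of $F\Sigma$ then comes for free: $F\Sigma$ lies in the image of $\Psi$, so Corollary~\ref{cor:2} applies.

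First I would build $\T_\Sigma$. Since $\A$ is dense, each object $\Sigma a \in \E$ is the canonical density colimit of the objects of $\A$ lying over it, i.e.\ the coend $\Sigma a \cong \int^{b \in \A} \E(Kb, \Sigma a) \cdot Kb$. Starting from the initial pretheory $\id \colon \A \to \A$, I would for each $a \in \ob \A$ adjoin an ``operation of input arity $\Sigma a$ and output arity $a$'': concretely, adjoin a morphism $b \to a$ for every object $Kb \to \Sigma a$ of the indexing category of that colimit, and then adjoin the equations forcing these morphisms to form a cocone under the diagram, all by means of (an evident $\V$-enrichment of) the ``adjoin a morphism'' and ``adjoin an equation'' constructions of Section~\ref{sec:preth-as-pres-1}. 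Reading off the effect on concrete models exactly as there, a concrete $\T_\Sigma$-model is an object $X \in \E$ equipped, for each $a$, with a compatible family of maps $\E(Ka, X) \to \E(Kb, X)$ assembling into a single map $\E(Ka, X) \to \E(\Sigma a, X)$ in $\V$---that is, precisely a $\Sigma$-structure on $X$. Hence $\conc[\T_\Sigma]$ is the category of $\Sigma$-structures over $\E$, and $\Sigma \mapsto \T_\Sigma$ is evidently functorial.

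Now I would set $F\Sigma \defeq \Psi\T_\Sigma$; this exists by Theorem~\ref{thm:adjunction} as $\E$ is locally presentable, is $\A$-nervous by Corollary~\ref{cor:2}, and has $\E^{F\Sigma} \cong \conc[\T_\Sigma]$ the category of $\Sigma$-structures over $\E$, so that the description of the free monad's algebras claimed after Definition~\ref{def:3} is recovered. For the adjunction $F \dashv V$ I would chain natural bijections
\[
  \mnd(F\Sigma, \mathsf{S}) = \mnd(\Psi\T_\Sigma, \mathsf{S}) \;\cong\; \preth(\T_\Sigma, \Phi\mathsf{S}) \;\cong\; \prod_{a \in \ob \A} \E(\Sigma a, Sa) \;=\; \sig(\Sigma, V\mathsf{S})\rlap{ ,}
\]
natural in $\mathsf{S} \in \mnd$: the first is the definition, the second is the adjunction $\Psi \dashv \Phi$ of Theorem~\ref{thm:adjunction}, and the third is obtained by unwinding the universal property of $\T_\Sigma$. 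Indeed, $\Phi\mathsf{S}$ is the identity-on-objects functor $\A \to \A_{\mathsf S}$ with $\A_{\mathsf S}(b, a) \cong \E(Kb, Sa)$, so a pretheory map $\T_\Sigma \to \Phi\mathsf{S}$ is the same thing as a choice, for each $a$, of a cocone over the indexing diagram of $\Sigma a$ with values in the morphisms $b \to a$ of $\A_{\mathsf S}$; and since $\Sigma a$ is the colimit of that diagram in $\E$, the co-Yoneda lemma identifies such a cocone with a single morphism $\Sigma a \to Sa$ in $\E$.

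The step I expect to be the main obstacle is the second paragraph: making the passage from the object $\Sigma a \in \E$ to an ``operation $\Sigma a \to a$'' of a pretheory precise in the $\V$-enriched setting. The ``adjoin a morphism'' construction of Section~\ref{sec:preth-as-pres-1} is phrased for a single arrow, whereas one now needs to adjoin a whole $\V$-object's worth of arrows (indexed by $\E(Kb, \Sigma a)$) together with the equations that encode the coend; this calls for some care with pushouts along copowers, and with the bookkeeping that reads $\conc[\T_\Sigma]$ and $\preth(\T_\Sigma, \Phi\mathsf{S})$ off the resulting presentation. One could instead take $F\Sigma$ to be the free monad on the accessible endofunctor $X \mapsto \coprod_{a \in \ob \A} \E(Ka, X) \cdot \Sigma a$ obtained by transfinite iteration, but one would still have to identify it with some $\Psi\T$ in order to see that it is $\A$-nervous, so little is saved. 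In any case no genuinely new idea is needed beyond the density of $\A$ and the adjunction $\Psi \dashv \Phi$ already established.
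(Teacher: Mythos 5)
Your proposal is essentially the paper's argument: the paper also constructs, for each signature $\Sigma$, an $\A$-pretheory whose concrete models are the $\Sigma$-structures, applies $\Psi$ to it, and gets $\A$-nervousness for free from the fact that the result lies in the image of $\Psi$. The one genuine obstacle you flag --- adjoining, in the enriched setting, a whole $\V$-object's worth of operations $b \to a$ indexed by $\E(Kb,\Sigma a)$ together with the cocone equations --- is resolved in the paper by a specific device: one forms the \emph{collage} $\B$ of $N_K\Sigma \colon \ob\A \to [\A^{\mathrm{op}},\V]$ (so that $\B(\ell b, ra) = \E(Kb,\Sigma a)$ by construction) and then pushes out $\spn{\ell,r}\colon \A + \ob\A \to \B$ along $\spn{1,\iota}\colon \A + \ob\A \to \A$ to obtain an identity-on-objects $J \colon \A \to \T$; the universal property of collage-plus-pushout then delivers exactly the computation of concrete models you want, via full fidelity of $N_K$. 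The other difference is in how the adjunction $F \dashv V$ is established: you chain bijections $\mnd(\Psi\T_\Sigma,\mathsf S)\cong\preth(\T_\Sigma,\Phi\mathsf S)\cong\prod_a\E(\Sigma a, SKa)$ directly (which is correct, by the same universal property of $\T_\Sigma$ and density), whereas the paper factors $V$ through $\V\text{-}\cat{CAT}(\E,\E)$, notes that the intermediate left adjoint is $F_2(\Sigma)=\sum_a \E(Ka,\thg)\cdot\Sigma a$, and invokes Kelly's characterisation of the free monad on an endofunctor via $\Alg{\mathnormal{F_2(\Sigma)}}\cong\Alg{T}$ over $\E$, so that exhibiting $\Alg{\mathnormal{F_2(\Sigma)}}\cong\conc$ suffices. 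Both routes buy the same thing; the paper's has the small advantage of simultaneously identifying $F\Sigma$ as the free monad on the endofunctor $F_2(\Sigma)$, which is used again later (e.g.\ in Proposition~\ref{prop:3}). So: correct in outline, same strategy, with the collage being the precise answer to the difficulty you correctly anticipated.
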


\begin{proof}
  We can decompose $V$ as the composite
  \begin{equation*}
    \mnd \xrightarrow{V_1} \V\text-\cat{CAT}(\E, \E)
    \xrightarrow{V_2} \sig
  \end{equation*}
  where $V_1$ takes the underlying endofunctor, and $V_2$ is given by
  evaluation at each $a \in \ob \A$. Since $V_2$ is equally given by
  restriction along $\ob \A \rightarrow \A \rightarrow \E$, it has a
  left adjoint $F_2$ given by pointwise left Kan extension, with the
  explicit formula:
  \begin{equation*}
    F_2(\Sigma) = \textstyle\sum_{a \in \A} \E(Ka, \thg) \cdot \Sigma a \colon \E
    \rightarrow \E\rlap{ ,}
  \end{equation*}
  where $\cdot$ denotes $\V$-enriched copower. So it suffices to show
  that the free monad on each endofunctor $F_2(\Sigma)$ exists and is
  $\A$-nervous. By~\cite[Theorem~23.2]{Kelly1980A-unified}, such a free
  monad $\mathsf{T}$ is characterised by the property that
  $\Alg{\mathnormal{F_2(\Sigma)}} \cong \Alg{T}$ over $\E$, where on
  the left we have the $\V$-category of algebras for the mere
  endofunctor $F_2(\Sigma)$. Thus, to complete the proof, it suffices
  by Theorem~\ref{thm:adjunction} to exhibit
  $\Alg{\mathnormal{F_2(\Sigma)}}$ as isomorphic to the $\V$-category
  of concrete models of some $\A$-pretheory.

  To this end, we let $\B$ be the \emph{collage} of the $\V$-functor
  $N_K \Sigma \colon \ob \A \rightarrow [\A^\mathrm{op}, \V]$. Thus
  $\B$ is the $\V$-category with object set $\ob \A + \ob \A$ and the
  following hom-objects, where we write
  $\ell,r \colon \ob \A \rightarrow \ob \B$ for the two injections:
  \begin{align*}
    \B(\ell a', \ell a) &= \A(a',a) & \B(r a', ra) &= (\ob \A)(a',a) \\ \B(\ell a', ra) &=
    \E(Ka', \Sigma a) & \B(ra', \ell a) &= 0\rlap{ .}
  \end{align*}
  Let $\ell \colon \A \rightarrow \B$ and
  $r \colon \ob \A \rightarrow \B$ be the two injections into the
  collage, and now form the pushout $J \colon \A \rightarrow \T$ of
  $\spn{\ell, r} \colon \A + \ob \A \rightarrow \B$ along
  $\spn{1, \iota} \colon \A + \ob \A \rightarrow \A$. Since
  $\spn{\ell, r}$ is identity-on-objects, so is
  $J \colon \A \rightarrow \T$, and so we have an $\A$-pretheory. To
  conclude the proof, it now suffices to show that
  $\E^{F_2(\Sigma)} \cong \cat{Mod}_c(\T)$ over $\E$.

  By the universal property of the collage and the pushout, to give a
  functor $H \colon \T \rightarrow \X$ is equally to give a functor
  $F = HJ \colon \A \rightarrow \X$ together with $\V$-natural
  transformations
  $\alpha_a \colon \E(K\thg, \Sigma a) \Rightarrow \X(F\thg, Fa)$ for
  each $a \in \ob \A$. In particular, taking $\X = \V^\mathrm{op}$ and
  $F = \E(K\thg, X)$, we see that a concrete $\T$-model structure on
  $X \in \E$ is given by an $\ob \A$-indexed family of $\V$-natural
  transformations
  \begin{equation*}
    \alpha_a \colon \E(K\thg, \Sigma a) \Rightarrow [\E(Ka, X),
    \E(K\thg, X)]
  \end{equation*}
  or equally under transpose, by a family of maps
  \begin{equation*}
    \E(Ka, X) \rightarrow [\A^\mathrm{op}, \V](\E(K\thg, \Sigma a),
    \E(K\thg, X))\rlap{ .}
  \end{equation*}
  By full fidelity of $N_K$, the right-hand side above is isomorphic to
  $\E(\Sigma a, X)$, and so concrete $\T$-model structure on $X$ is
  equally given by a family of maps
  $\E(Ka, X) \rightarrow \E(\Sigma a, X)$. Finally, using the universal
  properties of copowers and coproducts, this is equivalent to giving a
  single map
  \begin{equation*}
    \bar \alpha \colon \textstyle\sum_{a \in \A} \E(Ka, X) \cdot \Sigma a \rightarrow X
  \end{equation*}
  exhibiting $X$ as an $F_2(\Sigma)$-algebra. We thus have a bijection
  over $\E$ between objects of $\Alg{\mathnormal{F_2(\Sigma)}}$ and
  objects of $\conc$.

  A similar analysis shows that a morphism $A \rightarrow \E(X,Y)$ in
  $\V$ lifts through the monomorphism
  $\conc((X, \alpha), (Y, \beta)) \rightarrow \E(X,Y)$ if and only if
  it lifts through the monomorphism
  $\E^{F_2(\Sigma)}((X,\bar \alpha),(Y,\bar \beta)) \rightarrow
  \E(X,Y)$. It follows that we have an isomorphism of $\V$-categories
  $\Alg{\mathnormal{F_2(\Sigma)}} \cong \conc$ over $\E$ as desired.
\end{proof}

In proving the rest of Theorem~\ref{thm:7}, the following lemma will
be useful.

\begin{Lemma}
  \label{lem:7}
  Let $\C_1 \subseteq \C_2$ be replete, full, colimit-closed
  sub-$\V$-categories of $\C$; for example, they could be
  coreflective. If $V \colon \C \rightarrow \D$ has a left adjoint $F$
  taking values in $\C_1$, and the restriction
  $\res V {\C_2} \colon \C_2 \rightarrow \D$ is monadic, then
  $\C_1 = \C_2$.
\end{Lemma}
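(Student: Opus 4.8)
The plan is to exploit the interplay between the adjunction $F \dashv V$ and the monadicity of $\res{V}{\C_2}$, together with the closure properties of $\C_1$ and $\C_2$. First I would observe that since $F$ takes values in $\C_1 \subseteq \C_2$, the corestriction $F \colon \D \rightarrow \C_2$ is left adjoint to $\res{V}{\C_2} \colon \C_2 \rightarrow \D$; this is immediate because the universal property of the unit $d \rightarrow VFd$ only involves objects of $\C_2$ once we know $Fd \in \C_2$. Thus $\res{V}{\C_2}$ is a monadic right adjoint, and the induced monad on $\D$ is exactly the one induced by the original adjunction $F \dashv V$ (the monad data is built from $F$, $V$ and the unit, all of which are unchanged by the corestriction). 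In particular, $\C_2$ is equivalent over $\D$ to the Eilenberg--Moore category $\D^{\mathbb{S}}$ of this monad $\mathbb{S} = VF$.

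Next I would show that every object of $\C_2$ lies in $\C_1$. Under the monadic comparison $\C_2 \simeq \D^{\mathbb{S}}$, each $\mathbb{S}$-algebra $(d, a)$ is the coequaliser, computed in $\D^{\mathbb{S}}$ and hence in $\C_2$, of the canonical reflexive pair $FVFd \rightrightarrows Fd$ of \emph{free} algebras; equivalently, every object $c \in \C_2$ is a colimit (indeed a coequaliser of a reflexive pair) in $\C_2$ of objects in the image of $F$. Concretely, $c$ is the coequaliser of $F V F V c \rightrightarrows FVc$ with the usual maps $F\varepsilon$ and $\varepsilon F$, where here $\varepsilon$ denotes the counit of $F \dashv \res{V}{\C_2}$. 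Since $\C_2 \subseteq \C$ is colimit-closed and replete, this coequaliser is also computed in $\C$. Now $FVc$ and $FVFVc$ both lie in $\C_1$ by hypothesis on $F$; and $\C_1 \subseteq \C$ is colimit-closed and replete, so the coequaliser $c$ lies in $\C_1$ as well. Hence $\C_2 \subseteq \C_1$, and combined with the given inclusion $\C_1 \subseteq \C_2$ we conclude $\C_1 = \C_2$.

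The main obstacle, and the step deserving the most care, is the first paragraph: verifying cleanly that the monad induced by the corestricted adjunction $F \dashv \res{V}{\C_2}$ agrees with $\mathbb{S} = VF$, and that monadicity of $\res{V}{\C_2}$ really does present each object of $\C_2$ as a (reflexive) coequaliser of frees \emph{inside} $\C_2$ rather than merely up to the comparison equivalence. Once that is pinned down, the second paragraph is a routine application of the stated colimit-closure of $\C_1$ and $\C_2$ in $\C$. The parenthetical remark that coreflective subcategories are in particular replete, full and colimit-closed is standard and needs no further comment.
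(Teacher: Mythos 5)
Your proof is correct and follows essentially the same route as the paper's: corestrict the adjunction to $\C_2$, use monadicity to present each object of $\C_2$ as a (reflexive) coequaliser of objects in the image of $F$, and then invoke colimit-closure of $\C_2$ and $\C_1$ in $\C$. You merely spell out the canonical presentation more explicitly (note the maps should strictly be written $FV\varepsilon_c$ and $\varepsilon_{FVc}$, a harmless notational slip); otherwise the argument matches the paper's.
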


\begin{proof}
  Since $F$ takes values in $\C_1 \subseteq \C_2$, the left adjoint to
  $\res V{\C_2} \colon \C_1 \rightarrow \D$ is still given by $F$. So
  monadicity of $\res V {\C_2}$ means that each $X \in \C_2$ can be
  written as a coequaliser in $\C_2$, and hence also in $\C$, of
  objects in the image of $F$. Since $\im F \subseteq \C_1$ and since
  $\C_1$ is closed in $\C$ under colimits, it follows that
  $X \in \C_1$.
\end{proof}

\thmseven*

\begin{proof}
  We begin with (i). Let
  $H \colon \preth \rightarrow \V\text-\cat{CAT}(\ob \A,
  [\A^\mathrm{op}, \V])$ be the functor sending a pretheory
  $J \colon \A \rightarrow \T$ to the family of presheaves
  $(\T(J-,Ja))_{a \in \A}$. Since an $\A$-pretheory is a theory just
  when each of these presheaves is a $K$-nerve, we have a pullback
  square as to the right in:
  \begin{equation}\label{eq:17}\hskip-1em
    \cd[@C+0em]{
      {\nerv} \ar[r]^-{\Phi} \ar[d]_{V}
      \twocong{dr}{} &
      {\th} \pullbackcorner \ar[d]^{P} \ar@{ (->}[r] & \preth \ar[d]^{H} \\
      {\V\text-\cat{CAT}(\ob \A, \E)} \ar[r]_-{N_K \circ (\thg)} &
      {\V\text-\cat{CAT}(\ob \A, \KN)} \ar@{ (->}[r] & {\V\text-\cat{CAT}(\ob \A, [\A^\mathrm{op}, \V])}\rlap{ .}
    } 
  \end{equation}
  Since $\KN \hookrightarrow [\A^\mathrm{op}, \V]$ is replete, this
  square is a pullback along a discrete isofibration, and so
  by~\cite[Corollary~1]{Joyal1993Pullbacks} also a bipullback. On the
  other hand, to the left, we have a pseudocommuting square as
  witnessed by the isomorphisms:
  \begin{equation*}
    (PJ_\mathsf{T})(A) = \A_\mathsf{T}(J_\mathsf{T} \thg, J_\mathsf{T}A) = \E^\mathsf{T}(F^\mathsf{T}K\thg,
    F^\mathsf{T}KA) \cong \E(K\thg, TKA) = N_K(TKA)\rlap{ .}
  \end{equation*}
  Since both horizontal edges of this square are equivalences, it is
  also a bipullback.

  To show the required monadicity, we must prove that $V$ creates
  $V$-absolute coequalisers. Since the large rectangle is a
  bipullback---as the pasting of two bipullbacks---it suffices to show
  that $H$ creates $H$-absolute coequalizers. As the definition of $H$
  depends only on $\A$ and not $\E$, we lose no generality in proving
  this if we assume that $\E = [\A^\mathrm{op}, \V]$ and $K = Y$. In
  this case, \emph{every} presheaf on $\A$ is a $K$-nerve, and so the
  horizontal composites in~\eqref{eq:17} are equivalences; and so,
  finally, it suffices to prove that $V$ is monadic when
  $\E = [\A^\mathrm{op}, \V]$ and $K = Y$.

  Note that, in this case, $\A$ is a saturated class of arities: for
  indeed, by the universal property of free cocompletion, a functor
  $F \colon [\A^\mathrm{op}, \V] \rightarrow [\A^\mathrm{op}, \V]$ is
  $\A$-\induced if and only if it is \emph{cocontinuous}. It thus
  follows from Proposition~\ref{prop:3} below that the restriction
  $V_c \colon \cat{Mnd}_c(\E) \rightarrow \sig$ of $V$ to cocontinuous
  monads is monadic; so we will be done if $\cat{Mnd}_c(\E) = \nerv$.
  In this case, $\Psi \colon \preth \rightarrow \nerv$ sends
  $J \colon \A \rightarrow \T$ to a monad which is isomorphic to that
  induced by the adjunction
  $\mathrm{Lan}_J \colon [\A^\mathrm{op}, \V] \leftrightarrows
  [\T^\mathrm{op}, \V] \colon [J^\mathrm{op}, 1]$, and so
  $\nerv \subseteq \cat{Mnd}_c(\E)$. To obtain equality, we apply
  Lemma~\ref{lem:7}. We have that:
  \begin{itemize}[itemsep=0em]
  \item $\nerv$ and $\cat{Mnd}_c(\E)$ are coreflective in $\mnd$ by
    Corollary~\ref{cor:1} and Lemma~\ref{lem:8} respectively;
  \item $V \colon \mnd \rightarrow \sig$ has a left adjoint taking
    values in $\nerv$;
  \item The restriction $V_c \colon \cat{Mnd}_c(\E) \rightarrow \sig$
    is monadic;
  \end{itemize}
  and so $\nerv = \cat{Mnd}_c(\E)$. This proves monadicity of $V$ in
  the special case $\E = [\A^\mathrm{op}, \V]$, whence also, by the
  preceding argument, in the general case.

  In order to prove (ii), we let $\C_1$ be the colimit-closure in
  $\mnd$ of the image of $F$. Since $\nerv$ contains this image and is
  colimit-closed, we have $\C_1 \subseteq \nerv \subseteq \mnd$. Thus,
  applying Lemma~\ref{lem:7} to this triple and
  $V \colon \nerv \rightarrow \sig$ gives $\nerv = \C_1$ as desired.

  Finally we prove (iii). The monadicity of $V$ above implies that of
  $P$ and hence also of $H$ (by taking $\E = [\A^\mathrm{op}, \V])$.
  Since filtered colimits of $\A$-pretheories can be computed at the
  level of underlying \emph{graphs}, the forgetful $H$ preserves them;
  which is to say that $\preth$ is \emph{finitarily} monadic over the
  locally presentable
  $\V\text-\cat{CAT}(\ob \A, [\A^\mathrm{op}, \V])$, whence locally
  presentable by~\cite[Satz~10.3]{Gabriel1971Lokal}. So in the
  right-hand and the large bipullback squares in~\eqref{eq:17}, the
  bottom and right sides are right adjoints between locally presentable
  categories. Since by~\cite[Theorem~2.18]{Bird1984Limits}, the
  $2$-category of locally presentable categories and right adjoint
  functors is closed under bilimits in $\CAT$, we conclude that each
  $\th$ and each $\nerv$ is also locally presentable.
\end{proof}

\subsection{Saturated classes}
\label{sec:saturated-classes}

We now turn to the deferred proof of Theorem~\ref{thm:5}. Recall the
context: an endo-$\V$-functor $F \colon \E \rightarrow \E$ is called
\emph{$\A$-\induced} when the pointwise left Kan extension of its
restriction along $K$, and $\A$ is a \emph{saturated class of arities}
if $\A$-\induced endofunctors of $\E$ are composition-closed.

We begin by recording the basic properties of this situation. We write
$\aend$ and $\amnd$ for the full subcategories of
$\eend = \V\text-\cat{CAT}(\E, \E)$ and $\mnd$ on, respectively, the
$\A$-\induced endofunctors, and the monads with $\A$-\induced
underlying endofunctor.

\begin{Lemma}
  \label{lem:8}
  $\aend$ is coreflective in $\eend = \V\text-\cat{CAT}(\E, \E)$ via
  the coreflector $R(F) = \mathrm{Lan}_K(FK)$, as on the left in:
  \begin{equation}
    \label{eq:32}
    \cd{
      {\aend} \ar@<-4.5pt>[r]_-{I} \ar@{}[r]|-{\top} &
      {\eend} \ar@<-4.5pt>[l]_-{R} & &
      {\amnd} \ar@<-4.5pt>[r]_-{I} \ar@{}[r]|-{\top} &
      {\mnd} \ar@<-4.5pt>[l]_-{R}\rlap{ .}
    }
  \end{equation}
  If $\A$ is a saturated class, then $\aend$ is right-closed monoidal,
  and the coreflection left above lifts to the corresponding categories
  of monads as on the right.
\end{Lemma}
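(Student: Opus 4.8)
The plan is to establish the three assertions in turn: coreflectivity unconditionally, then — assuming $\A$ saturated — the right-closed monoidal structure, and finally the lift to monads. Throughout I use that $\E$, being locally presentable, is complete and cocomplete, and that $K$, being dense, is in particular fully faithful.

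\emph{Coreflectivity.} Since $\E$ is cocomplete and $\A$ small, the pointwise left Kan extension along $K$ exists and gives an adjunction $\Lan_K \dashv (\thg)\circ K$ between $[\A,\E]$ and $\eend$; the largeness of $\eend$ causes no trouble here, as we only ever form hom-objects whose domain is $\A$-\induced. Full faithfulness of $K$ makes the unit of this adjunction invertible, so $\Lan_K$ is fully faithful with essential image exactly $\aend$ (the $\A$-\induced endofunctors being precisely those $F$ isomorphic to $\Lan_K(FK)$). I would then invoke the standard fact that, for an adjunction $L\dashv U$ with invertible unit, the full subcategory of objects at which the counit is invertible is coreflective with coreflector $LU$; here this yields exactly that $\aend$ is coreflective in $\eend$ with coreflector $R(F)=\Lan_K(FK)$ and counit the canonical comparison $\varepsilon_F\colon\Lan_K(FK)\to F$. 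Two by-products I would record for reuse: $\Lan_K$ restricts to a $\V$-equivalence $[\A,\E]\simeq\aend$, so $\aend$ is a locally presentable $\V$-category by~\cite{Kelly1982Structures}; and, $\Lan_K$ being cocontinuous, a pointwise colimit of $\A$-\induced endofunctors is again $\A$-\induced, so colimits in $\aend$ are computed pointwise.

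\emph{The closed monoidal structure.} Now assume $\A$ saturated. Then $\aend$ is closed under $\circ$ by definition of saturation, and contains $\id_\E$ because $\id_\E\cong\Lan_K K$ — this isomorphism being precisely the density of $K$. So $(\aend,\circ,\id_\E)$ is a $\V$-monoidal subcategory of $(\eend,\circ,\id_\E)$ with strictly monoidal inclusion $I$. For right-closedness I would show that, for each $\G\in\aend$, the $\V$-functor $(\thg)\circ\G\colon\aend\to\aend$ — well defined by saturation — has a right $\V$-adjoint: precomposition commutes with the pointwise colimits of $\aend$, so $(\thg)\circ\G$ is $\V$-cocontinuous between locally presentable $\V$-categories, and the enriched adjoint functor theorem supplies an internal hom $[\G,\thg]$ with $\aend(\F\circ\G,\H)\cong\aend(\F,[\G,\H])$ $\V$-naturally. (Equivalently one could take $[\G,\H]=R(\Ran_\G\H)$ when the right Kan extension exists, but the adjoint functor theorem sidesteps any size question.)

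\emph{The lift to monads.} The inclusion $I\colon\aend\to\eend$ is strictly monoidal and has the right adjoint $R$, so by doctrinal adjunction (the right adjoint of a strong monoidal functor is canonically lax monoidal) $R$ becomes lax monoidal and $I\dashv R$ a monoidal adjunction; such an adjunction lifts to an adjunction between categories of monoids. Monoids in $(\eend,\circ)$ are exactly $\V$-monads on $\E$, and monoids in $(\aend,\circ)$ are exactly the monads whose underlying endofunctor is $\A$-\induced — i.e.\ the objects of $\amnd$ (using again $\id_\E\in\aend$) — so the lifted adjunction is precisely the claimed coreflection of $\amnd$ in $\mnd$, with coreflector the lift of $R$ and compatible with the forgetful functors to $\eend$, as in the right-hand side of~\eqref{eq:32}. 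Concretely — and this is the version I would actually write out — given a $\V$-monad $\mathsf T$ one equips $R\mathsf T=\Lan_K(TK)$ with the unit $\id_\E\to R\mathsf T$ and multiplication $R\mathsf T\circ R\mathsf T\to R\mathsf T$ obtained by factoring $\eta$ and $\mu$ of $\mathsf T$ through $\varepsilon_{\mathsf T}$; this is legitimate because $\id_\E$, $R\mathsf T\circ R\mathsf T$ (and $\mathsf S\circ\mathsf S$ for any other $\A$-\induced monad $\mathsf S$, needed for the universal property) are $\A$-\induced by saturation, so that maps out of them into an $\A$-\induced target are detected by postcomposition with the coreflection counit; the monad axioms for $R\mathsf T$, the fact that $\varepsilon_{\mathsf T}$ is a monad map, and the couniversal property then all follow from this uniqueness of factorisations. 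The organising point is that \emph{saturation is exactly the hypothesis that the tensor square of an $\A$-\induced endofunctor stays $\A$-\induced}, which is what both the closed monoidal structure and the monad lift consume; I anticipate no serious obstacle, the one place genuinely demanding care being the verification that $\aend$ is locally presentable with colimits computed pointwise, since it is that which licenses the appeal to the enriched adjoint functor theorem.
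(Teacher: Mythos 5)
Your proposal is correct and follows essentially the same route as the paper: coreflectivity via the adjunction $\mathrm{Lan}_K \dashv (\thg)\circ K$ identifying $\aend$ with $\V\text-\cat{CAT}(\A,\E)$, right-closedness via cocontinuity of $(\thg)\circ F$ on the colimit-closed, locally presentable $\aend$, and the lift to monads via doctrinal adjunction applied to the strict monoidal inclusion. The extra detail you supply (the explicit role of $\id_\E\cong\mathrm{Lan}_K K$ and the concrete construction of the monad structure on $R\mathsf T$) is sound but not needed beyond what the paper's citation of Kelly's doctrinal adjunction already delivers.
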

\begin{proof}
  Restriction and left Kan extension along the fully faithful $K$
  exhibits $\aend$ as equivalent to $\V\text-\cat{CAT}(\A, \E)$,
  whence locally presentable. Since restriction along $K$ is a
  coreflector of $\eend$ into $\V\text-\cat{CAT}(\A, \E)$, it follows
  that $R(F) = \mathrm{Lan}_K(FK)$ is a coreflector of $\eend$ into
  $\aend$.

  If $\A$ is saturated then $\aend$ is monoidal under composition.
  Since each endofunctor $(\thg) \circ F$ of $\eend$ is cocontinuous,
  and $\aend$ is closed in $\eend$ under colimits, each endofunctor
  $(\thg) \circ F$ of $\aend$ is cocontinuous, and so has a right
  adjoint by local presentability. Thus $\aend$ is right-closed
  monoidal.

  Furthermore, the inclusion of $\aend$ into $\eend$ is strict
  monoidal, whence by~\cite[Theorem~1.5]{Kelly1974Doctrinal} the
  coreflection to the left of~\eqref{eq:32} lifts to a coreflection in
  the $2$-category $\cat{MONCAT}$ of monoidal categories, lax monoidal
  functors and monoidal transformations. Applying the $2$-functor
  $\cat{MONCAT}(1, \thg) \colon \cat{MONCAT} \rightarrow \cat{CAT}$
  yields the coreflection to the right of~\eqref{eq:32}.
\end{proof}

The key step towards establishing Theorem~\ref{thm:5} above is now:

\begin{Prop}
  \label{prop:3}
  The left adjoint $F$ of $V \colon \mnd \rightarrow \sig$ takes
  values in $\A$-\induced monads; furthermore, the restriction of $V$
  to $\amnd$ is monadic.
\end{Prop}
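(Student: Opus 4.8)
The statement has two parts: that the left adjoint $F$ takes values in $\amnd$, and that the restricted functor $V\colon\amnd\to\sig$ is monadic. The plan is to reduce the first claim to a free-monoid computation inside $\aend$, and the second to a factorisation of $V|_{\amnd}$ through $\aend$, with each factor handled by a standard monadicity theorem.

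For the first claim, recall from the proof of Proposition~\ref{prop:8} that $F\Sigma$ is the free monad on the endofunctor $F_2(\Sigma)=\sum_{a\in\A}\E(Ka,\thg)\cdot\Sigma a$. I would first note that $F_2(\Sigma)$ is $\A$-\induced: each summand $\E(Ka,\thg)\cdot c$ has restriction $a'\mapsto\A(a,a')\cdot c$ along $K$, whose pointwise left Kan extension along $K$ recovers $\E(Ka,\thg)\cdot c$ by the co-Yoneda lemma (equivalently, by full fidelity of $N_K$), and $\aend$ is closed under colimits in $\eend$ by Lemma~\ref{lem:8}, so the coproduct $F_2(\Sigma)$ also lies in $\aend$. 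Since $\A$ is saturated, $\aend$ is a locally presentable right-closed monoidal category (Lemma~\ref{lem:8}), hence the forgetful functor $U\colon\mathrm{Mon}(\aend)\to\aend$ has a left adjoint $\mathsf S$; and as $\aend\hookrightarrow\eend$ is full and strict monoidal, $\mathrm{Mon}(\aend)=\amnd$. It remains to identify $\mathsf SF_2(\Sigma)$ with $F\Sigma$. For $\mathsf P\in\amnd$ one has $\amnd(\mathsf SF_2(\Sigma),\mathsf P)\cong\aend(F_2(\Sigma),U\mathsf P)\cong\eend(F_2(\Sigma),U\mathsf P)\cong\sig(\Sigma,V\mathsf P)$, the last step by the defining property of $F_2$. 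Using the coreflection $I\dashv R\colon\mnd\to\amnd$ of Lemma~\ref{lem:8}, together with the isomorphism $V\cong V\circ I\circ R$ (which follows from $\Lan_K(PK)\circ K\cong PK$, i.e.\ from full fidelity of $K$), this extends to a natural isomorphism $\mnd(\mathsf SF_2(\Sigma),\mathsf P)\cong\sig(\Sigma,V\mathsf P)$ for \emph{all} $\mathsf P\in\mnd$. Thus $\mathsf SF_2(\Sigma)$ represents the same functor as $F\Sigma$, so $F\Sigma\cong\mathsf SF_2(\Sigma)\in\amnd$.

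For the monadicity of $V|_{\amnd}$, I would factor it as $\amnd=\mathrm{Mon}(\aend)\xrightarrow{U}\aend\xrightarrow{W}\sig$, where $W$ is the equivalence $\aend\simeq\VCat(\A,\E)$ followed by restriction along the identity-on-objects inclusion $\ob\A\hookrightarrow\A$. Each factor is monadic. The functor $W$ is monadic because restriction along an identity-on-objects $\V$-functor between small $\V$-categories has a left adjoint, is conservative, and creates all (co)limits; the functor $U$ is monadic because it has the left adjoint $\mathsf S$, is conservative, and creates $U$-absolute coequalisers by the usual argument for monoids (a split coequaliser is absolute, hence preserved by $\circ$ in each variable, so inherits a unique monoid structure). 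In particular $\amnd\simeq\aend^{US}$ for the monad $US$, which is accessible since every $\A$-\induced endofunctor is accessible ($\E$ being locally presentable and $\A$ small); so $\amnd$ is locally presentable. To conclude that the composite $W\circ U=V|_{\amnd}$ is monadic I would again appeal to Beck's theorem: $V|_{\amnd}$ has a left adjoint and is conservative, so it suffices to check that it creates coequalisers of $V|_{\amnd}$-split pairs. Given such a pair $(f,g)$ in $\amnd$, its image $(Uf,Ug)$ is a $W$-split pair, so $W$ creates a coequaliser $c$ of $(Uf,Ug)$ in $\aend$; one then argues that $c$ is $U$-absolute, whence $U$ creates a lift of it to $\amnd$ which is the required coequaliser, preserved and reflected by $V|_{\amnd}$.

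The step I expect to be the main obstacle is precisely this last one: showing that the coequaliser $c$ produced by $W$ is $U$-absolute --- equivalently, that the splitting data of a $V|_{\amnd}$-split pair can be transported through the non-full functor $W$ so as to split $(Uf,Ug)$ already in $\aend$. This is plausible because $W$ is a restriction functor between $\V$-functor categories and $\aend\simeq\VCat(\A,\E)$ is very concrete, but it requires a direct check; it cannot be obtained from a naive ``composites of monadic functors are monadic'' slogan, which fails in general, nor (it seems) from a crude-monadicity argument using reflexive pairs, since $\A$-\induced endofunctors need not preserve reflexive coequalisers. Once this is in place, the identification $\nerv=\amnd$ and the remaining clauses of Theorem~\ref{thm:5} follow formally by combining the present result with Proposition~\ref{prop:8}, the cancellation Lemma~\ref{lem:7}, and Proposition~\ref{prop:7}.
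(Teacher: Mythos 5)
Your argument for the first claim is correct, and it takes a genuinely different route from the paper's. The paper observes that $V\varepsilon\colon VIR\Rightarrow V$ is invertible (the counit $\Lan_K(TK)\to T$ restricts along the fully faithful $K$ to an isomorphism), deduces by adjointness that $\id_F$ factors through $\varepsilon_F$, and concludes that each $F\Sigma$ is a retract of $IRF\Sigma$ and hence lies in the colimit-closed (so retract-closed) $\amnd$. Your alternative --- build the free monoid on $F_2(\Sigma)$ inside $\aend$ and show by a representability computation that it coincides with $F\Sigma$ --- also works; the only soft spot is the assertion that local presentability plus right-closedness of $\aend$ already yields free monoids. Right-closedness only gives cocontinuity of $(\thg)\circ X$; for the free-monoid construction one also needs $X\circ(\thg)$ to be accessible, which does hold here because every $\A$-\induced endofunctor is a small colimit of the accessible functors $\E(Ka,\thg)\cdot c$, but this should be said.

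The step you flag as the main obstacle is a genuine gap in the proof as written, and it is exactly the point at which the paper reaches for an off-the-shelf tool: Theorem~2 of Lack's \emph{On the monadicity of finitary monads}, applied to the right-closed monoidal $\aend$, the monadic $V_2\colon\aend\to\sig$, and the action $\diamond\colon\aend\times\sig\to\sig$, $(F,G)\mapsto FG$. That theorem packages precisely the verification you defer. If you want to close it by hand along your own lines, the missing observation is this: colimits in $\aend\simeq\VCat(\A,\E)$ are computed pointwise over $\A$, and the coequaliser $q$ that $W$ creates from a $V|_{\amnd}$-split pair satisfies $Wq=$ the given splitting, so $q_{Ka}$ is a \emph{split} coequaliser in $\E$ for every $a\in\A$. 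Since any functor preserves split coequalisers, $(X\circ q)_{Ka}=X(q_{Ka})$ is again a split coequaliser for each $a$, and saturation guarantees that $X\circ(\thg)$ lands in $\aend$; hence $X\circ q$ is a coequaliser in $\aend$. On the other side $q\circ X$ is a coequaliser because precomposition is cocontinuous (right-closedness). Iterating gives the stability of $q$ under tensoring in each variable that the standard monoid-lifting argument needs, so $U$ creates the lift and $V|_{\amnd}$ creates coequalisers of $V|_{\amnd}$-split pairs. With that paragraph added your proof is complete; without it, the pivotal step of the second claim is only asserted.
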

\begin{proof}
  For any $\mathsf{T} \in \mnd$, its $\A$-\induced coreflection
  $\varepsilon_\mathsf{T} \colon IR(\mathsf{T}) \rightarrow
  \mathsf{T}$ has as underlying map in $\eend$ the component
  $\mathrm{Lan}_K(TK) \rightarrow T$ of the counit of the adjunction
  given by restriction and left Kan extension along $K$. Since $K$ is
  fully faithful, the restriction of this map along $K$ is invertible,
  whence in particular,
  $V \varepsilon \colon VIR \Rightarrow V \colon \mnd \rightarrow
  \sig$ is invertible. So $\eta \colon \id \Rightarrow VF$ factors
  through $V \varepsilon_F \colon VIRF \Rightarrow VF$ whence, by
  adjointness, $\id \colon F \Rightarrow F$ factors through
  $\varepsilon_F$. Therefore each $F(\Sigma)$ is a retract of
  $IRF(\Sigma)$; since $\amnd$ is closed under colimits in $\mnd$, it
  is retract-closed and so each $F(\Sigma)$ belongs to $\amnd$.

  It remains to prove that the restriction of $V$ to $\amnd$ is
  monadic. To do so, we decompose this restriction as
  \begin{equation*}
    \A\text-\mnd \xrightarrow{V_1}
    \A\text-\cat{End}(\E) \xrightarrow{V_2} \sig\rlap{ ,}
  \end{equation*}
  where $V_1$ forgets the monad structure and $V_2$ is given by
  precomposition with $\ob \A \rightarrow \A \to \E$, and apply the
  following result, which
  is~\cite[Theorem~2]{Lack1999On-the-monadicity}:

  \begin{Thm*}
    Let $\M$ be a right-closed monoidal category, and
    $V_2 \colon \M \rightarrow \N$ a monadic functor for which there
    exists a functor
    $\mathord\diamond \colon \M \times \N \rightarrow \N$ with natural
    isomorphisms $X \diamond VY \cong V(X \otimes Y)$. If the forgetful
    functor $V_1 \colon \cat{Mon}(\M) \rightarrow \M$ has a left
    adjoint, then the composite
    $V_2V_1 \colon \cat{Mon}(\M) \rightarrow \N$ is monadic.
  \end{Thm*}

  Indeed, by Lemma~\ref{lem:8}, $\aend$ is a right-closed monoidal
  category, and $\amnd$ the category of monoids therein. Under the
  equivalence $\aend \simeq \V\text-\cat{CAT}(\A, \E)$, we may identify
  $V_2$ with precomposition along $\ob \A \rightarrow \A$. It is thus
  cocontinuous, and has a left adjoint given by left Kan extension;
  whence is monadic. Now since $V_2V_1$ has a left adjoint and $V_2$ is
  monadic, it follows that $V_1$ also has a left adjoint. Finally, we
  have a functor
  \begin{equation*}
    \diamond \colon \aend \times
    \sig \rightarrow
    \sig
  \end{equation*}
  defined by $(F,G) \mapsto FG$, and this clearly has the property that
  $M(FG) = F \diamond M(G)$. So applying the above theorem yields the
  desired monadicity.
\end{proof}

We are now ready to prove:

\thmfive*

\begin{proof}
  For (i) $\Leftrightarrow$ (ii), the monadicity of
  $V \colon \amnd \rightarrow \sig$ verified in the previous
  proposition implies, as in the proof of Theorem~\ref{thm:7}(iii),
  that $\amnd$ is the colimit-closure in $\mnd$ of the free monads on
  signatures. Since $\nerv$ is also this closure, we have
  $\nerv = \amnd$ as desired. For (ii)~$\Leftrightarrow$~(iii), we
  apply Proposition~\ref{prop:7}.
\end{proof}

\bibliographystyle{acm}
\bibliography{bibdata}

\begin{thebibliography}{10}

\bibitem{Adamek1994Locally}
{\sc Ad{\'a}mek, J., and Rosick{\'y}, J.}
\newblock {\em Locally presentable and accessible categories}, vol.~189 of {\em
  London Mathematical Society Lecture Note Series}.
\newblock Cambridge University Press, 1994.

\bibitem{Ara2010Sur-les-infty-groupoides}
{\sc Ara, D.}
\newblock {\em Sur les {$\infty$}-groupo{\"\i}des de {G}rothendieck et une
  variante {$\infty$}-cat{\'e}gorique}.
\newblock PhD thesis, Universit{\'e} Paris VII, 2010.

\bibitem{Ara2013On-the-homotopy}
{\sc Ara, D.}
\newblock On the homotopy theory of {G}rothendieck {$\infty$}-groupoids.
\newblock {\em Journal of Pure and Applied Algebra 217\/} (2013), 1237--1278.

\bibitem{Avery2017Structure}
{\sc Avery, T.}
\newblock {\em Structure and semantics}.
\newblock PhD thesis, University of Edinburgh, 2017.

\bibitem{Barr1970Coequalizers}
{\sc Barr, M.}
\newblock Coequalizers and free triples.
\newblock {\em Mathematische Zeitschrift 116\/} (1970), 307--322.

\bibitem{Barr1985Toposes}
{\sc Barr, M., and Wells, C.}
\newblock {\em Toposes, triples and theories}, vol.~278 of {\em Grundlehren der
  Mathematischen Wissenschaften}.
\newblock Springer, 1985.

\bibitem{Batanin1998Monoidal}
{\sc Batanin, M.}
\newblock Monoidal globular categories as a natural environment for the theory
  of weak {$n$}-categories.
\newblock {\em Advances in Mathematics 136\/} (1998), 39--103.

\bibitem{Berger2002A-Cellular}
{\sc Berger, C.}
\newblock A cellular nerve for higher categories.
\newblock {\em Advances in Mathematics 169\/} (2002), 118--175.

\bibitem{Berger2012Monads}
{\sc Berger, C., Melli{\`e}s, P.-A., and Weber, M.}
\newblock Monads with arities and their associated theories.
\newblock {\em Journal of Pure and Applied Algebra 216\/} (2012), 2029--2048.

\bibitem{Bird1984Limits}
{\sc Bird, G.}
\newblock {\em Limits in 2-categories of locally-presented categories}.
\newblock PhD thesis, University of Sydney, 1984.

\bibitem{Day1974On-closed}
{\sc Day, B.}
\newblock On closed categories of functors {II}.
\newblock In {\em Category Seminar (Sydney, 1972/1973)}, vol.~420 of {\em
  Lecture Notes in Mathematics}. Springer, 1974, pp.~20--54.

\bibitem{Dubuc1970Kan-extensions}
{\sc Dubuc, E.~J.}
\newblock {\em Kan extensions in enriched category theory}, vol.~145 of {\em
  Lecture Notes in Mathematics}.
\newblock Springer, 1970.

\bibitem{Gabriel1971Lokal}
{\sc Gabriel, P., and Ulmer, F.}
\newblock {\em Lokal pr{\"a}sentierbare {K}ategorien}, vol.~221 of {\em Lecture
  Notes in Mathematics}.
\newblock Springer, 1971.

\bibitem{Joyal1993Pullbacks}
{\sc Joyal, A., and Street, R.}
\newblock Pullbacks equivalent to pseudopullbacks.
\newblock {\em Cahiers de Topologie et Geom{\'e}trie Diff{\'e}rentielle
  Cat{\'e}goriques 34\/} (1993), 153--156.

\bibitem{Kelly1974Doctrinal}
{\sc Kelly, G.~M.}
\newblock Doctrinal adjunction.
\newblock In {\em Category Seminar (Sydney, 1972/1973)}, vol.~420 of {\em
  Lecture Notes in Mathematics}. Springer, 1974, pp.~257--280.

\bibitem{Kelly1980A-unified}
{\sc Kelly, G.~M.}
\newblock A unified treatment of transfinite constructions for free algebras,
  free monoids, colimits, associated sheaves, and so on.
\newblock {\em Bulletin of the Australian Mathematical Society 22\/} (1980),
  1--83.

\bibitem{Kelly1982Basic}
{\sc Kelly, G.~M.}
\newblock {\em Basic concepts of enriched category theory}, vol.~64 of {\em
  London Mathematical Society Lecture Note Series}.
\newblock Cambridge University Press, 1982.
\newblock Republished as: \textit{Reprints in Theory and Applications of
  Categories 10} (2005).

\bibitem{Kelly1982Structures}
{\sc Kelly, G.~M.}
\newblock Structures defined by finite limits in the enriched context {I}.
\newblock {\em Cahiers de Topologie et Geom\'{e}trie Diff\'{e}rentielle
  Cat\'{e}goriques 23\/} (1982), 3--42.

\bibitem{Kelly1993Finite-product-preserving}
{\sc Kelly, G.~M., and Lack, S.}
\newblock Finite-product-preserving functors, {K}an extensions and
  strongly-finitary {$2$}-monads.
\newblock {\em Applied Categorical Structures 1\/} (1993), 85--94.

\bibitem{Kelly1993Adjunctions}
{\sc Kelly, G.~M., and Power, A.~J.}
\newblock Adjunctions whose counits are coequalizers, and presentations of
  finitary enriched monads.
\newblock {\em Journal of Pure and Applied Algebra 89\/} (1993), 163--179.

\bibitem{Lack1999On-the-monadicity}
{\sc Lack, S.}
\newblock On the monadicity of finitary monads.
\newblock {\em Journal of Pure and Applied Algebra 140\/} (1999), 65--73.

\bibitem{Lack2009Gabriel-Ulmer}
{\sc Lack, S., and Power, J.}
\newblock Gabriel-{U}lmer duality and {L}awvere theories enriched over a
  general base.
\newblock {\em Journal of Functional Programming 19\/} (2009), 265--286.

\bibitem{Lack2011Notions}
{\sc Lack, S., and Rosick{\'y}, J.}
\newblock Notions of {L}awvere theory.
\newblock {\em Applied Categorical Structures 19\/} (2011), 363--391.

\bibitem{Lawvere1963Functorial}
{\sc Lawvere, F.~W.}
\newblock {\em Functorial semantics of algebraic theories}.
\newblock PhD thesis, Columbia University, 1963.
\newblock Also \emph{Proc. Nat. Acad. Sci. U.S.A. 50} (1963), 869--872.
  Republished as: \textit{Reprints in Theory and Applications of Categories 5}
  (2004).

\bibitem{Leinster2004Operads}
{\sc Leinster, T.}
\newblock Operads in higher-dimensional category theory.
\newblock {\em Theory and Applications of Categories 12\/} (2004), 73--194.

\bibitem{Linton1966Some}
{\sc Linton, F. E.~J.}
\newblock Some aspects of equational categories.
\newblock In {\em Conference on Categorical Algebra (La Jolla, 1965)}.
  Springer, 1966, pp.~84--94.

\bibitem{Maltsiniotis2010Grothendieck}
{\sc Maltsiniotis, G.}
\newblock Grothendieck {$\infty$}-groupoids, and still another definition of
  {$\infty$}-categories.
\newblock Unpublished, available as \url{https://arxiv.org/abs/1009.2331},
  2010.

\bibitem{Markowsky1976Chain-complete}
{\sc Markowsky, G.}
\newblock Chain-complete posets and directed sets with applications.
\newblock {\em Algebra Universalis 6}, 1 (1976), 53--68.

\bibitem{Mellies2010Segal}
{\sc Melli{{\`e}}s, P.-A.}
\newblock Segal condition meets computational effects.
\newblock In {\em 25th {A}nnual {IEEE} {S}ymposium on {L}ogic in {C}omputer
  {S}cience {LICS} 2010}. IEEE Computer Society Press, 2010, pp.~150--159.

\bibitem{Meyer1975Induced}
{\sc Meyer, J.-P.}
\newblock Induced functors on categories of algebras.
\newblock {\em Mathematische Zeitschrift 142\/} (1975), 1--14.

\bibitem{Nishizawa2009Lawvere}
{\sc Nishizawa, K., and Power, J.}
\newblock Lawvere theories enriched over a general base.
\newblock {\em Journal of Pure and Applied Algebra 213\/} (2009), 377--386.

\bibitem{Power1999Enriched}
{\sc Power, J.}
\newblock Enriched {L}awvere theories.
\newblock {\em Theory and Applications of Categories 6\/} (1999), 83--93.

\bibitem{Segal1968Classifying}
{\sc Segal, G.}
\newblock Classifying spaces and spectral sequences.
\newblock {\em Institut des Hautes {\'E}tudes Scientifiques. Publications
  Math{\'e}matiques 34\/} (1968), 105--112.

\bibitem{Street2000The-petit}
{\sc Street, R.}
\newblock The petit topos of globular sets.
\newblock {\em Journal of Pure and Applied Algebra 154\/} (2000), 299--315.

\bibitem{Weber2007Familial}
{\sc Weber, M.}
\newblock Familial 2-functors and parametric right adjoints.
\newblock {\em Theory and Applications of Categories 18\/} (2007), 665--732.

\end{thebibliography}

\end{document}